\documentclass[a4paper,11pt]{article}
\usepackage{amsfonts}
\usepackage{amssymb}
\usepackage{amsmath}
\usepackage{amsthm}
\usepackage{bm}
\usepackage{here}
\usepackage{tikz}
\usetikzlibrary{intersections,calc,arrows}
\usepackage{color}
\usepackage{cases}
\numberwithin{equation}{section}
\usepackage{leftidx}
\usepackage{blkarray}

\theoremstyle{definition}
\newtheorem{defn}{Definition}[section]
\newtheorem{rem}[defn]{Remark}
\newtheorem{lemm}[defn]{Lemma}
\newtheorem{prop}[defn]{Proposition}
\newtheorem{thm}[defn]{Theorem}

\newtheorem{exa}[defn]{Example}
\newtheorem{conj}[defn]{Conjecture}

\pagestyle{plain}

\setlength{\textheight}{\paperheight}
\setlength{\topmargin}{4.6truemm}
\addtolength{\topmargin}{-\headsep}
\addtolength{\textheight}{-60truemm}
\setlength{\textwidth}{\paperwidth}
\setlength{\oddsidemargin}{-0.4truemm}
\setlength{\evensidemargin}{-0.4truemm}
\addtolength{\textwidth}{-50truemm}
\title{A $3\times3$ linear $q$-difference system with $E_8^{(1)}$-symmetry}
\author{Takahiko Nobukawa
\footnote{Department of Education, Kogakkan University, Japan.
	E-mail: t-nobukawa@kogakkan-u.ac.jp
	\\\ \\
	keywords:
	affine Weyl group, $q$-difference system, $q$-middle convolution.
	MSC2020: 39A13.}
}
\date{}
\allowdisplaybreaks

\begin{document}
	\maketitle
	\begin{center}
		\it
		Dedicated to Professor Yasuhiko Yamada on his 65th birthday
	\end{center}
	\begin{abstract}
		We present a linear $q$-difference equation of rank $3$, which admits the  affine Weyl group symmetry of type $E_8^{(1)}$.
		We further compare this equation with Moriyama--Yamada's quantum curve which has $W(E_8^{(1)})$-symmetry.
		The symmetry of our equation is provided by the $q$-middle convolution, defined by Sakai--Yamaguchi and reformulated by Arai--Takemura.
		In this paper, we provide a reconstruction of the $q$-middle convolution via a $q$-Okubo type equation.
	\end{abstract}
	\section{Introduction}
	Discrete Painlev\'e equations arise as integrable dynamical systems admitting rich symmetry structures governed by affine Weyl groups. 
	Since Sakai's geometric classification \cite{Sakai}, these equations have been understood as discrete analogues of isomonodromic deformations, and they play a central role in the theory of integrable systems.
	The classification is summarized in the following diagram:
	\begin{align*}
		\arraycolsep=1pt
		\begin{array}{lcccccccccccccccccc}
			{\rm elliptic}\quad&E^{(1)}_8\\[-2mm]
			&&&&&&&&&&&&&&&{\mathbb Z}\\
			&&&&&&&&&&&&&&\nearrow\\
			{\rm multiplicative}\quad&E^{(1)}_8&\rightarrow&E^{(1)}_7&\rightarrow&E^{(1)}_6
			&\rightarrow &D^{(1)}_5&\rightarrow&A^{(1)}_4
			&\rightarrow&(A_2\!+\!A_1)^{(1)}&\rightarrow&(A_1\!+\!A_1)^{(1)}
			&\rightarrow&A^{(1)}_1&\rightarrow&{\mathcal D}_6\\[2mm]
			\\
			{\rm additive}\quad&E^{(1)}_8&\rightarrow&E^{(1)}_7&\rightarrow&E^{(1)}_6
			&&\rightarrow&&D^{(1)}_4&\rightarrow&A_3^{(1)}&\rightarrow
			&(A_1\!+\!A_1)^{(1)}&\rightarrow&A^{(1)}_1&\rightarrow&{{\mathbb Z}_2}\\
			&&&&&&&&&&&&\searrow&&\searrow&&&\downarrow\\
			&&&&&&&&&&&&&A^{(1)}_2&\rightarrow&A^{(1)}_1&\rightarrow&1\\
		\end{array}
	\end{align*}
	In Sakai's framework, constructing a birational representation of the affine Weyl group is essential.
	We focus on the $q$-difference case of type $E_8^{(1)}$ in this paper.
	
	In the differential and difference settings, the correspondence between Painlev\'e equations and associated linear problems has been extensively studied (see \cite{KNY} and references therein for instance). 
	In particular, linear differential or difference equations equipped with isomonodromic deformation structures provide a conceptual framework connecting integrable dynamics, representation theory, and mathematical physics. 
	
	The middle convolution, introduced by Katz \cite{Katz} and developed by Dettweiler--Reiter \cite{DR1,DR2}, is a powerful tool to study the theory of linear differential equations.
	The affine Weyl group symmetry of type $D_4^{(1)}$ for the sixth Painlev\'e equation is realized in \cite{Filipuk,Takemura} by applying the middle convolution to the corresponding Lax linear equation (see also \cite{HF}).
	A $q$-analog of the result, i.e. the affine Weyl group symmetry of type $D_5^{(1)}$ via the $q$-middle convolution $mc_\lambda$ \cite{SY}, is studied in \cite{SST}.
	Since $mc_\lambda\circ mc_\mu=mc_{\lambda+\mu}$ holds, and $mc_0=\mathrm{id}$ up to the gauge transform of $GL$, the ($q$-)middle convolution gives a birational transformation for components of the coefficient matrix of equation.
	The ($q$-)middle convolution is useful for giving a birational representation of the affine Weyl group.
	
	A Lax equation for the $q$-Painlev\'e equation of type $E_8^{(1)}$ is provided by Yamada \cite{Yamada}.
	This linear equation is a rank $2$ scalar $q$-difference equation.
	In this paper, we introduce a new rank $3$ linear $q$-difference equation which admits the affine Weyl group symmetry of type $E_8^{(1)}$, given in \eqref{defeq} below.
	The main results are Theorem \ref{E8mc}, which provides a transformation for parameters of the equation \eqref{defeq} by using the $q$-middle convolution.
	We also reconstruct the $q$-middle convolution via a $q$-Okubo type equation.
	Most of formulation is the same as Arai--Takemura's $q$-middle convolution \cite{AT}.
	Only the treatment of the origin, which is important to prove the above main result, is different.
	Properties of the $q$-middle convolution are discussed in Section \ref{secqmc}.
	
	We mention a consideration to get \eqref{defeq} in this paragraph.
	The sixth differential Painlev\'e equation arises as an isomonodromic deformation of a second order Fuchsian differential equation with $4$ singularities on $\mathbb{P}^1=\mathbb{C}\cup\{\infty\}$.
	In the differential case, the position of singularities can be fixed or transformed by M\"obius transformations.
	In other words, every singularities can be treated equally for differential equations.
	The word ``democratic'' is used to express this treatment in \cite{Yoshida}.
	On the other hand, we can not apply general M\"obius transformations to $q$-difference equations because the points $x=0$, $x=\infty$ are fixed points on the $q$-shift operator $T_x:x\mapsto qx$.
	Only the scaling $x\to cx$ and the inversion $x\mapsto x^{-1}$ can be applied to $q$-difference equations.
	Therefore there are several variations for $q$-analog of the above Fuchsian differential equation.
	More precisely, $q$-analogs of second order Fuchsian differential equations with following $4$ singularities are not equivalent:
	\begin{align*}
		&\{t_1,t_2,t_3,t_4\}
		\ \mbox{(``democratic'')},\quad
		\{t_1,t_2,t_3,0\},\quad
		\{t_1,t_2,0,\infty\}.
	\end{align*}
	The $q$-Painlev\'e equation of type $E_7^{(1)}$ arises as a deformation of the first case of the above $q$-difference equation, although the third case corresponds to the $q$-Painlev\'e equation of type $D_5^{(1)}$.
	See \cite{Yamada} for the Lax formalism of $E_7^{(1)}$ case, \cite{Murata,SakaiE6,Yamada} for it of $E_6^{(1)}$ case (the second case of above equations) and \cite{JS,Yamada} for it of $D_5^{(1)}$ case.
	It is found from these results that considering ``democratic'' $q$-difference equation is nice for giving equations with higher symmetry.
	It was found in \cite{Boalch} that the additive Painlev\'e equations of type $E_6^{(1)}$, $E_7^{(1)}$ and $E_8^{(1)}$ arise as Schlesinger transformations of Fuchsian differential equation of rank $3$, $4$ and $6$, respectively.
	The spectral type of these Fuchsian differential equation is given by
	\begin{align}
		E_6^{(1)}:111;111;111,\
		E_7^{(1)}:22;1111;1111,\
		E_8^{(1)}:33;222;111111.
	\end{align}
	Thus it is natural to consider a $q$-analog of the rank $3$ equation with $3$ singularities by obtaining a Lax pair of the $q$-Painlev\'e equation ot type $E_6^{(1)}$.
	Due to the above discussion, a democratic $q$-analog of the rank $3$ Fuchsian differential equation may admit higher symmetry.
	We get the equation \eqref{defeq} from this consideration.
	We remark that the Lax linear equation given by Park \cite{Park} is a $q$-analog the above rank $3$ equation related to $E_6^{(1)}$.
	
	In \cite{MY}, quantum representation of the affine Weyl group of type $E_8^{(1)}$ are mainly discussed by Moriyama--Yamada.
	This representation is obtained by considering birational transformation of a quantum curve (see also \cite{Moriyama} for properties of this curve).
	The quantum $q$-difference Painlev\'e equation is studied as an application.
	In this paper we compare our equation \eqref{defeq} with this curve.
	The reduction of our equation \eqref{defeq} to a scalar equation can be regarded as a non-autonomous version of the curve (see Proposition \ref{propsc}).
	Thus our results would be applied to some quantum integrable systems including the quantum Painlev\'e equation.

	The contents of this paper are as follows.
	In Section \ref{secnot}, we give notation used in this paper.
	In Section \ref{secqmc}, we study the $q$-middle convolution via the $q$-Okubo type equation \eqref{Okuboeq}.
	In Section \ref{secE8}, we introduce a rank $3$ equation and show that this equation has $W(E_8^{(1)})$-symmetry.
	In Section \ref{secsc}, we discuss a relation between our equation and Moriyama-Yamada's quantum curve.
	In Section \ref{secsumm}, we summarize this paper and discuss related topics.
	Proofs of several results are written in Appendices for readability.
	
	\section{Notation}\label{secnot}
	In this section, we give notation used throughout this paper, and show several properties.
	The properties are mainly used  in Section \ref{secqmc}.
	We fix $q\in\mathbb{C}$ with $0<|q|<1$.
	We put
	\begin{align}
		&T_x:f(x)\mapsto f(qx),\\
		&D_x=\frac{1}{(1-q)x}(1-T_x):f(x)\mapsto\frac{f(x)-f(qx)}{x-qx},\\
		&(x)_\infty=\prod_{k=0}^\infty (1-xq^k),\\
		&\int_{[0,\tau]}f(t)d_qt=\int_0^\tau f(t)d_qt=(1-q) \sum_{n=0}^\infty f(\tau q^n)\tau q^n,\\
		&\int_{[0,\tau\infty]} f(t) d_qt=\int_0^{\tau\infty} f(t)d_qt=(1-q) \sum_{n=-\infty}^\infty f(\tau q^n)\tau q^n,\\
		&\int_{[\tau_1,\tau_2]}f(t)d_qt=\int_{\tau_1}^{\tau_2} f(t)d_qt=\int_{[0,\tau_2]}f(t)d_qt-\int_{[0,\tau_1]}f(t)d_qt,
		\notag\\
		&\qquad\qquad\qquad\qquad\qquad\qquad\qquad
		(\tau_1,\tau_2\in\mathbb{C}\cup\mathbb{C}\infty)\\
		&\int_{a_1C_1+\cdots+a_mC_m}f(t)d_qt=a_1\int_{C_1}f(t)d_qt+\cdots+a_m\int_{C_m}f(t)d_qt,
	\end{align}
	where each $C_i$ is in the form of an interval $[\tau_1,\tau_2]$.
	In this paper we call $C$ a path (of Jackson integral) if $C$ is a linear combination of intervals.
	
	The following properties are frequently used in this paper.
	\begin{lemm}
		The $q$-Leibniz rule holds:
		\begin{align}\label{qLeib}
			D_x(f(x)g(x))=f(x)D_x(g(x))+D_x(f(x))g(qx)=f(qx)D_x(g(x))+D_x(f(x))g(x).
		\end{align}
	\end{lemm}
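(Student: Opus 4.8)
The statement to prove is the $q$-Leibniz rule \eqref{qLeib}. The plan is to unwind the definition of $D_x$ and expand the product $f(x)g(x)$ directly, then regroup the four resulting terms in the two symmetric ways.

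First I would write $D_x(f(x)g(x)) = \dfrac{f(x)g(x) - f(qx)g(qx)}{(1-q)x}$ straight from the definition of $D_x$ given in the notation section. The key algebraic trick is the ``add and subtract'' device: insert a cross term. For the first asserted identity, write $f(x)g(x) - f(qx)g(qx) = f(x)\bigl(g(x) - g(qx)\bigr) + \bigl(f(x) - f(qx)\bigr)g(qx)$; dividing through by $(1-q)x$ and recognizing $\dfrac{g(x)-g(qx)}{(1-q)x} = D_x(g(x))$ and $\dfrac{f(x)-f(qx)}{(1-q)x} = D_x(f(x))$ yields $f(x)D_x(g(x)) + D_x(f(x))g(qx)$. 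For the second asserted identity, I would instead split as $f(x)g(x) - f(qx)g(qx) = f(qx)\bigl(g(x)-g(qx)\bigr) + \bigl(f(x)-f(qx)\bigr)g(x)$, which gives $f(qx)D_x(g(x)) + D_x(f(x))g(x)$ after dividing by $(1-q)x$.

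There is essentially no obstacle here: the proof is a one-line computation repeated with the two natural choices of which factor to evaluate at $x$ versus $qx$ in the cross term. The only point worth a sentence of care is the implicit assumption $x \neq 0$ so that division by $(1-q)x$ is legitimate (and $q \neq 1$, which holds since $0 < |q| < 1$); on formal power series or as an identity of operators one simply clears the denominator and checks $(1-T_x)(fg) = f\cdot(1-T_x)g + \bigl((1-T_x)f\bigr)\cdot T_x g = (T_x f)\cdot(1-T_x)g + \bigl((1-T_x)f\bigr)\cdot g$, which is immediate from $T_x(fg) = (T_x f)(T_x g)$. I would present the proof in this compact operator form or the equivalent fraction form, taking at most two or three displayed lines.
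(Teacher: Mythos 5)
Your proof is correct and is exactly the direct calculation the paper invokes (the paper's proof is simply ``verified easily by direct calculation''), carried out by the standard add-and-subtract of a cross term in the two symmetric ways. Nothing is missing; the remark about $x\neq 0$ versus the operator form $(1-T_x)(fg)$ is a fine touch but not required.
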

	\begin{proof}
		This can be verified easily by direct calculation.
	\end{proof}
	\begin{lemm}
		For a path $C$, we have
		\begin{align}
			\label{JacksonD}&\int_C D_t f(t)d qt=[f(t)]_{t\in \partial C},\\
			\label{JacksonT}&\int_C T_tf(t)d_qt=q^{-1}\int_C f(t)d_qt-\frac{1-q}{q}[tf(t)]_{t\in\partial C},
		\end{align}
		if the integral converges, where $\displaystyle f(\tau\infty)=\lim_{n\to-\infty}f(\tau q^n)$ and
		\begin{align}
			&[f(t)]_{t\in \partial [\tau_1,\tau_2]}=f(\tau_2)-f(\tau_1),\\
			&[f(t)]_{t\in \partial (a_1C_1+\cdots+a_mC_m)}=a_1[f(t)]_{t\in\partial C_1}+\cdots +a_m[f(t)]_{t\in \partial C_m}.
		\end{align}
	\end{lemm}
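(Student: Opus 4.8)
The plan is to reduce everything to two model intervals and then compute Jackson series directly. Both sides of \eqref{JacksonD} and \eqref{JacksonT} are linear in the path $C$ --- for the boundary terms this is the definition of $[\,\cdot\,]_{t\in\partial(a_1C_1+\cdots+a_mC_m)}$ --- so it suffices to treat $C=[\tau_1,\tau_2]$; and using $\int_{[\tau_1,\tau_2]}=\int_{[0,\tau_2]}-\int_{[0,\tau_1]}$ together with the additivity $[f(t)]_{t\in\partial[\tau_1,\tau_2]}=f(\tau_2)-f(\tau_1)$, it is enough to verify the two formulas for $C=[0,\tau]$ and $C=[0,\tau\infty]$. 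Throughout I set $f(0)=\lim_{n\to\infty}f(\tau q^n)$; the existence of this limit, like that of $f(\tau\infty)=\lim_{n\to-\infty}f(\tau q^n)$, will be a consequence of the assumed convergence of the relevant Jackson integral.

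For \eqref{JacksonD} I would just expand the definitions. Since $t\,(D_tf)(t)=\frac{f(t)-f(qt)}{1-q}$, we get
\[
\int_0^\tau D_tf(t)\,d_qt=(1-q)\sum_{n=0}^\infty (D_tf)(\tau q^n)\,\tau q^n=\sum_{n=0}^\infty\bigl(f(\tau q^n)-f(\tau q^{n+1})\bigr)=f(\tau)-f(0),
\]
the last step being telescoping, and the hypothesis that the integral converges being precisely what forces $f(\tau q^n)\to f(0)$. The case $C=[0,\tau\infty]$ is identical with a bilateral telescoping sum, giving $f(\tau\infty)-f(0)$. Subtracting the two cases, as described above, yields \eqref{JacksonD} for a general path.

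For \eqref{JacksonT} there are two routes. The direct one is an index shift: for $C=[0,\tau]$,
\[
\int_0^\tau T_tf(t)\,d_qt=(1-q)\sum_{n=0}^\infty f(\tau q^{n+1})\,\tau q^n=q^{-1}(1-q)\sum_{m\ge 1} f(\tau q^m)\,\tau q^m=q^{-1}\Bigl(\int_0^\tau f(t)\,d_qt-(1-q)\,\tau f(\tau)\Bigr),
\]
and since $[t f(t)]_{t\in\partial[0,\tau]}=\tau f(\tau)$ (here $t f(t)\to 0$ as $t\to 0$, again by convergence) this is \eqref{JacksonT}; the case $[0,\tau\infty]$ is the same computation with a bilateral sum, where convergence forces both boundary values of $tf(t)$ to vanish. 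The slicker route, more in the spirit of the section, is to combine the operator identity $T_t=1-(1-q)\,t\,D_t$ with the $q$-Leibniz rule \eqref{qLeib} applied to the product $t\cdot f(t)$ (note $D_t(t)=1$), which gives $t\,D_tf(t)=D_t(tf(t))-T_tf(t)$; substituting this and using \eqref{JacksonD} to evaluate $\int_C D_t(tf(t))\,d_qt=[tf(t)]_{t\in\partial C}$ produces the linear relation $q\int_C T_tf\,d_qt=\int_C f\,d_qt-(1-q)\,[tf(t)]_{t\in\partial C}$, equivalent to \eqref{JacksonT}.

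The main thing to watch --- the only place a slip could occur --- is the bookkeeping at the boundary: after telescoping or shifting the index, one must check that the surviving terms are exactly $f(0),f(\tau),f(\tau\infty)$ (respectively $\tau f(\tau)$ and the analogous value at $\tau\infty$), that the convergence hypothesis is what makes $\lim_{n\to\infty}f(\tau q^n)$ and $\lim_{n\to\infty}\tau q^n f(\tau q^n)$ exist, and that the series rearrangements are legitimate. Once these limits are identified with the boundary symbols, everything else is routine.
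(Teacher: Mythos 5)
Your proposal is correct and follows essentially the same route as the paper: reduce to the intervals $[0,\tau]$ and $[0,\tau\infty]$ and prove \eqref{JacksonD} by telescoping, then obtain \eqref{JacksonT} by applying the $q$-Leibniz rule \eqref{qLeib} to $t f(t)$ and integrating via \eqref{JacksonD}. Your additional direct index-shift verification of \eqref{JacksonT} is a harmless alternative check, not a different method.
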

	\begin{proof}
		First we prove \eqref{JacksonD}.
		It is sufficient to prove
		\begin{align}\label{proofJacksonD}
			\int_{[\tau_1,\tau_2]}D_tf(t)d_qt=f(\tau_2)-f(\tau_1),
		\end{align}
		for $\tau_1,\tau_2\in\mathbb{C}\cup\mathbb{C}\infty$.
		For $\tau\in\mathbb{C}$, we have
		\begin{align}
			\int_0^\tau D_tf(t)d_qt&=\sum_{n=0}^\infty (f(\tau q^n)-f(\tau q^{n+1}))
			\notag\\
			&=\lim_{N\to\infty}\sum_{n=0}^N (f(\tau q^n)-f(\tau q^{n+1}))
			\notag\\
			&=\lim_{N\to\infty} f(\tau)-f(\tau q^{N+1})
			=f(\tau)-f(0)=[f(t)]_{t\in\partial [0,\tau]},
		\end{align}
		and
		\begin{align}
			\int_0^{\tau\infty} D_tf(t)d_qt&=\sum_{n=-\infty}^\infty (f(\tau q^n)-f(\tau q^{n+1}))
			\notag\\
			&=\lim_{M,N\to\infty}\sum_{n=-M}^N (f(\tau q^n)-f(\tau q^{n+1}))
			\notag\\
			&=\lim_{M,N\to\infty} f(\tau q^{-M})-f(\tau q^{N+1})
			=f(\tau\infty)-f(0)=[f(t)]_{t\in\partial [0,\tau\infty]}.
		\end{align}
		Therefore the equation \eqref{proofJacksonD} holds, in particular the proof of \eqref{JacksonD} is completed.
		Due to the $q$-Leibniz rule, we have
		\begin{align}
			D_t(tf(t))=qtD_tf(t)+f(t)=\frac{q}{1-q}(1-T_t)f(t)+f(t)=-\frac{q}{1-q}T_t f(t)+\frac{1}{1-q}f(t).
		\end{align}
		Integrating both sides, we have
		\begin{align}
			[tf(t)]_{t\in\partial C}=-\frac{q}{1-q}\int_C T_tf(t)d_qt+\frac{1}{1-q}\int_Cf(t)d_qt.
		\end{align}
		This completes the proof of \eqref{JacksonT}.
	\end{proof}
	\begin{rem}
		Some relations between the Jackson integral and the $q$-shift operator are written in  \cite{Kac}.
		For instance, the equation \eqref{JacksonD} is discussed in \cite[Corollary 20.1]{Kac}.
	\end{rem}
	
	\section{$q$-middle convolution}\label{secqmc}
	In this section, we reconstruct the $q$-middle convolution via a $q$-analog of Okubo type system.
	Most of the formulation coincides with that of Arai--Takemura \cite{AT}, which is a reformulate version of Sakai--Yamaguchi's $q$-middle convolution \cite{SY}.
	Our construction clarifies a condition of the integral path of the Euler type Jackson integral transformation.
	
	The middle convolution \cite{DR1,DR2,Katz} for Fuchsian differential equations is constructed by three steps:
	\begin{itemize}
		\item[1.] rewrite the equation as an Okubo type system $(xI-S)\dfrac{dy}{dx}=Ay$, where $S$ is a diagonal matrix and $A$ is a constant matrix,
		\item[2.] apply the Euler type integral transformation $y\mapsto\int y(t)(x-t)^\lambda dt$,
		\item[3.] project the system to a quotient space.
	\end{itemize}
	In the following we construct the $q$-middle convolution following this scheme.

	\begin{defn}
		A $q$-difference system is called $q$-Okubo type if it is given in the following form:
		\begin{align}\label{Okuboeq}
			(xI-S)D_xy=Ay,
		\end{align}
		where $I$ is the identity matrix, $S$ is a diagonal matrix and $A$ is a constant matrix.
	\end{defn}
	\begin{defn}
		We define the $q$-Euler type integral transformation $E_\lambda=E_{\lambda;C}$ as follows:
		\begin{align}
			E_{\lambda;C}[f](x)=x^\lambda \int_C f(t)\frac{(q^{1-\lambda}t/x)_\infty}{(qt/x)_\infty}d_qt.
		\end{align}
	\end{defn}
	\begin{rem}
		Due to the $q$-binomial theorem, we have
		\begin{align}
			x^\lambda \frac{(q^{1-\lambda}t/x)_\infty}{(qt/x)_\infty}\xrightarrow{q\to1}x^\lambda (1-t/x)^\lambda=(x-t)^\lambda.
		\end{align}
		Therefore the transformation $E_{\lambda;C}$ is a $q$-analog of Euler type integral transformation
		\begin{align}
			f(t)\mapsto \int_C f(t)(x-t)^\lambda dt.
		\end{align}
	\end{rem}
	We show some properties of the $q$-Euler type integral transformation in the following.
	First we derive two commuting relations between $E_\lambda$ and difference operators.
	\begin{lemm}
		We have
		\begin{align}
			\label{EulerT}&E_{\lambda;C}[T_x f](x)=q^{-\lambda-1}T_xE_{\lambda;C} [f](x)-x^\lambda \frac{1-q}{q}\left[t\frac{1-q^{-\lambda}t/x}{1-t/x}f(t)\frac{(q^{1-\lambda}t/x)_\infty}{(qt/x)_\infty}\right]_{t\in\partial C},\\
			\label{EulerD}&E_{\lambda;C}[D_xf](x)=q^{-\lambda} D_x E_{\lambda;C} [f](x)+x^\lambda \left[\frac{1-q^{-\lambda}t/x}{1-t/x}f(t)\frac{(q^{1-\lambda}t/x)_\infty}{(qt/x)_\infty}\right]_{t\in\partial C}.
		\end{align}
	\end{lemm}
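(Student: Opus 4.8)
The plan is to reduce both formulas to a single functional identity for the integral kernel and then push it through the Jackson-integral rules \eqref{JacksonD}, \eqref{JacksonT} together with the $q$-Leibniz rule \eqref{qLeib}. Set
\[
\mathcal{K}(x,t)=x^\lambda\,\frac{(q^{1-\lambda}t/x)_\infty}{(qt/x)_\infty},\qquad\text{so that}\qquad E_{\lambda;C}[f](x)=\int_C f(t)\,\mathcal{K}(x,t)\,d_qt .
\]
Since $\mathcal{K}(x,t)=x^\lambda h(t/x)$ with $h(u)=(q^{1-\lambda}u)_\infty/(qu)_\infty$, it is homogeneous of degree $\lambda$, i.e. $\mathcal{K}(qx,qt)=q^\lambda\mathcal{K}(x,t)$. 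Applying the Pochhammer recursion $(a)_\infty=(1-a)(qa)_\infty$ with $a=t/x$ and $a=q^{-\lambda}t/x$ gives the shift identity
\[
\mathcal{K}(x,t/q)=q^{-\lambda}\mathcal{K}(qx,t)=\frac{1-q^{-\lambda}t/x}{1-t/x}\,\mathcal{K}(x,t)
=x^\lambda\,\frac{1-q^{-\lambda}t/x}{1-t/x}\,\frac{(q^{1-\lambda}t/x)_\infty}{(qt/x)_\infty},
\]
so the bracketed quantities appearing in \eqref{EulerT} and \eqref{EulerD} are precisely $f(t)\,\mathcal{K}(x,t/q)$, up to the extra factor $t$ in \eqref{EulerT}.

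Next I would record the ``kernel integration-by-parts identity''
\[
D_t\bigl(\mathcal{K}(x,t/q)\bigr)=-q^{-\lambda}\,D_x\bigl(\mathcal{K}(x,t)\bigr),
\]
where the left side is $D$ in $t$ with $x$ frozen and the right side is $D$ in $x$ with $t$ frozen. This is a one-line verification from the two properties above: $\mathcal{K}(x,t/q)-\mathcal{K}(x,t)=\dfrac{(1-q^{-\lambda})t}{x-t}\mathcal{K}(x,t)$ and $\mathcal{K}(x,t)-\mathcal{K}(qx,t)=\dfrac{(1-q^{\lambda})x}{x-t}\mathcal{K}(x,t)$, hence both sides equal $\dfrac{1-q^{-\lambda}}{(1-q)(x-t)}\mathcal{K}(x,t)$ (using $1-q^{-\lambda}=-q^{-\lambda}(1-q^\lambda)$). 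This is the $q$-analogue of $\partial_t(x-t)^\lambda=-\partial_x(x-t)^\lambda$, with the expected $q$-shift built in.

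For \eqref{EulerD} I would apply the first form of \eqref{qLeib} to $f(t)\,\mathcal{K}(x,t/q)$, using $T_t\mathcal{K}(x,t/q)=\mathcal{K}(x,t)$, which gives
\[
(D_tf)(t)\,\mathcal{K}(x,t)=D_t\bigl(f(t)\mathcal{K}(x,t/q)\bigr)-f(t)\,D_t\bigl(\mathcal{K}(x,t/q)\bigr)=D_t\bigl(f(t)\mathcal{K}(x,t/q)\bigr)+q^{-\lambda}f(t)\,D_x\bigl(\mathcal{K}(x,t)\bigr),
\]
and then integrate over $C$: the first term contributes the boundary term $[f(t)\mathcal{K}(x,t/q)]_{t\in\partial C}$ by \eqref{JacksonD}, and the second contributes $q^{-\lambda}D_x\int_C f(t)\mathcal{K}(x,t)\,d_qt=q^{-\lambda}D_xE_{\lambda;C}[f](x)$ after pulling $D_x$ through the (convergent) Jackson sum in $t$; the shift identity identifies the boundary term with the one stated. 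For \eqref{EulerT} I would instead use homogeneity to write $f(qt)\mathcal{K}(x,t)=T_t\bigl(q^{-\lambda}f(t)\mathcal{K}(qx,t)\bigr)$ and apply \eqref{JacksonT}: the main term becomes $q^{-1}\!\int_C q^{-\lambda}f(t)\mathcal{K}(qx,t)\,d_qt=q^{-\lambda-1}T_xE_{\lambda;C}[f](x)$, and the boundary term, with $q^{-\lambda}\mathcal{K}(qx,t)=\mathcal{K}(x,t/q)$ inserted, is exactly the one in \eqref{EulerT}. The only real obstacle is bookkeeping: one must pick the form of \eqref{qLeib} (respectively the placement of the shift in \eqref{JacksonT}) that makes $(D_tf)(t)$ — resp. $f(qt)$ — pair with $\mathcal{K}$ evaluated exactly at $(x,t)$, which is precisely what forces the ``$t/q$'' into the boundary terms; one must also note that $D_x$ and $T_x$ commute with the Jackson summation in $t$ termwise, which is legitimate under the stated convergence hypothesis.
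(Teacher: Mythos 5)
Your proof is correct; I checked the kernel identities ($\mathcal{K}(x,t/q)-\mathcal{K}(x,t)=\frac{(1-q^{-\lambda})t}{x-t}\mathcal{K}(x,t)$, $\mathcal{K}(x,t)-\mathcal{K}(qx,t)=\frac{(1-q^{\lambda})x}{x-t}\mathcal{K}(x,t)$, hence $D_t\mathcal{K}(x,t/q)=-q^{-\lambda}D_x\mathcal{K}(x,t)$) and both derivations go through. For \eqref{EulerT} your argument is essentially the paper's: the paper writes $T_tf(t)\cdot K=T_tT_x[f(t)K]$ using that the kernel without $x^\lambda$ depends only on $t/x$, then applies \eqref{JacksonT}; your use of degree-$\lambda$ homogeneity of $\mathcal{K}$ (producing the factor $q^{-\lambda}$) is the same computation in different clothes. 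For \eqref{EulerD}, however, your route is genuinely different in organization: the paper applies the $q$-Leibniz rule in the form $D_tf\cdot K=D_t[fK]-T_tf\cdot D_tK$, computes $D_tK=\frac{q}{(1-q)x}(1-q^{-\lambda}T_x^{-1})K$, and then has to invoke \eqref{EulerT} to convert the resulting $\int_C T_tf\cdot K\,d_qt$ into $T_x$ acting on the transform, so \eqref{EulerD} is chained off \eqref{EulerT}. You instead apply the other placement of \eqref{qLeib} to $f(t)\mathcal{K}(x,t/q)$ and use the symmetric identity $D_t\mathcal{K}(x,t/q)=-q^{-\lambda}D_x\mathcal{K}(x,t)$, obtaining \eqref{EulerD} directly and independently of \eqref{EulerT}. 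What your version buys is a cleaner $q$-analogue of the classical kernel relation $\partial_t(x-t)^\lambda=-\partial_x(x-t)^\lambda$ and logical independence of the two formulas; what the paper's version buys is slightly less kernel bookkeeping by recycling \eqref{EulerT}. Your remarks on interchanging $D_x$, $T_x$ with the Jackson sum are at the same level of rigor as the paper's (both rely on the stated convergence hypothesis), so there is no gap.
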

	\begin{proof}
		First we show \eqref{EulerT}.
		We have
		\begin{align}
			\notag&\int_C T_tf(t)\cdot \frac{(q^{1-\lambda}t/x)_\infty}{(qt/x)_\infty}d_qt\\
			\notag=&\int_C T_tT_x\left[f(t) \frac{(q^{1-\lambda}t/x)_\infty}{(qt/x)_\infty}\right]d_qt\\
			\notag=&q^{-1}T_x\int_C f(t)\frac{(q^{1-\lambda}t/x)_\infty}{(qt/x)_\infty}d_qt-\frac{1-q}{q}T_x\left[tf(t)\frac{(q^{1-\lambda}t/x)_\infty}{(qt/x)_\infty}\right]_{t\in\partial C}\\
			&=q^{-1}T_x\int_C f(t)\frac{(q^{1-\lambda}t/x)_\infty}{(qt/x)_\infty}d_qt-\frac{1-q}{q}\left[t\frac{1-q^{-\lambda}t/x}{1-t/x}f(t)\frac{(q^{1-\lambda}t/x)_\infty}{(qt/x)_\infty}\right]_{t\in\partial C}
		\end{align}
		Multiplying both sides by $x^\lambda$, we get \eqref{EulerD}.
		
		Next, we show \eqref{EulerD}.
		Due to the $q$-Leibniz rule, we have
		\begin{align}
			\notag&D_tf(t)\cdot \frac{(q^{1-\lambda}t/x)_\infty}{(qt/x)_\infty}\\
			=&D_t\left[f(t)\frac{(q^{1-\lambda}t/x)_\infty}{(qt/x)_\infty}\right]-T_tf(t)\cdot D_t\frac{(q^{1-\lambda}t/x)_\infty}{(qt/x)_\infty}.
		\end{align}
		By a simple calculation we obtain
		\begin{align}
			D_t\frac{(q^{1-\lambda}t/x)_\infty}{(qt/x)_\infty}=\frac{q}{(1-q)x}(1-q^{-\lambda} T_x^{-1})\frac{(q^{1-\lambda}t/x)_\infty}{(qt/x)_\infty}.
		\end{align}
		Therefore we have 
		\begin{align}
			&\int_C D_tf(t)\cdot \frac{(q^{1-\lambda}t/x)_\infty}{(qt/x)_\infty}d_qt
			\notag\\
			=&\left[f(t)\frac{(q^{1-\lambda}t/x)_\infty}{(qt/x)_\infty}\right]_{t\in\partial C}+\frac{q}{(1-q)x}(q^{-\lambda} T_x^{-1}-1)\int_C T_tf(t)\cdot\frac{(q^{1-\lambda}t/x)_\infty}{(qt/x)_\infty}d_qt.
		\end{align}
		Multiplying both sides by $x^\lambda$ and using \eqref{EulerT}, we finally get \eqref{EulerD}.
	\end{proof}
	\begin{rem}
		The equation \eqref{EulerD} is obviously a $q$-analog of 
		\begin{align}
			\int_C \frac{d}{dt}f(t)\cdot (x-t)^\lambda dt=\frac{d}{dx}\int_C f(t)(x-t)^\lambda dt,
		\end{align}
		if the boundary term vanishes.
		By the limit
		\begin{align}
			\frac{1-q^\mu T_x}{1-q}f(x)\xrightarrow{q\to1}(\vartheta_x+\mu) f(x),
		\end{align}
		where $\vartheta_x=x\dfrac{d}{dx}$, the equation \eqref{EulerT} is regarded as a $q$-analog of 
		\begin{align}
			\int_C \vartheta_t f(t)\cdot (x-t)^\lambda dt=(\vartheta_x-\lambda-1)\int_Cf(t)(x-t)^\lambda dt.
		\end{align}
	\end{rem}
	In the following, we consider a transformation of the equation
	\begin{align}\label{qFuchs}
		D_xy=\sum_{i=1}^N\frac{B_i}{x-t_i}y\quad (B_i:M\times M\mbox{ constant matrix}).
	\end{align}
	\begin{lemm}\label{lemOkubo}
		Suppose that $y$ satisfies the equation \eqref{qFuchs}.
		We put
		\begin{align}\label{OkuboY}
			Y_0=\leftidx{^\mathrm{T}}{\left[\frac{y}{x-t_1},\frac{y}{x-t_2},\ldots,\frac{y}{x-t_N}\right]},
		\end{align}
		where ${}^\mathrm{T}z$ is the transposed matrix of $z$.
		The function $Y_0$ satisfies the $q$-Okubo type system
		\begin{align}\label{makeqOkubo}
			\left(xI-\frac{S}{q}\right)D_xY_0=\frac{1}{q}(B-I)Y_0,
		\end{align}
		where $S$ and $B$ are $MN\times MN$ matrices given by
		\begin{align}\label{defSB}
			S=\begin{bmatrix}
				t_1 I_M&O&\cdots &O\\
				O&t_2 I_M&\ddots&\vdots\\
				\vdots&\ddots &\ddots&O\\
				O&\cdots&O&t_N I_M
			\end{bmatrix},\quad
			B=\begin{bmatrix}
				B_1&B_2&\cdots &B_N\\
				B_1&B_2&\cdots &B_N\\
				\vdots &\vdots & &\vdots\\
				B_1&B_2&\cdots &B_N
			\end{bmatrix}.
		\end{align}
	\end{lemm}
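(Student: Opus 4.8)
The plan is to verify \eqref{makeqOkubo} block by block: writing $Y_0$ in the $N$ blocks of size $M$ specified in \eqref{OkuboY}, it suffices to check that, for each $i\in\{1,\ldots,N\}$, the $i$-th block of $\left(xI-\frac{S}{q}\right)D_xY_0$ coincides with the $i$-th block of $\frac{1}{q}(B-I)Y_0$. Since the $i$-th diagonal block of $S$ is $t_iI_M$ and the $i$-th block of $Y_0$ is $\frac{y}{x-t_i}$, the left-hand side block equals $\left(x-\frac{t_i}{q}\right)D_x\!\left(\frac{y}{x-t_i}\right)$; and since the $i$-th block row of $B$ in \eqref{defSB} is $(B_1,\ldots,B_N)$, the right-hand side block equals $\frac{1}{q}\left(\sum_{j=1}^{N}B_j\frac{y}{x-t_j}-\frac{y}{x-t_i}\right)$.

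First I would record the elementary identity $D_x\frac{1}{x-t_i}=\frac{-1}{(x-t_i)(qx-t_i)}$, which follows in one line from $D_x=\frac{1-T_x}{(1-q)x}$. Next, applying the $q$-Leibniz rule \eqref{qLeib} in the form $D_x(fg)=f(qx)D_x(g)+D_x(f)\,g$ with $f(x)=\frac{1}{x-t_i}$ and $g=y$, I obtain
\begin{align*}
	D_x\!\left(\frac{y}{x-t_i}\right)=\frac{D_xy}{qx-t_i}-\frac{y}{(x-t_i)(qx-t_i)}.
\end{align*}

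The key point is the factorization $x-\frac{t_i}{q}=\frac{qx-t_i}{q}$, which exactly cancels the denominators $qx-t_i$: multiplying the last display by $x-\frac{t_i}{q}$ gives $\frac{1}{q}\bigl(D_xy-\frac{y}{x-t_i}\bigr)$. Substituting the hypothesis \eqref{qFuchs}, $D_xy=\sum_{j=1}^{N}\frac{B_j}{x-t_j}y$, turns this into $\frac{1}{q}\bigl(\sum_{j=1}^{N}B_j\frac{y}{x-t_j}-\frac{y}{x-t_i}\bigr)$, which is precisely the $i$-th block of the right-hand side computed above; this completes the proof.

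The computation is direct and presents no real obstacle; the only thing to watch is choosing the form of the $q$-Leibniz rule so that the factor carrying the $q$-shift is $\frac{1}{x-t_i}$ (hence producing $qx-t_i$, matching the shift $t_i\mapsto t_i/q$ encoded in $S/q$), and keeping in mind that \eqref{makeqOkubo} is a formal identity between matrices of rational functions in $x$, so no convergence question intervenes.
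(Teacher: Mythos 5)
Your proof is correct and is essentially the paper's own argument: the paper likewise applies the $q$-Leibniz rule \eqref{qLeib} in the form $D_x\bigl(\tfrac{y}{x-t_i}\bigr)=\tfrac{1}{qx-t_i}D_xy+D_x\tfrac{1}{x-t_i}\cdot y$, substitutes \eqref{qFuchs}, and clears the factor $qx-t_i$ to read off the $i$-th block of \eqref{makeqOkubo}. The only difference is the immaterial order of substituting the equation versus multiplying by $x-t_i/q$.
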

	\begin{proof}
		By the $q$-Leibniz rule \eqref{qLeib}, we have
		\begin{align}
			D_x \frac{y}{x-t_i}&=\frac{1}{qx-t_i}D_xy+D_x\frac{1}{x-t_i}\cdot y
			\notag\\
			&=\frac{1}{qx-t_i}\sum_{j=1}^N\left(\frac{B_j-\delta_{i,j}I}{x-t_j}\right)y,
		\end{align}
		where $\delta_{i,j}$ is the Kronecker delta.
		Therefore we obtain
		\begin{align}
			(qx-t_i)D_x \frac{y}{x-t_i}=\sum_{j=1}^N({B_j-\delta_{i,j}I})\frac{y}{x-t_j}.
		\end{align}
		This completes the proof of \eqref{makeqOkubo}.
	\end{proof}
	\begin{prop}
		Let $y$ be a solution of the equation \eqref{qFuchs} and $Y_0$ is given in \eqref{OkuboY}.
		We suppose that the path $C$ satisfies
		\begin{align}\label{boundary}
			\left[\frac{1-q^{-\lambda}t/x}{1-t/x}(tI-S)Y_0(t) \frac{(q^{1-\lambda}t/x)_\infty}{(qt/x)_\infty}\right]_{t\in\partial C}=0.
		\end{align}
		Then the function $Y=E_{\lambda;C}[Y_0](x)$ satisfies the following system:
		\begin{align}\label{EulerOkubo}
			(xI-S)D_xY=\left(q^\lambda B+[\lambda]I\right)Y.
		\end{align}
		Here $S$ and $B$ are given in \eqref{defSB}, and $[\lambda]=\dfrac{1-q^\lambda}{1-q}$.
		In particular $Y$ satisfies
		\begin{align}\label{EulerFuchs}
			D_x Y=\sum_{i=1}^N\frac{G_i}{x-t_i}Y,
		\end{align}
		where 
		\begin{align}\label{Gi}
			G_i=\begin{blockarray}{cccccccc}
				\begin{block}{[ccccccc]c}
					&&&O\\
					&&&\vdots\\
					&&&O\\
					q^\lambda B_1&\cdots &q^\lambda B_{i-1}&q^\lambda B_i+[\lambda]I&q^\lambda B_{i+1}&\cdots &q^\lambda B_N &\mbox{$i$-th block}\\
					&&&O\\
					&&&\vdots\\
					&&&O\\
				\end{block}
			\end{blockarray}
		\end{align}
	\end{prop}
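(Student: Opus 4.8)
The plan is to start from the $q$-Okubo type system \eqref{makeqOkubo} satisfied by $Y_0$, namely $\left(xI - \tfrac{S}{q}\right)D_x Y_0 = \tfrac{1}{q}(B-I)Y_0$, and push it through the $q$-Euler transformation $E_{\lambda;C}$ using the commutation relations \eqref{EulerT} and \eqref{EulerD}. First I would rewrite \eqref{makeqOkubo} in the cleaner form $(qxI - S)D_x Y_0 = (B - I)Y_0$, and then compute $E_{\lambda;C}$ of both sides. On the right-hand side, $E_{\lambda;C}$ is $\mathbb{C}$-linear and commutes with multiplication by constant matrices, so $E_{\lambda;C}[(B-I)Y_0] = (B-I)Y$. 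On the left-hand side I must handle two operations: multiplication by $x$ (equivalently by the diagonal $S$, which is constant) and the operator $D_x$, and crucially also the factor $q$ multiplying $x$. The term $qx\,D_x Y_0$ should be treated as $q\, x D_x Y_0$; since $E_{\lambda}$ does not simply commute with multiplication by $x$, I would instead first move $D_x$ through $E_\lambda$ via \eqref{EulerD} and then deal with the resulting $x$-multiplication using the identity $x D_x = \tfrac{1}{1-q}(1 - T_x) $ together with \eqref{EulerT}, or — more transparently — express everything in terms of $T_x$ from the outset.

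Concretely, the cleanest route is: rewrite \eqref{makeqOkubo} purely in terms of $T_x$. Using $D_x = \tfrac{1}{(1-q)x}(1 - T_x)$, the system $(qxI - S)D_x Y_0 = (B-I)Y_0$ becomes a relation of the form $(qxI - S)(1-T_x)Y_0 = (1-q)x(B-I)Y_0$, i.e. a two-term recurrence relating $Y_0(x)$ and $Y_0(qx)$ with coefficients linear in $x$. Apply $E_{\lambda;C}$ to this, using \eqref{EulerT} to convert $E_\lambda[T_x Y_0]$ into $q^{-\lambda-1}T_x E_\lambda[Y_0]$ plus a boundary term, and using the trivial fact that $E_\lambda[xf] $ is handled by noting $x^\lambda \int x f(t)(\cdots)d_qt = x \cdot E_\lambda[f]$ — that is, multiplication by $x$ on the \emph{outside} of the integral commutes with $E_\lambda$ trivially since $x$ factors out. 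The key algebraic point is that in the integrand the only $x$-dependence sits inside $\tfrac{(q^{1-\lambda}t/x)_\infty}{(qt/x)_\infty}$, and multiplying $f(t)$ by a constant matrix or integrating $t f(t)$ interacts with $T_x$ exactly as recorded in the lemma. After collecting the terms, the boundary contributions assemble precisely into the expression $\left[\tfrac{1-q^{-\lambda}t/x}{1-t/x}(tI-S)Y_0(t)\tfrac{(q^{1-\lambda}t/x)_\infty}{(qt/x)_\infty}\right]_{t\in\partial C}$, which vanishes by hypothesis \eqref{boundary}; and the bulk terms should rearrange, after multiplying by an overall power of $q$ and simplifying $[\lambda] = \tfrac{1-q^\lambda}{1-q}$, into $(xI-S)D_x Y = (q^\lambda B + [\lambda]I)Y$, which is \eqref{EulerOkubo}.

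Once \eqref{EulerOkubo} is established, the reduction to the Fuchsian form \eqref{EulerFuchs} with residues \eqref{Gi} is essentially bookkeeping: since $S$ is block-diagonal with the $i$-th block equal to $t_i I_M$, the matrix $(xI-S)^{-1}$ is block-diagonal with $i$-th block $\tfrac{1}{x-t_i}I_M$, so $D_x Y = (xI-S)^{-1}(q^\lambda B + [\lambda]I)Y = \sum_i \tfrac{1}{x-t_i} P_i (q^\lambda B + [\lambda]I) Y$ where $P_i$ is the projection onto the $i$-th block of rows. Reading off $P_i(q^\lambda B + [\lambda]I)$ from the explicit shape of $B$ in \eqref{defSB} — whose every block-row equals $(B_1, \ldots, B_N)$ — gives exactly the matrix $G_i$ displayed in \eqref{Gi}, with the diagonal correction $[\lambda]I$ landing in the $i$-th block because $P_i(\,[\lambda]I\,)$ is $[\lambda]$ times the projection. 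I would also remark, for completeness, that the partial-fraction decomposition is consistent (the $G_i$ sum correctly governs behavior at $x=\infty$), though this is not needed for the statement.

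I expect the main obstacle to be the careful tracking of the powers of $q$ and of the boundary terms in the left-hand side computation: the factor $q$ multiplying $x$ in \eqref{makeqOkubo} must be threaded through the non-commutation of $E_\lambda$ with $T_x$, and a sign or $q$-power error there would corrupt both the $q^\lambda B$ coefficient and the $[\lambda]I$ shift. The safest way to avoid this is to do the entire manipulation in the $T_x$-picture (two-term recurrences with coefficients affine in $x$) rather than the $D_x$-picture, apply \eqref{EulerT} exactly once, and only at the very end convert back to $D_x$ using $D_x = \tfrac{1}{(1-q)x}(1-T_x)$ and the definition of $[\lambda]$; the boundary term then emerges already in the packaged form appearing in \eqref{boundary}.
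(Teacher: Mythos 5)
Your closing step (multiplying \eqref{EulerOkubo} by $(xI-S)^{-1}$ and reading off $G_i$ from the block structure of $B$) is fine, but the concrete route you commit to for proving \eqref{EulerOkubo} rests on a false identity. Applying $E_{\lambda;C}$ to the function $x\mapsto xf(x)$ gives $x^\lambda\int_C t\,f(t)\frac{(q^{1-\lambda}t/x)_\infty}{(qt/x)_\infty}\,d_qt$: the multiplication lands on the integration variable $t$, not on the outer $x$, so $E_{\lambda;C}[xf]\neq x\,E_{\lambda;C}[f]$ and nothing ``factors out''. Consequently your ``cleanest route'' --- transforming the recurrence $(qxI-S)(1-T_x)Y_0=(1-q)x(B-I)Y_0$ term by term while letting $x$ commute with $E_\lambda$ --- mishandles the terms $xY_0$, $xT_xY_0$ and $x(B-I)Y_0$; if you track the $q$-powers, the resulting relation is not equivalent to \eqref{EulerOkubo} (the coefficients of $ST_xY$ and of $BY$ come out off by powers of $q$), which is exactly the corruption you anticipated. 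Neither \eqref{EulerT} nor \eqref{EulerD} supplies a rule for $E_\lambda$ applied to $t\,f(t)$, and no such rule is available or needed in the paper; this is the missing point.

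The fix is the option you mention only in passing in your first paragraph, and it is the paper's actual proof: before transforming, rewrite \eqref{makeqOkubo} so that no bare multiplication by $x$ (or $x^{-1}$) acts on $Y_0$, using $xD_x=\frac{1-T_x}{1-q}$, namely
\begin{align*}
	\frac{1-T_x}{1-q}\,Y_0-\frac{S}{q}\,D_xY_0=\frac{1}{q}(B-I)Y_0 .
\end{align*}
Every term is now a constant matrix times $Y_0$, $T_xY_0$ or $D_xY_0$, so you apply \eqref{EulerT} to the $T_x$-term and \eqref{EulerD} to the $D_x$-term; you cannot use \eqref{EulerT} ``exactly once'' as you propose, since eliminating $D_x$ from the $S$-term would reintroduce a factor $x^{-1}$ with the same non-commutation problem. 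The two boundary contributions then combine --- the factor $t$ coming from \eqref{EulerT} and the matrix $S$ from the $D_x$-term --- precisely into the expression $(tI-S)Y_0(t)$ of hypothesis \eqref{boundary}, and the remaining bookkeeping, using $-q^\lambda+\frac{1-q^{\lambda+1}}{1-q}=[\lambda]$, yields \eqref{EulerOkubo}.
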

	\begin{proof}
		It follows from Lemma \ref{lemOkubo} that
		\begin{align}
			\frac{1-T_x}{1-q}Y_0-\frac{S}{q}D_xY_0=\frac{1}{q}(B-I)Y_0.
		\end{align}
		Applying $E_{\lambda;C}$ to both sides, we have
		\begin{align}
			\frac{1-q^{-\lambda-1}T_x}{1-q}Y+-{S}q^{-\lambda-1}D_xY=\frac{1}{q}(B-I)Y.
		\end{align}
		Here the boundary term vanishes due to \eqref{boundary}.
		This is equivalent to \eqref{EulerOkubo}.
		The equation \eqref{EulerFuchs} can be derived by multiplying $(xI-S)^{-1}$ to both sides of \eqref{EulerOkubo} from the left.
	\end{proof}
	\begin{defn}
		The $q$-convolution $c_\lambda$ is defined by the correspondence $(B_1,\ldots,B_N)\mapsto (G_1,\ldots,G_N)$, where $G_i$ is given in \eqref{Gi}.
	\end{defn}
	The following proposition gives invariant spaces of $G_i$-action, where $c_\lambda:(B_1,\ldots,B_N)\mapsto(G_1,\ldots,G_N)$.
	\begin{prop}[{\cite[Proposition 3.9]{AT}}]\label{propinvariant}
		We define 
		\begin{align}
			&\mathcal{K}=\begin{bmatrix}
				\mathrm{Ker} B_1\\
				\vdots\\
				\mathrm{Ker} B_N
			\end{bmatrix}=\mathrm{Ker}B_1\oplus\cdots\oplus\mathrm{Ker}B_N\subset \mathbb{C}^{MN},
			\\
			&\mathcal{L}=\mathrm{Ker} (G_1+\cdots+G_N)\subset \mathbb{C}^{MN}.
		\end{align}
		The spaces $\mathcal{K}$ and $\mathcal{L}$ are invariant under the $G_i$-action for $i=1,\ldots,N$.
	\end{prop}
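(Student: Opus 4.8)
The plan is to argue blockwise. Write a vector $v\in\mathbb{C}^{MN}$ as a stack of $N$ blocks $v=(v_1,\dots,v_N)$ with each $v_j\in\mathbb{C}^{M}$, and set $w(v)=\sum_{j=1}^{N}B_jv_j\in\mathbb{C}^{M}$. Reading off the shape of $G_i$ from \eqref{Gi}, one sees that $G_iv$ has every block zero except the $i$-th, which equals $q^{\lambda}w(v)+[\lambda]v_i$; consequently the $k$-th block of $(G_1+\cdots+G_N)v$ is $q^{\lambda}w(v)+[\lambda]v_k$. These two formulas are the only computational input, and the rest is immediate.

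To show that $\mathcal{K}$ is invariant, take $v\in\mathcal{K}$, so $B_jv_j=0$ for all $j$ and hence $w(v)=0$. Then the $i$-th block of $G_iv$ is $[\lambda]v_i$, which lies in $\mathrm{Ker}\,B_i$ because $\mathrm{Ker}\,B_i$ is a linear subspace and $v_i\in\mathrm{Ker}\,B_i$, while the other blocks of $G_iv$ vanish. Therefore $G_iv\in\mathcal{K}$.

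To show that $\mathcal{L}$ is invariant, take $v\in\mathcal{L}$, i.e.\ $q^{\lambda}w(v)+[\lambda]v_k=0$ for every $k$. In particular the $i$-th block of $G_iv$, namely $q^{\lambda}w(v)+[\lambda]v_i$, vanishes, and all other blocks of $G_iv$ vanish by the shape of $G_i$; hence $G_iv=0\in\mathcal{L}$. (In fact this argument yields the stronger statement $\mathcal{L}\subseteq\mathrm{Ker}\,G_i$ for each $i$.)

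I do not expect any genuine obstacle here: the only points that require a little care are extracting the block structure of $G_i$ correctly from \eqref{Gi}, and observing that neither part of the argument divides by $[\lambda]$, so no separate treatment of the degenerate case $[\lambda]=0$ (equivalently $q^{\lambda}=1$) is needed.
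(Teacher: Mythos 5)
Your proof is correct and follows the same route the paper intends: the paper merely states that the invariance ``can be checked straightforwardly,'' and your blockwise computation (the $i$-th block of $G_iv$ being $q^{\lambda}\sum_j B_jv_j+[\lambda]v_i$, all other blocks zero) is exactly that straightforward check, carried out in full, including the correct observation that $\mathcal{L}\subseteq\mathrm{Ker}\,G_i$ and that no division by $[\lambda]$ is needed.
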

	\begin{proof}
		This can be checked straightforwardly.
	\end{proof}
	\begin{defn}
		We denote the matrix induced from the $G_i$-action on the quotient space $\mathbb{C}^{MN}/(\mathcal{K}+\mathcal{L})$ by $\bar{G}_i$ $(i=1,\ldots,N)$.
		The $q$-middle convolution $mc_\lambda$ is defined by the correspondence $(B_1,\ldots,B_N)\mapsto (\bar{G}_1,\ldots,\bar{G}_N)$.
	\end{defn}
	\begin{rem}
		This $q$-middle convolution is almost the same as Arai--Takemura's one \cite[Definition 3.1]{AT}.
		Hence, several properties of Arai--Takemura's $q$-middle convolution hold for our $q$-middle convolution (see Proposition \ref{propinvariant}, \ref{proppreserve}, \ref{propmc0}, \ref{propirr} and Theorem \ref{thmcompose}).
		The only difference is the treatment of the point $x=0$ which is important to prove several results in Section \ref{secE8}.
		In Arai--Takemura's construction, the point $x=0$ should be regarded as a singularity even if the corresponding residue matrix is $O$.
	\end{rem}
	\begin{rem}
		The method to construct the $q$-middle convolution via the $q$-Okubo type equation has been developed in private communications with Shunya Adachi.
	\end{rem}
	\begin{defn}[{\cite[Definition 5.1]{AT}}]\label{defisom}
		Let $V$ and $W$ be finite-dimensional vector spaces and let $\mathbf{B}=(B_1,\ldots,B_N)$ (resp. $\mathbf{C}=(C_1,\ldots,C_N)$) be a tuple of endomorphisms of $V$ (resp. $W$).
		\begin{itemize}
			\item[(i)]  $(V,\mathbf{B})$ is isomorphic to $(W,\mathbf{C})$, if there exists an isomorphism $\phi:V \to W$ of the vector spaces such that $\phi\circ B_j=C_j\circ \phi$ for $i=1,\ldots,N$.
			\item[(ii)] $(V,\mathbf{B})$ is irreducible, if the subspace $V'\subset V$ such that $B_jV'\subset V'$ for all $j=1,\ldots,N$ is only $V'=V$ or $V'=\{0\}$.
			\item[(iii)] $(V,\mathbf{B})$ satisfies ($\ast$), if 
			\begin{align}
				\left(\bigcap_{\substack{i=1\\i\neq j}}^N \mathrm{Ker}(B_i)\right)\cap\mathrm{Ker}(B_j+\tau)=\{0\}\quad\mbox{for any $j=1,\ldots,N$ and $\tau\in\mathbb{C}$}.
			\end{align}
			\item[(iv)] $(V,\mathbf{B})$ satisfies ($\ast\ast$), if 
			\begin{align}
				\left(\sum_{\substack{i=1\\i\neq j}}^N \mathrm{Im}(B_i)\right)+\mathrm{Im}(B_j+\tau)=V\quad\mbox{for any $j=1,\ldots,N$ and $\tau\in\mathbb{C}$}.
			\end{align}
		\end{itemize}
	\end{defn}
	\begin{prop}[{\cite[Proposition 5.6]{AT}}]\label{proppreserve}
		The conditions ($\ast$) and ($\ast\ast$) are preserved by the $q$-middle convolution $mc_\lambda$.
	\end{prop}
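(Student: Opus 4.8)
The plan is to unwind $mc_\lambda$ into its two elementary steps — passing from $\mathbf{B}=(B_1,\dots,B_N)$ to the convolution tuple $\mathbf{G}=(G_1,\dots,G_N)$ on $W=\mathbb{C}^{MN}$, then projecting onto $\bar W=W/(\mathcal{K}+\mathcal{L})$ — and to transport $(\ast)$ and $(\ast\ast)$ through each step by explicit linear algebra. Since $(\ast)$, $(\ast\ast)$ and the action of $mc_\lambda$ depend only on the tuple $(B_1,\dots,B_N)$ and not on the bookkeeping at $x=0$, the argument is that of \cite[Proposition 5.6]{AT}; I outline it.

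First I would record the block structure coming from \eqref{Gi}. Write $E_i=\{0\}^{(i-1)M}\oplus\mathbb{C}^M\oplus\{0\}^{(N-i)M}$ for the $i$-th block and $u:=\sum_mB_mv_m$ for $v=(v_1,\dots,v_N)\in W$. Then $\mathrm{Im}(G_i)\subseteq E_i$, the $i$-th component of $G_iv$ is $q^\lambda u+[\lambda]v_i$ and the others vanish; for $\tau\neq0$, $\mathrm{Ker}(G_i+\tau)=\{0\}\oplus\cdots\oplus\mathrm{Ker}\big(q^\lambda B_i+([\lambda]+\tau)I\big)\oplus\cdots\oplus\{0\}\subseteq E_i$, with an analogous explicit description of $\mathrm{Im}(G_i+\tau)$; moreover $\bigcap_i\mathrm{Ker}(G_i)=\mathrm{Ker}\big(\sum_iG_i\big)=\mathcal{L}$, realized diagonally as $\{(w,\dots,w):q^\lambda(\sum_mB_m)w+[\lambda]w=0\}$, while on $\mathcal{K}$ each $G_i$ acts as $[\lambda]$ times the projection onto $E_i$. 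If $[\lambda]=0$, i.e. $q^\lambda=1$, then $mc_\lambda$ reduces to the identity up to isomorphism (Proposition \ref{propmc0}) and there is nothing to prove, so I assume $[\lambda]\neq0$; then $\mathcal{K}\cap\mathcal{L}=\{0\}$, hence $\mathcal{K}+\mathcal{L}=\mathcal{K}\oplus\mathcal{L}$.

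For $(\ast)$ the key preliminary is that $(\mathcal{K}\oplus\mathcal{L})\cap E_i=\mathcal{K}\cap E_i$ for every $i$: if $k+(w,\dots,w)\in E_i$ with $k\in\mathcal{K}$, comparing the $m$-th blocks for $m\neq i$ forces $k_m=-w\in\mathrm{Ker}(B_m)$, so $(\sum_mB_m)w=B_iw$ and the $\mathcal{L}$-condition reads $w\in\mathrm{Ker}\big(B_i+q^{-\lambda}[\lambda]I\big)$; thus $w\in\big(\bigcap_{m\neq i}\mathrm{Ker}(B_m)\big)\cap\mathrm{Ker}\big(B_i+q^{-\lambda}[\lambda]I\big)=\{0\}$ by $(\ast)$ for $\mathbf{B}$. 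Granting this, take $\bar v\in\big(\bigcap_{i\neq j}\mathrm{Ker}(\bar G_i)\big)\cap\mathrm{Ker}(\bar G_j+\tau)$ and lift it to $v\in W$. For $i\neq j$, $G_iv\in(\mathcal{K}\oplus\mathcal{L})\cap E_i=\mathcal{K}\cap E_i$, i.e. $B_i(q^\lambda u+[\lambda]v_i)=0$; the condition $(G_j+\tau)v\in\mathcal{K}\oplus\mathcal{L}$, decomposed along the two summands, supplies the remaining relations in the $j$-th block. Subtracting a suitable element of $\mathcal{L}$ from $v$ and comparing blocks then shows $v\in\mathcal{K}\oplus\mathcal{L}$, the required cancellations being exactly $(\ast)$ for $\mathbf{B}$ at the parameters $q^{-\lambda}([\lambda]+\tau)$ and $q^{-\lambda}[\lambda]$. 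Hence $\bar v=0$ and $(\ast)$ holds for $mc_\lambda(\mathbf{B})$.

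The verification of $(\ast\ast)$ is dual: replace kernels by images, use $\mathrm{Im}(G_i)\subseteq E_i$ and the corresponding "complementary" intersection with $\mathcal{K}\oplus\mathcal{L}$, and show $\sum_{i\neq j}\mathrm{Im}(\bar G_i)+\mathrm{Im}(\bar G_j+\tau)=\bar W$ by lifting a linear form annihilating the left-hand side and feeding it into $(\ast\ast)$ for $\mathbf{B}$ at the shifted parameter; alternatively, deduce it from the $(\ast)$-case via the compatibility of $mc_\lambda$ with the formal transpose $\mathbf{B}\mapsto({}^{\mathrm{T}}B_1,\dots,{}^{\mathrm{T}}B_N)$, which interchanges $(\ast)$ and $(\ast\ast)$. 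I expect the only real obstacle to be the passage to the quotient: an element of $\mathrm{Ker}(\bar G_i)$ lifts merely to a $v$ with $G_iv\in\mathcal{K}\oplus\mathcal{L}$, not $G_iv=0$, so one must split $G_iv$ into its $\mathcal{K}$- and $\mathcal{L}$-components, use that $G_i$ preserves each summand and that $\mathrm{Im}(G_i)$ meets $\mathcal{K}\oplus\mathcal{L}$ only inside $\mathcal{K}$, and track the affine shift $\tau\mapsto q^{-\lambda}([\lambda]+\tau)$ relating the spectral parameters of $\bar{\mathbf{G}}$ to those of $\mathbf{B}$; with this bookkeeping in place, each reduction is a short computation.
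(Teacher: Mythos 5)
The paper itself does not prove this proposition: it is quoted from Arai--Takemura \cite[Proposition 5.6]{AT}, with the surrounding remark that the present $q$-middle convolution differs from theirs only in the bookkeeping at $x=0$, which does not affect the tuple $(B_1,\dots,B_N)\mapsto(\bar G_1,\dots,\bar G_N)$; so your proposal is a sketch of the direct linear-algebra argument rather than a match to an in-paper proof. Your structural preliminaries are all correct: the block description of $G_i$, $\mathrm{Ker}(G_i+\tau)\subseteq E_i$ for $\tau\neq0$, $\bigcap_i\mathrm{Ker}(G_i)=\mathcal{L}$ with the diagonal description when $[\lambda]\neq0$, the action of $G_i$ on $\mathcal{K}$, and the key lemma $(\mathcal{K}+\mathcal{L})\cap E_i=\mathcal{K}\cap E_i$ (which uses $(\ast)$ for $\mathbf{B}$ at $\tau=q^{-\lambda}[\lambda]$). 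The $(\ast)$-half can indeed be completed along the lines you indicate, and the shifted parameter $q^{-\lambda}([\lambda]+\tau)$ enters exactly where you say it does.

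As written, however, there are two genuine gaps. First, the step ``subtracting a suitable element of $\mathcal{L}$ from $v$ and comparing blocks'' solves $\tau v_m=k_m+w$ for $m\neq j$, i.e.\ it divides by $\tau$; the case $\tau=0$ of $(\ast)$ therefore needs a separate argument (it does go through: from $B_i\bigl(q^\lambda u+[\lambda]v_i\bigr)=0$ for all $i$ one writes $v$ explicitly as an element of $\mathcal{K}+\mathcal{L}$ when $[\lambda]\neq0$, but none of this is in your text). Second, and more seriously, the $(\ast\ast)$-half is only asserted. ``Dual'' is not automatic on the quotient $\bar W=\mathbb{C}^{MN}/(\mathcal{K}+\mathcal{L})$: one must lift a linear functional annihilating $\sum_{i\neq j}\mathrm{Im}(\bar G_i)+\mathrm{Im}(\bar G_j+\tau)$ and actually land in $(\ast\ast)$ for $\mathbf{B}$ at the shifted parameter, and your alternative via transposition requires proving that dualization intertwines the constructions: ${}^{\mathrm{T}}G_i$ has its nonzero blocks in the $i$-th \emph{column}, not the $i$-th row, so it is not the convolution matrix of $({}^{\mathrm{T}}B_1,\dots,{}^{\mathrm{T}}B_N)$ on the nose, and one must also identify the annihilator $(\mathcal{K}+\mathcal{L})^{\perp}$ with the space $\mathcal{K}'+\mathcal{L}'$ attached to the transposed tuple. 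Neither route is carried out, so the proposal as it stands establishes only the $(\ast)$-part of the statement.
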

	\begin{prop}[{\cite[Proposition 5.7]{AT}}]\label{propmc0}
		If $(V,\mathbf{B})$ satisfies ($\ast$), then $(mc_0 (V),mc_0 (\mathbf(B)))$ is isomorphic to $(V,\mathbf{B})$.
	\end{prop}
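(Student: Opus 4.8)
The plan is to carry out the three steps defining $mc_{0}$ explicitly and then to read off the intertwiner by hand. First I would specialise $\lambda=0$ in \eqref{Gi}: since $q^{0}=1$ and $[0]=0$, the matrix $G_{i}$ is the $MN\times MN$ block matrix whose $i$-th block row equals $(B_{1},\dots ,B_{N})$ and whose other block rows vanish. Writing $\mathbb{C}^{MN}=V^{\oplus N}$, introduce $\sigma\colon V^{\oplus N}\to V$, $\sigma(v_{1},\dots ,v_{N})=\sum_{j=1}^{N}B_{j}v_{j}$, the inclusion $\iota_{i}\colon V\hookrightarrow V^{\oplus N}$ onto the $i$-th summand, and the diagonal embedding $\delta\colon V\to V^{\oplus N}$. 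Then $G_{i}=\iota_{i}\circ\sigma$, $\sigma\circ\iota_{i}=B_{i}$, and $G_{1}+\dots +G_{N}=\delta\circ\sigma$. Since $\delta$ is injective, $\mathcal{L}=\mathrm{Ker}(G_{1}+\dots +G_{N})=\mathrm{Ker}\,\sigma$; moreover every $v\in\mathcal{K}$ satisfies $B_{j}v_{j}=0$ for all $j$, hence $\sigma(v)=0$, so $\mathcal{K}\subseteq\mathcal{L}$. Therefore $\mathcal{K}+\mathcal{L}=\mathcal{L}=\mathrm{Ker}\,\sigma$, and $mc_{0}(V)=V^{\oplus N}/\mathrm{Ker}\,\sigma$.

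Next I would exhibit the isomorphism. The map $\sigma$ induces an injective linear map $\bar{\sigma}\colon V^{\oplus N}/\mathrm{Ker}\,\sigma\to V$ with image $\sum_{j=1}^{N}\mathrm{Im}\,B_{j}$, and it intertwines the induced actions: for a class $[\mathbf{v}]$ one has $\bar{G}_{i}[\mathbf{v}]=[\iota_{i}\sigma(\mathbf{v})]$, whence $\bar{\sigma}(\bar{G}_{i}[\mathbf{v}])=\sigma(\iota_{i}(\sigma(\mathbf{v})))=B_{i}\,\sigma(\mathbf{v})=B_{i}\,\bar{\sigma}([\mathbf{v}])$, using $\sigma\circ\iota_{i}=B_{i}$; this is the intertwining relation of Definition \ref{defisom}(i) with $\phi=\bar{\sigma}$. (That each $G_{i}$ preserves $\mathcal{K}+\mathcal{L}=\mathrm{Ker}\,\sigma$, so that $\bar{G}_{i}$ is well defined, is immediate from $G_{i}=\iota_{i}\circ\sigma$ and is a case of Proposition \ref{propinvariant}.) Hence, once $\bar{\sigma}$ is shown to be surjective, it is the required isomorphism $(mc_{0}(V),mc_{0}(\mathbf{B}))\cong(V,\mathbf{B})$, with inverse induced by any linear section of $\sigma$.

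The surjectivity of $\bar{\sigma}$, that is $\sum_{j=1}^{N}\mathrm{Im}\,B_{j}=V$, is the only place where the assumptions on $(V,\mathbf{B})$ are genuinely used, and I expect this to be the main obstacle. This equality is exactly the $\tau=0$ case of the image condition $(\ast\ast)$ of Definition \ref{defisom}(iv) — equivalently, the kernel condition $(\ast)$ applied to the transposed tuple $({}^{\mathrm{T}}B_{1},\dots ,{}^{\mathrm{T}}B_{N})$, since a nonzero functional annihilating $\sum_{j}\mathrm{Im}\,B_{j}$ would lie in $\bigcap_{j}\mathrm{Ker}\,{}^{\mathrm{T}}B_{j}$. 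Granting it, $\bar{\sigma}$ is an isomorphism and the proposition follows. A by-product of this argument is that for $\lambda=0$ the spaces $\mathcal{K}$ and $\mathcal{L}$ do not genuinely intervene: the entire content is the identification $V^{\oplus N}/\mathrm{Ker}\,\sigma\cong V$ furnished by $\sigma$.
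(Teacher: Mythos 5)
Your structural analysis is fine and is the natural route: at $\lambda=0$ one has $q^{\lambda}=1$, $[\lambda]=0$, so $G_i=\iota_i\circ\sigma$ with $\sigma(v_1,\dots,v_N)=\sum_j B_jv_j$, hence $\mathcal{L}=\mathrm{Ker}\,\sigma$, $\mathcal{K}\subseteq\mathcal{L}$, $mc_0(V)=V^{\oplus N}/\mathrm{Ker}\,\sigma$, and $\bar\sigma$ is an injective intertwiner onto $\sum_j\mathrm{Im}\,B_j$. The problem is the step you explicitly ``grant'': the surjectivity $\sum_j\mathrm{Im}\,B_j=V$ is the entire content of the proposition, and it does \emph{not} follow from $(\ast)$. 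Condition $(\ast)$ only controls kernels (its $\tau=0$ instance gives $\bigcap_j\mathrm{Ker}\,B_j=\{0\}$, which is the \emph{transposed} statement of what you need), and your two suggested justifications --- the $\tau=0$ case of $(\ast\ast)$, or $(\ast)$ for the transposed tuple --- are hypotheses that are not assumed. Concretely, take $N=2$, $V=\mathbb{C}^2$, $B_1=\left[\begin{smallmatrix}1&0\\0&0\end{smallmatrix}\right]$, $B_2=\left[\begin{smallmatrix}1&1\\0&0\end{smallmatrix}\right]$: the only nontrivial kernels $\mathrm{Ker}(B_1+\tau)$ are $\mathbb{C}e_1$ ($\tau=-1$) and $\mathbb{C}e_2$ ($\tau=0$), neither meeting $\mathrm{Ker}\,B_2=\mathbb{C}\,{}^{\mathrm{T}}(1,-1)$, and similarly for the roles interchanged, so $(\ast)$ holds; yet $\mathrm{Im}\,B_1+\mathrm{Im}\,B_2=\mathbb{C}e_1$, so by your own computation $mc_0(V)$ is one-dimensional and cannot be isomorphic to $(V,\mathbf{B})$ in the sense of Definition \ref{defisom}(i). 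So the gap is genuine and cannot be closed from $(\ast)$ alone in the bare linear-algebra reading of the statement.

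Note also that the paper itself does not prove this proposition: it is quoted from [AT, Proposition 5.7], and the remark following the definition of $mc_\lambda$ stresses that in Arai--Takemura's formulation the point $x=0$ is always counted among the singularities --- i.e.\ the bookkeeping (and the effective content of the hypotheses) differs exactly at the place where your argument stalls. In the situations where the present paper actually invokes these facts, both $(\ast)$ and $(\ast\ast)$ are assumed (see the standing assumption before Proposition \ref{propbraid} and the hypotheses of Theorem \ref{thmcompose}), and there one even has $A_1+A_2+A_3$ equal to a nonzero scalar matrix, so $\sum_j\mathrm{Im}\,A_j=V$ is automatic; under the $\tau=0$ case of $(\ast\ast)$ your map $\bar\sigma$ is onto and your argument does go through verbatim. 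As written, however, your proof establishes the proposition only under the additional image condition, not from $(\ast)$ as stated; to repair it you must either import $(\ast\ast)$ (or the extra structure of [AT]'s setting) or prove $\sum_j\mathrm{Im}\,B_j=V$, and the example above shows the latter is impossible from $(\ast)$ alone.
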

	\begin{prop}[{\cite[Theorem 5.9]{AT}}]\label{propirr}
		If $(V,\mathbf{B})$ is irreducible, then $(mc_\lambda(V),mc_{\lambda}(\mathbf{B}))$ is irreducible or $V=\{0\}$.
	\end{prop}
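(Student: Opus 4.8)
The plan is to reproduce the Dettweiler--Reiter argument for the middle convolution of Fuchsian systems \cite{DR1,DR2}, in the $q$-difference form of Arai--Takemura \cite{AT}: since our $c_\lambda$, and hence $\mathcal{K}$, $\mathcal{L}$ and the $G_i$, agree with \cite{AT} except in the treatment of the point $x=0$ --- which plays no role in this purely linear-algebraic statement --- the same proof applies. Write $W=\mathbb{C}^{MN}=V_1\oplus\cdots\oplus V_N$ with $V_i\cong V$ the $i$-th block, and let $e_i\colon V\to W$ and $p_i\colon W\to V$ be the block inclusion and projection. A proper nonzero $\bar{\mathbf{G}}$-invariant subspace of $W/(\mathcal{K}+\mathcal{L})$ is the same thing as a $\mathbf{G}$-invariant $U$ with $\mathcal{K}+\mathcal{L}\subseteq U\subseteq W$ and $U\neq\mathcal{K}+\mathcal{L},W$. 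Hence it suffices to prove: every $\mathbf{G}$-invariant $U\subseteq W$ satisfies $U\subseteq\mathcal{K}+\mathcal{L}$ or $U+\mathcal{K}+\mathcal{L}=W$; together with disposing of the two degeneracies $\mathcal{K}+\mathcal{L}=W$ (which is precisely the exceptional conclusion, $mc_\lambda(V)=\{0\}$, and happens e.g.\ at resonant $\lambda$) and $\dim\bigl(W/(\mathcal{K}+\mathcal{L})\bigr)\leq1$ (irreducible for trivial reasons).

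For the dichotomy I would first record the facts forced by the shape \eqref{Gi}: $\operatorname{Im}G_i\subseteq V_i$; for $w\in V$ one has $G_j(e_iw)=e_j(q^\lambda B_iw)+\delta_{ij}[\lambda]\,e_iw$; and $\sum_iG_i=[\lambda]\operatorname{id}_W+\Delta\circ\bigl(q^\lambda\sum_lB_lp_l\bigr)$ with $\Delta\colon V\to W$ the diagonal embedding. Thus $\mathcal{K}=\bigoplus_i\operatorname{Ker}B_i$ is supported blockwise, while (for $[\lambda]\neq0$) $\mathcal{L}=\operatorname{Ker}\!\bigl(\sum_iG_i\bigr)$ consists of the diagonal vectors $\Delta(w)$ with $w\in\operatorname{Ker}\!\bigl([\lambda]\operatorname{id}_V+q^\lambda\sum_lB_l\bigr)$. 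Now let $U$ be $\mathbf{G}$-invariant with $U\not\subseteq\mathcal{K}$; picking $v\in U\setminus\mathcal{K}$ we get $G_iv\in U\cap V_i$ nonzero for some $i$, and by propagating this with the relations above one extracts a nonzero $\mathbf{B}$-invariant subspace of $V$ (assembled from the $p_j(U)$ and the $\operatorname{Im}B_j$). Irreducibility of $(V,\mathbf{B})$ forces it to equal $V$, and unwinding the block relations then yields $U+\mathcal{K}+\mathcal{L}=W$; the case $U\subseteq\mathcal{K}\subseteq\mathcal{K}+\mathcal{L}$ is immediate. The resonant case $[\lambda]=0$ is separate: there the induced $\bar{\mathbf{G}}$ on $W/(\mathcal{K}+\mathcal{L})$ is conjugate to $\mathbf{B}$ when $\sum_i\operatorname{Im}B_i=V$, and otherwise $\sum_i\operatorname{Im}B_i$, being $\mathbf{B}$-invariant, is $\{0\}$, forcing $\dim V\leq1$; either way irreducibility is immediate. (One may note along the way that an irreducible $(V,\mathbf{B})$ with $\dim V\geq2$ automatically satisfies $(\ast)$ and $(\ast\ast)$, so that the only genuinely low-dimensional cases are $\dim V\leq1$, where the claim is trivial.)

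The main obstacle is this extraction step: one must identify the correct $\mathbf{B}$-stable subspace of $V$ cut out of $U$, verify its $\mathbf{B}$-invariance (this is where the off-diagonal $q^\lambda B_i$ in $G_j$ is used), and show that when it is all of $V$ the vector $U$ already fills a complement of $\mathcal{K}+\mathcal{L}$ (using the block-support of $\mathcal{K}$ and the diagonal description of $\mathcal{L}$). Closely related is the need to check that the $(\ast)$- and $(\ast\ast)$-type nondegeneracy conditions propagate to the sub- and quotient modules one manipulates, and to enumerate the few degenerate configurations --- $q^\lambda=1$, $\dim V\leq1$, and $\mathcal{K}+\mathcal{L}$ equal to $W$ or of corank $1$ --- which is exactly where the ``or $V=\{0\}$'' escape in the statement is needed. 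None of this involves the integral path $C$ or the point $x=0$, so the result is inherited unchanged from \cite{AT}.
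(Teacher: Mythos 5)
Your proposal takes exactly the paper's route: the paper offers no independent proof of Proposition \ref{propirr}, but simply cites \cite[Theorem 5.9]{AT} and notes (in the remark following the definition of $mc_\lambda$) that the only deviation from Arai--Takemura, namely the treatment of the point $x=0$, is irrelevant to such purely linear-algebraic statements --- which is precisely your justification. Your supplementary Dettweiler--Reiter-style sketch is also sound in outline (one minor slip: to produce a nonzero $G_iv\in U\cap V_i$ you should take $v\in U\setminus\mathcal{L}$ rather than $v\in U\setminus\mathcal{K}$, since $\bigcap_i\mathrm{Ker}\,G_i\subseteq\mathcal{L}$), so there is nothing essential to correct.
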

	\begin{thm}[{\cite[Theorem 5.8]{AT}}]\label{thmcompose}
		Suppose that $(V,\mathbf{B})$ satisfies the conditions $(\ast)$ and $(\ast\ast)$.
		\begin{itemize}
			\item[(i)] $(mc_{\lambda_2}(mc_{\lambda_1}(V)),mc_{\lambda_2}(mc_{\lambda_1}(\mathbf{B})))$ is isomorphic to $(mc_{\lambda_1+\lambda_2}(V),mc_{\lambda_1+\lambda_2}(\mathbf{B}))$ for any $\lambda_1,\lambda_2\in\mathbb{C}$.
			\item[(ii)] $(mc_{-\lambda}(mc_{\lambda}(V)),mc_{-\lambda}(mc_{\lambda}(\mathbf{B})))$ is isomorphic to $(V,\mathbf{B})$ for any $\lambda\in\mathbb{C}$.
		\end{itemize}
	\end{thm}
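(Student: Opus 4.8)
The plan is to deduce part (i) by running the Dettweiler--Reiter composition argument in the present $q$-setting, and then to read off part (ii) as a formal consequence.

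Part (ii) follows from part (i). By Proposition~\ref{proppreserve} the tuple $mc_\lambda(\mathbf{B})$ again satisfies $(\ast)$ and $(\ast\ast)$, so Proposition~\ref{propmc0} applies to $(mc_\lambda(V),mc_\lambda(\mathbf{B}))$; combining this with part (i) for $\lambda_1=\lambda$, $\lambda_2=-\lambda$ and with Proposition~\ref{propmc0} applied to $(V,\mathbf{B})$ gives
\[
(mc_{-\lambda}(mc_\lambda(V)),mc_{-\lambda}(mc_\lambda(\mathbf{B})))\cong(mc_0(V),mc_0(\mathbf{B}))\cong(V,\mathbf{B}).
\]
Thus the real content is part (i).

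For part (i), write $\bar{\mathbf{G}}=mc_{\lambda_1}(\mathbf{B})$, obtained from $\mathbf{G}=c_{\lambda_1}(\mathbf{B})$ on $\mathbb{C}^{MN}$ by passing to the quotient by $\mathcal{K}^{(1)}+\mathcal{L}^{(1)}$ (the invariant subspaces of Proposition~\ref{propinvariant} for $\mathbf{G}$), and similarly let $\bar{\mathbf{F}}=mc_{\lambda_1+\lambda_2}(\mathbf{B})$ come from $\mathbf{F}=c_{\lambda_1+\lambda_2}(\mathbf{B})$ on $\mathbb{C}^{MN}$ via $\mathcal{K}+\mathcal{L}$. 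I would proceed in three steps. First, lift the composition to the unquotiented level: since the convolution $c_{\lambda_2}$ of a quotient tuple equals the quotient of the convolution (the convolution matrices are assembled blockwise and $\mathcal{K}^{(1)}+\mathcal{L}^{(1)}$ is $G_i$-invariant in each slot), the tuple $mc_{\lambda_2}(mc_{\lambda_1}(\mathbf{B}))$ is the subquotient of $c_{\lambda_2}(c_{\lambda_1}(\mathbf{B}))$ acting on $\mathbb{C}^{MN^2}$ (blocks indexed by pairs $(i,j)$) cut out by the pull-back of its own $\mathcal{K}^{(2)}+\mathcal{L}^{(2)}$. Second, exhibit an explicit invertible block matrix $P$ on $\mathbb{C}^{MN^2}$, independent of $\mathbf{B}$ and of summation/difference type in the inner index, which conjugates $c_{\lambda_2}(c_{\lambda_1}(\mathbf{B}))$ into block-upper-triangular form: one diagonal block is a gauge twist of $c_{\lambda_1+\lambda_2}(\mathbf{B})$ on a distinguished copy of $\mathbb{C}^{MN}$, and the remaining diagonal blocks are degenerate tuples whose non-zero entries are scalar multiples of the identity assembled from the numbers $[\lambda_i]$; the identities $[\lambda_1+\lambda_2]=[\lambda_1]+q^{\lambda_1}[\lambda_2]$ and $q^{\lambda_1+\lambda_2}=q^{\lambda_1}q^{\lambda_2}$ make this bookkeeping consistent. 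Third, chase subspaces through $P$: check that $P$ sends the pulled-back $\mathcal{K}^{(2)}+\mathcal{L}^{(2)}$ onto the direct sum of all the degenerate blocks with the copy of $\mathcal{K}+\mathcal{L}$ sitting in the distinguished $\mathbb{C}^{MN}$, so that $P$ descends to the desired isomorphism $\phi$ intertwining $mc_{\lambda_2}(mc_{\lambda_1}(\mathbf{B}))$ with $mc_{\lambda_1+\lambda_2}(\mathbf{B})$.

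I expect the third step to be the main obstacle. The subspace chase only yields inclusions a priori; to upgrade them to equalities one needs that no kernels or images degenerate after one convolution, i.e.\ that $\dim\mathrm{Ker}(B_j+\tau)$ and the corresponding cokernel dimensions behave as expected. This is exactly what $(\ast)$ and $(\ast\ast)$ provide, and since they are preserved by $mc_{\lambda_1}$ (Proposition~\ref{proppreserve}) the same control is available at the second layer; the dimension count then forces the inclusions to be equalities. One should also verify that the modified treatment of the point $x=0$ in the present reformulation does not disturb these counts---it only changes which data one records as a ``singularity'', not the linear algebra of the matrices $G_i$---after which the explicit verification of the form of $P$ and of the block computation in the second step is routine but lengthy. (An analytic shadow of the argument---composing the two $q$-Euler transforms $E_{\lambda_2;C_2}\circ E_{\lambda_1;C_1}$ and collapsing the iterated Jackson integral by a $q$-beta integral identity---explains why $c_{\lambda_1+\lambda_2}$ should appear, but it does not handle the quotient by itself, so the algebraic route above is the one to carry out.)
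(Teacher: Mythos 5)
The paper itself offers no proof of this theorem: it is imported verbatim from Arai--Takemura as \cite{AT}, Theorem 5.8, the only justification being the earlier remark that the present reformulation differs from theirs solely in the treatment of the point $x=0$, so their argument applies unchanged. Your sketch is therefore not in conflict with the paper's approach --- what you outline is essentially the Dettweiler--Reiter composition scheme that underlies the cited proof, and your derivation of (ii) from (i) together with Propositions \ref{proppreserve} and \ref{propmc0} (via $mc_{-\lambda}\circ mc_\lambda \cong mc_0 \cong \mathrm{id}$) is sound and is the expected route.

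As a standalone proof, however, there is a genuine gap: your second and third steps are the entire content of part (i) and are announced rather than executed. You do not construct the conjugating matrix $P$ on $\mathbb{C}^{MN^2}$, do not verify the block-triangularization in which one diagonal block is a twist of $c_{\lambda_1+\lambda_2}(\mathbf{B})$, and do not carry out the subspace chase identifying the pulled-back $\mathcal{K}^{(2)}+\mathcal{L}^{(2)}$ with the degenerate blocks plus the copy of $\mathcal{K}+\mathcal{L}$; this identification is precisely where $(\ast)$ and $(\ast\ast)$ enter quantitatively, through dimension counts of $\mathrm{Ker}(B_j+\tau)$ and of the corresponding images, and you only gesture at it. Two further points are left untreated: the claim in your first step that convolution commutes with passing to the quotient tuple is itself a lemma requiring proof, and the bookkeeping you rely on degenerates exactly in the case needed for (ii), namely $\lambda_1+\lambda_2=0$ (and more generally $\lambda_i=0$), where $[\lambda_1+\lambda_2]=0$ changes the shape of the subspace $\mathcal{L}$ for the composite convolution, so that case must be argued separately. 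In short, your proposal is a correct plan --- essentially the one carried out in \cite{AT} --- but not yet a proof.
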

	\begin{rem}
		Proposition \ref{propirr} is not used directly in this paper.
	\end{rem}
	
	The $q$-middle convolution is a transformation of a $q$-difference system in the following form:
	\begin{align}
		D_xy=\sum_{i=1}^N \frac{B_i}{x-t_i}\quad(\mbox{$B_i$ : $M\times M$ constant matrix}).
	\end{align}
	In the differential case, there is another transformation, so called addition.
	The addition is a transform associated with $y\mapsto y (x-t_j)^\lambda$.
	We give the addition of $q$-difference systems although this is not used in this paper.
	We consider the gauge transformation $\displaystyle y \mapsto y'=y\frac{(x/t_j')_\infty}{(x/t_j)_\infty}$ $(t_j\neq 0)$ for the above equation.
	Due to simple calculation, we find that $y'$ satisfies the following system:
	\begin{align}\label{formeq}
		D_xy'=\left(\sum_{i\neq j}\frac{B_i'}{x-t_i}+\frac{B_j'}{x-t_j'}\right)y',
	\end{align}
	where
	\begin{align}
		&B_i'=\frac{t_j'}{t_j}\frac{t_i-t_j}{t_i-t_j'}B_i,\\
		&B_j'=\sum_{i\neq j}\frac{t_j'}{t_j}\frac{t_j'-t_j}{t_j'-t_i}B_i+\frac{t_j'}{t_j}B_j+\frac{t_j-t_j'}{t_j(1-q)}I.
	\end{align}
	\begin{defn}[{\cite[section 2]{AT}}]
		We define the addition as
		\begin{align}
			(t_1,\ldots,t_N)\mapsto(t_1,\ldots,t_j',\ldots,t_N)\quad \mbox{and}\quad (B_1,\ldots,B_N)\mapsto (B_1',\ldots,B_N').
		\end{align}
	\end{defn}
	\begin{rem}
		We put $t_j'=q^\alpha t_j$.
		Taking the classical limit $q\to1$, the matrices $B_i'$ ($i\neq i$) and $B_j'$ tend to
		\begin{align}
			B_i'\to B_i,\quad B_j'\to B_j+\alpha I.
		\end{align}
		This transform is the addition on the differential case.
		We remark that the gauge factor $\dfrac{(x/t_j')_\infty}{(x/t_j)_\infty}$ tends to $(1-x/t_j)^\alpha$.
	\end{rem}
	\begin{rem}
		When $t_j=0$, we should consider the gauge transformation $y\mapsto y x^\alpha$ as an analog of the differential case.
		The equation \eqref{formeq} is transformed by this gauge to
		\begin{align}
			B_i\mapsto q^\alpha B_i\ (i\neq j),\quad B_j\mapsto q^\alpha B_j+[\alpha] I.
		\end{align} 
	\end{rem}
	
	\section{A $3\times3$ system}\label{secE8}
	In this section, we introduce a $q$-difference system of rank $3$, which has  the symmetry of the affine Weyl group of type $E_8^{(1)}$.
	We show that the generators of this affine Weyl group are given by permutations of parameters and the $q$-middle convolution.
	
	In this section we consider the following system:
	\begin{align}
		\label{defeq}&T_x y=Ay,\\
		\label{defA}&A=A(x)=I+ A^{(1)}x + A^{(2)}x^2 +\kappa  Ix^3\ : \ 3\times 3\mbox{ matrix},\\
		\label{detA}&\det A=\kappa^3 \prod_{i=1}^9(x+e_i)\quad(e_i\neq e_j\ \mbox{for any $i\neq j$}).
	\end{align}
	The condition
	\begin{align}
		\kappa^3 \prod_{i=1}^9 e_i=1,
	\end{align}
	is needed since $A(0)=I$.
	Due to \eqref{detA}, there are $8$ constraint conditions for components of $A_1$ and $A_2$.
	The equation \eqref{defeq} also has the $GL(3)$ gauge freedom.
	In conclusion the equation \eqref{defeq} essentially has two accessory parameters.
	
	We first give an example of expression of  $A$.
	If matrices $X_1$, $X_2$, $X_3$ satisfy
	\begin{align}
		\label{XXX}&X_1X_2X_3=\kappa I,\\
		\label{detX1}&\det(I+x X_1)=(1+x/e_1)(1+x/e_2)(1+x/e_3),\\
		\label{detX2}&\det(I+x X_2)=(1+x/e_4)(1+x/e_5)(1+x/e_3),\\
		\label{detX3}&\det(I+x X_3)=(1+x/e_1)(1+x/e_2)(1+x/e_3),
	\end{align}
	then $A=(I+X_1)(I+X_2)(I+X_3)$ satisfies the conditions \eqref{defA} and \eqref{detA}.
	We can transform $X_1$ and $X_3$ simultaneously as
	\begin{align}
		\label{XXtrig}X_1=\begin{bmatrix}
			1/e_1&a_1&a_2\\
			0&1/e_2&a_3\\
			0&0&1/e_3
		\end{bmatrix},\ X_3=\begin{bmatrix}
			1/e_7&0&0\\
			a_4&1/e_7&0\\
			a_5&a_6&1/e_9
		\end{bmatrix},
	\end{align}
	by a suitable $GL(3)$ gauge transformation.
	We put 
	\begin{align}\label{X2trig}
		X_2=\kappa X_1^{-1}X_3^{-1}.
	\end{align}
	The condition 
	\begin{align}
		\label{trigdetX2}\det(I+X_2)=(1+x/e_4)(1+x/e_5)(1+x/e_3)
	\end{align} 
	can be solved by putting $a_1$ and $a_2$ as suitable rational functions on parameters and $a_3$, $a_4$, $a_5$, $a_6$.
	The gauge transformation by diagonal matrices preserves the forms of $X_1$ and $X_3$.
	This means that $T_xy=(I+X_1)(I+X_2)(I+X_3)y$ has two dimensional gauge freedom.
	Therefore only two parameters (accessory parameters) are independent from \eqref{XXX}, \eqref{detX1}, \eqref{detX2} and \eqref{detX3}.
	The equation 
	\begin{align}
		\label{trigeq}T_xy=(I+X_1)(I+X_2)(I+X_3)y,
	\end{align} 
	where $X_1$, $X_2$ and $X_3$ are given in \eqref{XXtrig}, \eqref{X2trig} and \eqref{trigdetX2}, is used in Section \ref{secsc}.
	
	We characterize the equation \eqref{defeq}.
	\begin{defn}[{\cite[Definition 3.1, 3.2, 3.3]{SY}}]
		We define a spectral type $\mathcal{S}=(\mathcal{S}_0;\mathcal{S}_\infty;\mathcal{S}_{\mathrm{div}})$ of the equation
		\begin{align}
			T_x{y}=A(x) y=\sum_{k=0}^N A_k x^k y\quad (\mbox{$A_i$: $M\times M$ matrix}),
		\end{align}
		as follows.
		\begin{itemize}
			\item[1.] Let
			\begin{align}
				A_0\sim \bigoplus_{i=1}^l \bigoplus_{j=1}^{s_i}J(\alpha_i,t_{i,j})\quad (\mbox{$J(\alpha,t)$: Jordan cell, }t_{i,j}\geq t_{i,j+1}).
			\end{align}
			be a Jordan decomposition of $A_0$ and $m_i=(m_{i,1},m_{i,2},\dots,m_{i,t_{i,1}})$ be the conjugate Young diagram of $t_i=(t_{i,1},t_{i,2},\dots,t_{i,s_i})$.
			We put
			\begin{align}
				\mathcal{S}_0:m_{1,1}\ldots m_{1,t_{1,1}}, m_{2,1}\ldots m_{2,t_{2,1}},\ldots,m_{l,1}\ldots m_{l,t_{l,1}}.
			\end{align}
			We also define $\mathcal{S}_\infty$ in the same way as replacing $A_0$ by $A_N$.
			\item[2.] We set $Z_A=\{a\in\mathbb{C}\mid \det A(a)=0\}=\{a_1,\ldots,a_l\}$.
			Let $d_k$ $(1\leq k\leq M,\ d_{k+1}|d_k)$ be the elementary divisors of $\det A(x)$.
			For $a_i\in Z_A$, let $\tilde{n}_{k}^i$ be the order of zeros of $a_i$ of $d_k$.
			We set $n^i=(n_1^i,\ldots,n_{\tilde{n}_1^i}^i)$ as the conjugate Young diagram of $\tilde{n}^i=(\tilde{n}_1^i,\ldots,\tilde{n}_{M}^i)$.
			We put
			\begin{align}
				\mathcal{S}_{\mathrm{div}}:n_1^1,\ldots,n_{\tilde{n}_1^1}^1,n_1^2,\ldots,n_{\tilde{n}_1^2}^2,\ldots,n_1^l,\ldots,n_{\tilde{n}_1^l}^l.
			\end{align}
		\end{itemize}
	\end{defn}
	We characterize the equation \eqref{defeq} by its spectral type.
	\begin{prop}
		Suppose that the spectral type of the equation
		\begin{align}
			T_x{y}=A(x) y,
		\end{align}
		is given by 
		\begin{align}\label{spectral}
			(3;3;1,1,1,1,1,1,1,1,1).
		\end{align}
		Then the matrix $A(x)$ satisfies the following conditions:
		\begin{align}
			&A(x)=A^{(0)}+A^{(1)} x+A^{(2)} x^2+A^{(3)} x^3\quad (\mbox{$A^{(i)}$; $3\times 3$ matrix}),\\
			&A^{(0)}=a I,\ A^{(3)}=b I,\\
			&\det A(x)=b^3\prod_{i=1}^9 (x+c_i),
		\end{align}
		where $a$, $b$, $c_i\in\mathbb{C}^\times$ are suitable parameters such that
		\begin{align}
			c_i\neq c_j\ (i\neq j),\quad b^3\prod_{i=1}^9 c_i=a^3.
		\end{align}
		In particular, the equation \eqref{defeq} is characterized by the above spectral type.
	\end{prop}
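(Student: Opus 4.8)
The plan is to unwind the definition of the spectral type and read off each of the three pieces of data in turn. First I would analyze $\mathcal{S}_0 = 3$: since $A_0 = A^{(0)}$ has a single block of "size data" $3$ in the notation, its Jordan decomposition consists of exactly one eigenvalue whose conjugate Young diagram is the single row $(3)$, which forces $A^{(0)}$ to be $\alpha$ times a single Jordan cell with that one eigenvalue and partition $(1,1,1)$ — i.e. $A^{(0)} = aI$ for some $a\in\mathbb{C}$. The same argument applied to $\mathcal{S}_\infty = 3$ gives $A^{(N)} = bI$. The constraint $A^{(0)}$ invertible (forced by the appearance of $a$ among the "parameters in $\mathbb{C}^\times$") shows $N=3$ is the relevant degree, so $A(x) = aI + A^{(1)}x + A^{(2)}x^2 + bI x^3$ with $a,b\neq 0$.

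Next I would read off $\mathcal{S}_{\mathrm{div}} = 1,1,1,1,1,1,1,1,1$ (nine ones). By definition, the elementary divisors $d_k$ of the matrix $A(x)$ over $\mathbb{C}[x]$ satisfy $d_3 \mid d_2 \mid d_1$, and $\det A(x) = d_1 d_2 d_3$ up to a constant. A zero $a_i$ of $\det A(x)$ contributes to $\mathcal{S}_{\mathrm{div}}$ the conjugate partition of the vector of vanishing orders $(\tilde n_1^i, \tilde n_2^i, \tilde n_3^i)$ of $a_i$ in $(d_1, d_2, d_3)$; a contribution consisting of a single $1$ means $\tilde n_1^i = 1$ and $\tilde n_2^i = \tilde n_3^i = 0$, i.e. $a_i$ is a simple zero of $\det A(x)$ appearing only in $d_1$. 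Nine such contributions therefore say that $\det A(x)$ has exactly nine distinct simple zeros $-c_1, \dots, -c_9$ and no higher multiplicities; in particular $\deg \det A(x) = 9$, consistent with the leading term $b^3 x^9$ coming from $A^{(3)} x^3 = bIx^3$. Hence $\det A(x) = b^3 \prod_{i=1}^9 (x + c_i)$ with the $c_i$ pairwise distinct, and evaluating at $x=0$ gives $a^3 = \det A(0) = \det(aI) = b^3\prod_{i=1}^9 c_i$, which also forces each $c_i \neq 0$.

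Conversely, any matrix $A(x)$ of the stated form manifestly has $A^{(0)} = aI$ and leading coefficient $bI$, each a scalar matrix with Jordan type $(1,1,1)$ and hence $\mathcal{S}_0 = \mathcal{S}_\infty = 3$; and $\det A(x) = b^3\prod(x+c_i)$ with distinct roots has nine simple linear factors all dividing $d_1$ only, so $\mathcal{S}_{\mathrm{div}} = 1,\dots,1$ (nine terms). Thus the spectral type $(3;3;1^9)$ is equivalent to the displayed normal form, and after the rescalings $x \mapsto x$, $y \mapsto$ (gauge) bringing $a = 1$ one recovers exactly \eqref{defA}–\eqref{detA}; this establishes the last assertion that \eqref{defeq} is characterized by the spectral type. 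The only mildly delicate point — the step I expect to need the most care — is the translation between "elementary divisors of the matrix polynomial $A(x)$" and "multiplicities of the roots of $\det A(x)$": one must check that the spectral-type condition $\mathcal{S}_{\mathrm{div}} = 1^9$ genuinely rules out, say, a double root carried by $d_1$ alone (which would give a single "$2$" in $\mathcal{S}_{\mathrm{div}}$, not two "$1$"s) as well as a root shared by $d_1$ and $d_2$, so that simplicity of all nine zeros is forced rather than merely permitted.
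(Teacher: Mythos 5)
Your overall route is the same as the paper's: $\mathcal{S}_0=\mathcal{S}_\infty=3$ forces $A^{(0)}=aI$ and top coefficient $A^{(N)}=bI$, the divisor part forces $\det A(x)=b^3\prod_{i=1}^9(x+c_i)$ with distinct nonzero roots, the degree count then gives $N=3$, and a gauge transformation normalizing $a=1$ (the paper uses $y\mapsto x^\alpha y$ with $q^\alpha=a$, which replaces $A$ by $a^{-1}A$) recovers \eqref{defeq} with $\kappa=b/a$. Two points in your write-up need repair. First, the justification of $N=3$ in your opening paragraph (``$A^{(0)}$ invertible shows $N=3$'') is not an argument; the actual deduction, which you relegate to a consistency remark, is that nine simple zeros give $\deg\det A(x)=9$, while invertibility of the top coefficient $bI$ gives $\deg\det A(x)=3N$, hence $N=3$.

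Second, and more substantively, the elementary-divisor dictionary in your closing ``delicate point'' is backwards. With $\tilde n^i$ the orders of $a_i$ in $(d_1,d_2,d_3)$ and $n^i$ its conjugate partition, a double root carried by $d_1$ alone has $\tilde n^i=(2,0,0)$ and contributes the conjugate $(1,1)$, i.e. two $1$'s --- not a single ``$2$'' as you claim; it is a root shared by $d_1$ and $d_2$, $\tilde n^i=(1,1,0)$, that contributes ``$2$''. Consequently, a higher-order zero lying only in $d_1$ is \emph{not} ruled out by the multiset of entries of $\mathcal{S}_{\mathrm{div}}$ (nine $1$'s could a priori arise from fewer roots of types $(1,1)$, $(1,1,1)$, etc.); what rules it out is the Sakai--Yamaguchi convention, used implicitly in the paper, that the list records one partition per element of $Z_A$, so that nine entries ``$1$'' means nine distinct roots each with partition $(1)$. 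Your main-line sentence (``a contribution consisting of a single $1$ means a simple zero appearing only in $d_1$'') is fine under that reading, but the verification you propose in the last paragraph would be checking a false statement, so the delicate step should instead invoke the per-root grouping of the spectral type. (A smaller analogous remark: $a\neq0$ is not ``forced by the appearance of $a$ among the parameters in $\mathbb{C}^\times$'' --- that is circular; it is part of the standing assumption that $A_0$ and $A_N$ are invertible in this framework, which the paper also takes for granted.)
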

	\begin{proof}
		By $\mathcal{S}_0$ and $\mathcal{S}_\infty$, we find that the matrix
		\begin{align}
			A(x)=\sum_{k=0}^N A^{(k)} x^k,
		\end{align}
		is a $3\times 3$ matrix and there are $a$, $b\in\mathbb{C}^\times$ such that
		\begin{align}
			A^{(0)}= a I,\ A^{(N)}=bI.
		\end{align}
		Due to $\mathcal{S}_{\mathrm{div}}$, there are distinct $c_1,\ldots,c_9\in\mathbb{C}^\times$ such that
		\begin{align}
			\det A(x)=b^3\prod_{i=1}^9 (x+c_i).
		\end{align} 
		Therefore we find $N=3$.
		This completes the proof of conditions of $A(x)$.
		Applying the gauge transformation by $x^\alpha$ where $q^\alpha=a$, the equation $T_x{y}=A(x) y$ is transformed to
		\begin{align}
			T_x y =\frac{1}{a}A(x) y.
		\end{align}
		This equation is equivalent to \eqref{defeq} with $c_i=e_i$ and $\dfrac{b}{a}=\kappa$.
	\end{proof}
	\begin{rem}
		The spectral type $\mathcal{S}_0=3$ (resp. $\mathcal{S}_\infty=3$) means that the point $x=0$ (resp. $x=\infty$) is regular up to gauge transformations.
		The spectral type $\mathcal{S}_\mathrm{div}=1,1,1,1,1,1,1,1,1$ may mean that there are $3$ singularities on $\mathbb{C}^\times$ for the classical limit $q\to1$ of the equation.
		In particular, the equation \eqref{defeq} can be regarded as a democratic $q$-analog of the rank $3$ Fuchsian differential equation \cite{Boalch} related to the additive Painlev\'e equation of type $E_6^{(1)}$.
	\end{rem}

	It is obvious that the symmetric group $\mathfrak{S}_9$ acts on the equation \eqref{defeq} by
	\begin{align}
		\sigma\in\mathfrak{S}_9:e_i\mapsto e_{\sigma(i)}.
	\end{align}
	The following theorem is crucial to construct the affine Weyl group symmetry of type $E_8^{(1)}$.
	\begin{thm}\label{E8mc}
		We put $g=(-x/e_1)_\infty(-x/e_2)_\infty(-x/e_3)_\infty$ and $q^\lambda=(\kappa e_1 e_2 e_3)^{-1}$.
		We apply the following transformations to the equation \eqref{defeq} in order:
		\begin{itemize}
			\item[1.] the gauge transformation by $g$,
			\item[2.] $q$-middle convolution $mc_\lambda$,
			\item[3.] the gauge transformation by $g^{-1}$. 
		\end{itemize}
		Then the transformed equation is given below:
		\begin{align}
			&T_x y=\tilde{A}y,\\
			\label{Apro1}&\tilde{A}=I+ \tilde{A}^1x +  \tilde{A}^2 x^2+ \tilde{\kappa} Ix^3 \ :\ 3\times 3\mbox{ matrix},\\
			\label{Apro2}&\det \tilde{A}=\tilde{\kappa}^3 \prod_{i=1}^9 (x+\tilde{e}_i),
		\end{align}
		where
		\begin{align}
			\tilde{e}_i=\begin{cases}
				e_i&(i=1,2,3)\\
				\kappa e_1e_2e_3 e_i &(i=4,5,6,7,8,9)
			\end{cases},\quad \tilde{\kappa}=\frac{1}{\kappa(e_1e_2e_3)^2}.
		\end{align}
	\end{thm}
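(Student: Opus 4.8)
The plan is to run the three-step procedure (gauge by $g$, then $mc_\lambda$, then gauge by $g^{-1}$) explicitly on the first-order system $T_xy = Ay$, converting back and forth between the $T_x$-form and the $D_x$-form where necessary, and to track what happens to the spectral data $(\mathcal{S}_0;\mathcal{S}_\infty;\mathcal{S}_{\mathrm{div}})$, the leading coefficient $\kappa$, and the zeros $e_i$. Since the $q$-middle convolution as reconstructed in Section \ref{secqmc} acts on systems written in the $q$-Fuchsian form $D_xy=\sum_i \frac{B_i}{x-t_i}y$, the first task is to bring $T_xy=Ay$ into such a form. Writing $A=\prod_{i=1}^{9}(x+e_i)$-scaled data and using $D_x=\frac{1}{(1-q)x}(1-T_x)$, one gets $D_xy = \frac{1}{(1-q)x}(I-A)y$; because $A(0)=I$, the right-hand side is a rational matrix function with poles only at the zeros of $\det A$, i.e. at $x=-e_i$, so this is of the form \eqref{qFuchs} with $N=9$, $t_i=-e_i$, $M=1$ actually no — here $M=3$ and the $B_i$ are the residue matrices, which are rank $\le 1$ generically because each $x=-e_i$ contributes one vanishing direction of $\det A$ (this is exactly the content of $\mathcal{S}_{\mathrm{div}}=1,\dots,1$).

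The key computational steps, in order, are: (1) record the residue/rank data at each $x=-e_i$ and at $x=0,\infty$ for the original equation — in particular the exponent $\lambda$ is forced by $q^\lambda=(\kappa e_1e_2e_3)^{-1}$, which is precisely the product of the "local exponents" at $x=-e_1,-e_2,-e_3$ that are being removed by the preliminary gauge $g=(-x/e_1)_\infty(-x/e_2)_\infty(-x/e_3)_\infty$; (2) compute the effect of the gauge by $g$ on the residue matrices — by the $q$-analog of the addition discussed in Section \ref{secqmc} (the remarks following the definition of addition), multiplying $y$ by $(-x/e_j)_\infty^{\mp1}$ shifts the local data at $x=-e_j$ and $x=0$, effectively moving three of the singular points to $x=0$ or rescaling; (3) apply the formula for $mc_\lambda$ — Proposition giving \eqref{EulerFuchs}–\eqref{Gi} — to get the transformed residues $\bar G_i$, then pass to the quotient by $\mathcal{K}+\mathcal{L}$ (Proposition \ref{propinvariant}); (4) gauge back by $g^{-1}$ and reassemble the first-order matrix $\tilde A$ via $T_x = I-(1-q)xD_x$, checking that it again has the polynomial form \eqref{Apro1} of degree $3$ with leading coefficient $\tilde\kappa I$ and $\tilde A(0)=I$; (5) read off $\det\tilde A$ and match zeros: the six exponents at $x=-e_4,\dots,-e_9$ get multiplied by $q^\lambda$-type factors giving $\tilde e_i=\kappa e_1e_2e_3\, e_i$, while the three at $x=-e_1,-e_2,-e_3$ are restored by the outer gauge $g^{-1}$ and hence $\tilde e_i=e_i$ for $i=1,2,3$; the leading coefficient transforms multiplicatively to $\tilde\kappa=1/(\kappa(e_1e_2e_3)^2)$, which is consistent with the constraint $\tilde\kappa^3\prod\tilde e_i=1$ inherited from $\tilde A(0)=I$.

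The main obstacle I expect is controlling the rank-$3$-to-rank-$3$ reduction: a priori the $q$-convolution $c_\lambda$ inflates the system to rank $MN=27$, and one must verify that the quotient by $\mathcal{K}+\mathcal{L}$ brings it back down to rank $3$ with exactly the claimed spectral type $(3;3;1^9)$ and not some degenerate configuration. This requires a careful dimension count: $\dim\mathcal{K}=\sum_i\dim\mathrm{Ker}B_i = 9\cdot 2 = 18$ (each residue has corank $2$), and $\dim\mathcal{L}=\dim\mathrm{Ker}(G_1+\cdots+G_9)$ must be computed from the eigenvalue structure of $\sum B_i$ at $x=\infty$ — the $\mathcal{S}_\infty=3$ condition should force this to be $6$, leaving $27-18-6=3$. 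One must also check that after the quotient the new $\det\tilde A$ still has $9$ distinct simple zeros (genericity of $e_i$, plus the condition $q^\lambda\ne$ the relevant exponent ratios, which holds generically), and that the point $x=0$ stays regular — this is exactly where the present paper's modified treatment of $x=0$, as opposed to Arai–Takemura's, is essential, since in their convention $x=0$ would spuriously count as a tenth singularity and the spectral type would not close up. Once these bookkeeping facts are in place, the identification of $\tilde e_i$ and $\tilde\kappa$ is a direct substitution.
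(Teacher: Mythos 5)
The decisive problem is your very first step. You convert \eqref{defeq} directly to $D_x y=\frac{I-A}{(1-q)x}\,y$ and claim this coefficient is ``a rational matrix function with poles only at the zeros of $\det A$, i.e.\ at $x=-e_i$,'' giving a $q$-Fuchsian form \eqref{qFuchs} with $N=9$. That is false: $A$ in \eqref{defA} is a \emph{polynomial} matrix with $A(0)=I$, so $\frac{I-A}{(1-q)x}$ is a polynomial matrix of degree two; the zeros of $\det A$ are points where $A$ fails to be invertible, not poles of $A$, and no poles at $x=-e_i$ exist. Consequently the entire framework you build on this — nine singular points with corank-two residues, rank $MN=27$ after convolution, $\dim\mathcal{K}=18$, $\dim\mathcal{L}=6$, the reduction $27-18-6=3$ — refers to objects that are not there. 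The role of the preliminary gauge by $g$, which in the theorem (and in the paper's proof in Appendix \ref{appA}) is applied \emph{before} passing to the $D_x$-form, is precisely to create the needed poles: it replaces $A$ by $A'=A/((1+x/e_1)(1+x/e_2)(1+x/e_3))$, which satisfies $A'(0)=I$, $A'(\infty)=\kappa e_1e_2e_3\,I$, and has simple poles only at $x=-e_1,-e_2,-e_3$, so that $D_xy=\sum_{i=1}^{3}\frac{A_i}{x+e_i}y$ as in \eqref{DxAi} with $N=3$, $M=3$, and $A_i\propto A(-e_i)$ of one-dimensional kernel. The convoluted system is $9\times 9$, $\dim\mathcal{K}=3$, and the specific choice $q^\lambda=(\kappa e_1e_2e_3)^{-1}$ is exactly what forces $\dim\mathcal{L}=3$, because $A_1+A_2+A_3=\frac{1-\kappa e_1e_2e_3}{1-q}I$; the quotient then has dimension $9-6=3$. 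Your description of the gauge as an ``addition'' that moves singular points to $x=0$ has the mechanism backwards, and your dimension counts cannot be repaired within your setup.

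Even granting a correct reduction to a $3\times3$ system, your step (5) merely asserts the two substantive conclusions \eqref{Apro1} and \eqref{Apro2} rather than proving them. In the paper, the cubic form with scalar leading coefficient requires showing $\tilde G_1+\tilde G_2+\tilde G_3=[\lambda]I$ (via \eqref{tildeGsum}, using that $\mathcal{L}=\{{}^{\mathrm T}[v,v,v]\}$ and that the eigenvalue $[\lambda]$ of $G_1+G_2+G_3$ has a six-dimensional eigenspace), and the determinant statement requires identifying the complementary blocks $H_i$ explicitly as in \eqref{H}, computing $\det\bigl[I-(1-q)x\sum_{i=1}^3 G_i/(x+e_i)\bigr]$ in closed form (it is proportional to $\prod_{i=1}^9(q^\lambda x+e_i)$, see \eqref{detG}), and dividing off the $H$-contribution $\prod_{i=1}^3\frac{q^\lambda x+e_i}{x+e_i}$ from \eqref{detH}; only then do the shifted roots $-\kappa e_1e_2e_3e_i$ for $i=4,\ldots,9$ appear in \eqref{dettildeAapp}, with the outer gauge by $g^{-1}$ restoring the factors at $-e_1,-e_2,-e_3$. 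These computations are the actual content of the theorem, and appeals to genericity or to matching spectral types do not substitute for them.
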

	\begin{proof}
		See Appendix \ref{appA}.
	\end{proof}
	According to Proposition \ref{E8mc}, the action
	\begin{align}
		&s_0:[e_1,e_2,e_3,e_4,e_5,e_6,e_7,e_8,e_9,\kappa]\notag\\
		&\mapsto[e_1,e_2,e_3,\kappa e_1e_2e_3 e_4,\kappa e_1e_2e_3e_5,\kappa e_1e_2e_3e_6,\kappa e_1e_2e_3e_7,\kappa e_1e_2e_3e_8,\kappa e_1e_2e_3e_9,\kappa^{-1}(e_1e_2e_3)^{-2}],
	\end{align}
	on parameters can be described by using the $q$-middle convolution.
	We put $s_i: e_i\leftrightarrow e_{i+1}$ $(i=1,\ldots,8)$.
	The equation \eqref{defeq} has the affine Weyl group symmetry of type $E_8^{(1)}$ as actions on parameters:
	\begin{prop}\label{propdynkin}
		The maps $s_0$, $s_1$,$\ldots$, $s_8$ on $\{[e_1,\ldots,e_9,\kappa]\mid \kappa^3 e_1\cdots e_9=1\}$ satisfy the Coxeter relations of type $E_8^{(1)}$.
		The Dynkin diagram is as follows:
		\begin{center}
			\begin{tikzpicture}
				\draw (4,2) node {$s_0$};
				\draw (4,1.5)--(4,0.5);
				\draw (0,0) node {$s_1$};
				\draw (0.5,0)--(1.5,0);
				\draw (2,0) node {$s_2$};
				\draw (2.5,0)--(3.5,0);
				\draw (4,0) node {$s_3$};
				\draw (4.5,0)--(5.5,0);
				\draw (6,0) node {$s_4$};
				\draw (6.5,0)--(7.5,0);
				\draw (8,0) node {$s_5$};
				\draw (8.5,0)--(9.5,0);
				\draw (10,0) node {$s_6$};
				\draw (10.5,0)--(11.5,0);
				\draw (12,0) node {$s_7$};
				\draw (12.5,0)--(13.5,0);
				\draw (14,0) node {$s_8$};
			\end{tikzpicture}
		\end{center}
	\end{prop}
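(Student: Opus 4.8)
The plan is to verify, directly from the explicit formulas, the three kinds of defining relations of $W(E_8^{(1)})$ for the displayed diagram: involutivity $s_i^2=\mathrm{id}$; commutation $s_is_j=s_js_i$ for non-adjacent nodes; and the braid relation $(s_is_j)^3=\mathrm{id}$ for adjacent nodes. First I would note that each $s_i$ is a well-defined self-map of the set $\{[e_1,\dots,e_9,\kappa]\mid\kappa^3e_1\cdots e_9=1\}$: this is clear for $s_1,\dots,s_8$, which permute $e_1,\dots,e_9$ and fix $\kappa$, and for $s_0$ it follows from the one-line identity $s_0(\kappa)^3\prod_{i=1}^9 s_0(e_i)=\kappa^3\prod_{i=1}^9 e_i$. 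The maps $s_1,\dots,s_8$ are the adjacent transpositions $e_i\leftrightarrow e_{i+1}$ and hence generate a copy of $\mathfrak S_9$; in particular they satisfy exactly the type $A_8$ Coxeter relations carried by the subchain $s_1\!-\!s_2\!-\!\cdots\!-\!s_8$, so only the relations involving $s_0$ remain.

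Writing $p=e_1e_2e_3$, which is fixed by $s_0$, one gets $s_0^2=\mathrm{id}$ at once: two applications of $s_0$ send $e_i$ ($i\ge4$) to $s_0(\kappa)\,p\cdot\kappa p\,e_i=e_i$ and $\kappa$ to $s_0(\kappa)^{-1}p^{-2}=\kappa$. The commutations with the non-adjacent generators follow from the shape of $s_0$: since $s_0$ fixes $e_1,e_2,e_3$ individually and involves them only through the symmetric quantity $p$, it commutes with $s_1$ and $s_2$; and since $s_0$ scales each of $e_4,\dots,e_9$ by the \emph{same} factor $\kappa p$ while altering $\kappa$ only through $p$, it commutes with $s_4,\dots,s_8$. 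Each of these is a one-line check on the generators' images.

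What is left --- and where the real content sits --- is the single braid relation $s_0s_3s_0=s_3s_0s_3$ along the edge $s_0\!-\!s_3$. This one is not formal: $s_3$ swaps $e_3$, which sits inside the multiplier defining $s_0$, with $e_4$, which is one of the variables $s_0$ scales, so the two maps genuinely interact and must be iterated by hand. I would compute in adapted coordinates: $e_1,e_2$ are fixed by both $s_0$ and $s_3$ and play no role, so set $m=e_1e_2$ and track the triple $(e_3,e_4,\kappa)$, on which
\[
s_3:(e_3,e_4,\kappa)\longmapsto(e_4,\,e_3,\,\kappa),\qquad s_0:(e_3,e_4,\kappa)\longmapsto\bigl(e_3,\ \kappa m\,e_3e_4,\ \kappa^{-1}m^{-2}e_3^{-2}\bigr).
\]
Setting $w:=s_0\circ s_3$ and composing, one finds $w^2:(e_3,e_4,\kappa)\mapsto(\kappa m\,e_3e_4,\ e_3,\ \kappa^{-1}m^{-2}e_3^{-2})$ and then $w^3=\mathrm{id}$ on this triple. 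It remains to check the other coordinates: $e_1,e_2$ are fixed, and for $5\le k\le9$ the variable $e_k$ is fixed by $s_3$ and multiplied by $\kappa p$ by $s_0$, so $w$ multiplies $e_k$ by a Laurent monomial in $e_1,\dots,e_4$ and $\kappa$; pushing this monomial through the three applications of $w$ by means of the formulas above, the exponents of $m$, $\kappa$ and $e_3$ all telescope to $0$, so $w^3(e_k)=e_k$. Hence $(s_0s_3)^3=\mathrm{id}$, which together with the preceding observations gives all the defining relations of $W(E_8^{(1)})$.

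Thus the only real obstacle is this braid relation along the $s_0\!-\!s_3$ edge: it is the unique relation mixing the non-permutation behaviour of $s_0$ (its action on $\kappa$, and the presence of $e_3$ in its multiplier) with a transposition that moves $e_3$, and it has to be established through the threefold iteration above. All the other relations are consequences of the evident symmetries of the formulas: $\mathfrak S_9$ for the $A_8$ part, invariance of $s_0$ under permutations of $e_1,e_2,e_3$ together with uniform scaling of $e_4,\dots,e_9$ for the commutations, and invariance of $p$ for $s_0^2=\mathrm{id}$.
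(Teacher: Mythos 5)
Your verification is correct: the constraint $\kappa^3e_1\cdots e_9=1$ is preserved, $s_0^2=\mathrm{id}$, the commutations with $s_1,s_2,s_4,\ldots,s_8$ follow as you say, and your iteration of $w=s_0\circ s_3$ (including the multipliers $\kappa m e_4$ and $\kappa m e_3$ picked up by $e_5,\ldots,e_9$ under $w$ and $w^2$) does give $w^3=\mathrm{id}$, hence the braid relation on the $s_0$--$s_3$ edge. The paper's proof is exactly this kind of direct check on the parameter action (it states only ``this can be checked directly''), so your argument supplies the same computation in detail.
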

	\begin{proof}
		This can be checked directly.
	\end{proof}

	We define $A_i$ as
	\begin{align}\label{defAisec4}
		\sum_{i=1}^3\frac{A_i}{x+e_i}=\frac{I-A'}{(1-q)x},\quad A'=\frac{A}{(1+x/e_1)(1+x/e_2)(1+x/e_3)},
	\end{align}
	and assume that $(A_1,A_2,A_3)$ satisfies the conditions ($\ast$) and ($\ast\ast$) in Definition \ref{defisom} (iii) and (iv) in the following.
	This assumption is used in the proof of Proposition \ref{propbraid} below.
	We give a birational representation of the affine Weyl group $W(E_8^{(1)})$ via transformations of the equation \eqref{defeq}.
	We assume that $\kappa e_ie_je_k\neq 1$, where $i\neq j\neq k\neq i$.
	We use the notation $B\simeq C$ for matrices $B$ and $C$ if there exists $G$ such that $G^{-1}BG=C$.
	We put $s_0.A=\tilde{A}$, where $\tilde{A}$ is given in Theorem \ref{E8mc}.
	In other words, $s_0$ is defined as $s_0=g^{-1}\circ mc_\lambda \circ g$ where $g=(-x/e_1)_\infty(-x/e_2)_\infty(-x/e_3)_\infty$ and $q^\lambda=(\kappa e_1e_2e_3)^{-1}$.
	The following proposition is crucial.
	\begin{prop}\label{propbraid}
		We have
		\begin{align}
			\label{braid0i}&s_0.s_i.A\simeq s_i.s_0.A\quad (i\neq 0,3),\\
			\label{braid00}&s_0.s_0.A\simeq A.
		\end{align}
	\end{prop}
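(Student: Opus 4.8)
The plan is to reduce both relations to statements purely about the behaviour of the $q$-middle convolution under permutation of the parameters $e_i$, and then to invoke Theorem \ref{thmcompose} together with Proposition \ref{proppreserve}. Recall that $s_0 = g^{-1}\circ mc_\lambda\circ g$ with $g$ built from $e_1,e_2,e_3$ only, while $s_i$ for $i\neq 0,3$ permutes two of the $e_j$'s that are \emph{not} simultaneously in $\{e_1,e_2,e_3\}$ and in the complement; concretely, for each such $i$ the transposition $s_i$ either fixes the set $\{e_1,e_2,e_3\}$ (when $i=1,2$) or fixes its complement (when $i=4,\dots,8$). First I would observe that, after the gauge transformation by $g$ bringing \eqref{defeq} to the $q$-Fuchsian form \eqref{qFuchs} with residues $(A_1,A_2,A_3)$ as in \eqref{defAisec4}, the $q$-middle convolution $mc_\lambda$ acts on the tuple $(A_1,A_2,A_3)$ exactly as in Section \ref{secqmc}, and the assumption that $(A_1,A_2,A_3)$ satisfies $(\ast)$ and $(\ast\ast)$ is in force.

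For \eqref{braid00}: both $s_0.s_0.A$ and $A$ correspond, in the $q$-Fuchsian picture, to applying $mc_{-\lambda}\circ mc_{\lambda}$ to $(A_1,A_2,A_3)$ (the parameter flips sign because after the first $s_0$ the quantity $\kappa e_1 e_2 e_3$ is inverted, so the second middle convolution has exponent $-\lambda$), conjugated by the appropriate gauge factors $g^{\pm1}$. By Theorem \ref{thmcompose}(ii), $mc_{-\lambda}(mc_\lambda(V,\mathbf{B}))$ is isomorphic to $(V,\mathbf{B})$; unwinding the isomorphism $\phi$ into a matrix $G\in GL(3)$ and keeping track that the outer gauge transformations on both sides coincide gives $s_0.s_0.A\simeq A$. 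The only care needed is that the gauge factor $g$ is unchanged by $s_0$ since $e_1,e_2,e_3$ are fixed by $s_0$, so no extra scalar function survives.

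For \eqref{braid0i} with $i\neq 0,3$: here I would split into the two cases. If $i\in\{1,2\}$, then $s_i$ merely permutes $e_1,e_2,e_3$ among themselves, hence commutes with $g$ up to relabelling and with $mc_\lambda$ (which is symmetric in the roles of the three punctures, since reordering the blocks of $B$ in \eqref{defSB} is an obvious conjugation); thus $s_0 s_i.A$ and $s_i s_0.A$ are literally gauge-equivalent. If $i\in\{4,\dots,8\}$, then $s_i$ fixes $\{e_1,e_2,e_3\}$ pointwise, so it leaves both $g$ and $\lambda=-\log_q(\kappa e_1e_2e_3)$ untouched; the claim then becomes that $mc_\lambda$ commutes with the permutation $s_i$ of the last six parameters, which again is a block-reordering conjugation of $B$. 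In all cases the heart is: permuting the blocks of $S$ and $B$ in \eqref{defSB} by a permutation matrix $P$ sends $(G_1,\dots,G_N)$ to $(PG_{\pi^{-1}(1)}P^{-1},\dots)$, and this descends to the quotient $\mathbb{C}^{MN}/(\mathcal{K}+\mathcal{L})$ since $P$ preserves $\mathcal{K}+\mathcal{L}$.

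The main obstacle I anticipate is bookkeeping the gauge factors and the identification of the abstract isomorphism of Theorem \ref{thmcompose} with a concrete $GL(3)$ matrix $G$: one must check that passing from \eqref{defeq} to \eqref{qFuchs} and back (via $g$, resp. $g^{-1}$, together with the projection to $\mathbb{C}^{MN}/(\mathcal{K}+\mathcal{L})\cong\mathbb{C}^3$) is compatible on both sides of each relation, so that the ``$\simeq$'' is genuinely realized by a single invertible matrix rather than by an equivalence that also rescales $x$ or introduces a spurious $(-x/e_j)_\infty$ factor. This is exactly the point where the present construction differs from Arai--Takemura's (the treatment of $x=0$), and it is why the hypothesis $\kappa e_ie_je_k\neq 1$ and the conditions $(\ast)$, $(\ast\ast)$ are needed: they guarantee that $mc_0=\mathrm{id}$ (Proposition \ref{propmc0}) and that the dimension of the quotient stays equal to $3$ throughout, so no collapse of rank occurs. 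Once that compatibility is established, \eqref{braid0i} and \eqref{braid00} follow formally from Theorem \ref{thmcompose} and the permutation-equivariance of $mc_\lambda$.
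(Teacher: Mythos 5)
Your treatment of \eqref{braid00} is essentially the paper's: you note $s_0.g=g$, $s_0.\lambda=-\lambda$, so $s_0.s_0.A=g^{-1}\circ mc_{-\lambda}\circ mc_{\lambda}\circ g.A$, and conclude by Theorem \ref{thmcompose}(ii) under $(\ast)$, $(\ast\ast)$ — exactly the argument in Appendix \ref{appB} (your side remark about Proposition \ref{propmc0} is not needed, but harmless).

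For \eqref{braid0i} your route is genuinely different from the paper's. You argue by permutation-equivariance of the convolution: a block permutation matrix $P$ conjugates the tuple $(G_1,G_2,G_3)$ built from $s_i.A$ into the parameter-swapped tuple built from $A$, and since $P$ carries $\mathcal{K}+\mathcal{L}$ of one configuration onto that of the other (as $\mathcal{K}$ is assembled blockwise from $\mathrm{Ker}A_j$ and $\mathcal{L}$ is canonical), the conjugation descends to the quotient, so the two $mc_\lambda$ outputs are conjugate by a constant matrix; combined with $s_i.g=g$, $s_i.\lambda=\lambda$ this gives \eqref{braid0i}. The paper instead works with the explicit block-triangular forms $R^{-1}G_jR$ and $S^{-1}\overline{G}_jS$, takes an intertwiner $C$ between the two full $9\times 9$ systems, and proves $C_{12}=O$ by evaluating at $x=-q^{-\lambda}e_j$ and $x\to\infty$, using the eigenvalues of the $H_j$ blocks, the determinant identity \eqref{dettildeAapp}, $e_i\neq e_j$, and $[\lambda]\neq 1$; the conjugacy of the reduced systems is then realized by $C_{11}$. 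Your argument is shorter and more conceptual, and at this step avoids the nondegeneracy conditions the paper invokes; what it requires in exchange is an explicit statement and check of the functoriality you are using — namely that an isomorphism of tuples which matches up the kernels of the residue matrices induces, independently of the basis choices ($u_1,u_2,u_3$, etc.) entering the definition of $\bar G_j$, a conjugation of the $q$-middle convolution outputs. That lemma is not stated in the paper (it is implicit in Definition \ref{defisom} and the Arai--Takemura framework) but is a routine verification, and you correctly flag the remaining bookkeeping (that $s_i.s_0.A$ is the parameter-substitution of a fixed rational representative, while $s_0.s_i.A$ is computed with possibly different basis choices, the two differing precisely by such a conjugation). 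With that lemma spelled out, your proof is complete; the paper's computation buys, in return for more work, a self-contained matrix-level proof together with an explicit intertwiner.
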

	\begin{proof}
		See Appendix \ref{appB}.
	\end{proof}
	Clearly, the map $s_0$ defines a rational transformation on the components of $A$.
	Due to \eqref{braid00}, we have $s_0^2=\mathrm{id}$ up to conjugation by $GL(3)$.
	In this sense, the map $s_0$ is birational.
	\begin{conj}\label{conjrepE8}
		The set
		\begin{align}
			\{A\mid \mbox{$A$ satisfies \eqref{defA} and \eqref{detA}, $e_1,\ldots,e_9,\kappa\in\mathbb{C}^\times$ such that $\displaystyle\kappa \prod_{i=1}^9 e_i=1$, $e_i\neq e_j$ $(i\neq j)$}\}/\simeq,
		\end{align}
		admits a birational action on the affine Weyl group $W(E_8^{(1)})$.
		Each action and the corresponding Dynkin diagram are as follows:
		\begin{align}
			&s_0=g^{-1}\circ mc_\lambda \circ g\quad \mbox{($g=(-x/e_1)_\infty(-x/e_2)_\infty(-x/e_3)_\infty$ and $q^\lambda=(\kappa e_1e_2e_3)^{-1}$)},\\
			&s_i:e_i\leftrightarrow e_{i+1}\quad (i\neq 3),\\
			&s_3:e_3\leftrightarrow e_4,\quad \mbox{acts on accessory parameters birationally}
		\end{align}
		\begin{center}
			\begin{tikzpicture}
				\draw (4,2) node {$s_0$};
				\draw (4,1.5)--(4,0.5);
				\draw (0,0) node {$s_1$};
				\draw (0.5,0)--(1.5,0);
				\draw (2,0) node {$s_2$};
				\draw (2.5,0)--(3.5,0);
				\draw (4,0) node {$s_3$};
				\draw (4.5,0)--(5.5,0);
				\draw (6,0) node {$s_4$};
				\draw (6.5,0)--(7.5,0);
				\draw (8,0) node {$s_5$};
				\draw (8.5,0)--(9.5,0);
				\draw (10,0) node {$s_6$};
				\draw (10.5,0)--(11.5,0);
				\draw (12,0) node {$s_7$};
				\draw (12.5,0)--(13.5,0);
				\draw (14,0) node {$s_8$};
			\end{tikzpicture}
		\end{center}
	\end{conj}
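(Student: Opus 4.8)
\medskip
\noindent\textbf{Toward a proof of Conjecture \ref{conjrepE8}.}
Write $\mathcal{M}$ for the set in the statement, i.e.\ the moduli of matrices $A$ satisfying \eqref{defA}, \eqref{detA} modulo conjugation $\simeq$. By the discussion around \eqref{XXX}--\eqref{X2trig}, $\mathcal{M}$ fibres over the parameter space $\{[e_1,\dots,e_9,\kappa]\mid\kappa^3 e_1\cdots e_9=1\}$ of Proposition \ref{propdynkin}, with two-dimensional fibres coordinatized by accessory parameters (for instance the $a_3,a_4,a_5,a_6$ of \eqref{XXtrig}, \eqref{X2trig} modulo diagonal gauge). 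By the Coxeter presentation attached to the displayed Dynkin diagram it suffices to check that each of $s_0,\dots,s_8$ is a well-defined birational involution of $\mathcal{M}$, that $(s_is_j)^2=\mathrm{id}$ for non-adjacent $i,j$, and that $(s_is_j)^3=\mathrm{id}$ for adjacent $i,j$, all as identities in $\mathrm{Bir}(\mathcal{M})$, i.e.\ up to $GL(3)$-conjugation. Much of this is already in hand: Proposition \ref{propdynkin} gives all these relations for the induced action on $[e_1,\dots,e_9,\kappa]$; for $i\in\{1,2,4,5,6,7,8\}$ the generator $s_i$ is the transposition $e_i\leftrightarrow e_{i+1}$ acting trivially on $A$ (the zeros of $\det A$ being unordered), so $s_i^2=\mathrm{id}$ and all relations among these $s_i$ are trivial; Proposition \ref{propbraid} gives $s_0^2\simeq\mathrm{id}$ and $s_0s_i\simeq s_is_0$ for $i\neq 3$, whence $(s_0s_i)^2\simeq\mathrm{id}$ for $i\neq 3$; and the commutations $(s_3s_j)^2=\mathrm{id}$ for $j\in\{1,5,6,7,8\}$ should follow once $s_3$ is constructed, as $s_j$ acts trivially on $A$ and $s_3$ will only involve $e_3,e_4$ together with $e_1,e_2,\kappa$ symmetrically in $e_1,e_2$.

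Two genuinely new ingredients remain. \emph{First}, one must construct $s_3$ as an involutive birational self-map of $\mathcal{M}$ lifting $e_3\leftrightarrow e_4$. It cannot be taken trivial on $A$: the value $e_3$ is distinguished in the definition of $s_0$ --- it enters both $g=(-x/e_1)_\infty(-x/e_2)_\infty(-x/e_3)_\infty$ and the exponent with $q^\lambda=(\kappa e_1e_2e_3)^{-1}$ --- whereas $e_4$ is not, so after applying $s_3$ the map $s_0$ uses a different distinguished triple and a different $g$-dressing; for the relations involving $s_0$ to close, $s_3$ must compensate by acting on the accessory parameters. The natural place to read off this action is the factorized form $A=(I+X_1)(I+X_2)(I+X_3)$ of \eqref{XXX}--\eqref{X2trig}, in which $e_3$ appears in two of the three factors; exchanging it with $e_4$ becomes a controlled rearrangement of the triangular entries $a_3,a_4,a_5,a_6$, which one then verifies is birational (conjecturally polynomial) and of order two. \emph{Second}, with this $s_3$ one must establish the two $A_8$-braid relations $(s_2s_3)^3=\mathrm{id}$ and $(s_3s_4)^3=\mathrm{id}$ on $\mathcal{M}$; since $s_2,s_3,s_4$ restrict to adjacent transpositions on the parameters, these reduce to finite computations on the accessory-parameter fibre, in the spirit of the $q$-middle-convolution calculations of \cite{SST} and of Appendix \ref{appB}.

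The crux --- and presumably why the statement is left as a conjecture --- is the braid relation between $s_0$ and $s_3$, that $s_0s_3s_0\simeq s_3s_0s_3$ as birational maps of $\mathcal{M}$. On the parameters this is contained in Proposition \ref{propdynkin}, but on $\mathcal{M}$ both sides are triple composites of conjugation-normalized $q$-middle convolutions $mc_\lambda$, dressed by $g^{\pm1}$ with \emph{different} parameters at each occurrence and interleaved with the action of $s_3$ just described, and one must show the two resulting rational maps coincide. The essential tool is Theorem \ref{thmcompose} together with Proposition \ref{proppreserve}: the conditions ($\ast$), ($\ast\ast$) imposed on $(A_1,A_2,A_3)$ around \eqref{defAisec4} are preserved throughout, so $mc_{\lambda_2}\circ mc_{\lambda_1}\simeq mc_{\lambda_1+\lambda_2}$ and $mc_{-\lambda}\circ mc_\lambda\simeq\mathrm{id}$, which lets the two triple composites be telescoped to a common normal form once the intervening $g$-dressings are commuted past the middle convolutions. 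The real difficulty is to keep track of the $GL(3)$-gauge through the whole chain and to confirm that every intermediate rank-$3$ system retains ($\ast$), ($\ast\ast$) and the correct dimension count, so that projection to $\mathbb{C}^{MN}/(\mathcal{K}+\mathcal{L})$ introduces no spurious apparent singularities; the formula for $s_3$ on the accessory parameters must be chosen precisely so that this works. A more synthetic alternative would be to identify $\mathcal{M}$ --- via the scalar reduction of Proposition \ref{propsc} and its comparison with Moriyama--Yamada's quantum curve, whose $W(E_8^{(1)})$-symmetry is known --- with an open part of the moduli attached to the $E_8^{(1)}$ Sakai surface, transport the affine Weyl group action from there, and match it with $s_0$ and the $s_i$ on a Zariski-dense subset. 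In either approach a useful consistency check is the classical limit $q\to1$, in which $mc_\lambda$ degenerates to Katz's middle convolution and the assertion should reduce to a $W(E_8^{(1)})$-action on the associated rank-$3$ Fuchsian system.
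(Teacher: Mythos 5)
This statement is left as a conjecture in the paper: the author states explicitly that the construction of the $s_3$-action and the corresponding braid relations remain open and that the conjecture is supported only by numerical experiments. Your proposal does not close that gap. What you have written is an accurate inventory of what is already known (the parameter-level relations of Proposition \ref{propdynkin}, the relations $s_0^2\simeq\mathrm{id}$ and $s_0s_i\simeq s_is_0$ for $i\neq 3$ from Proposition \ref{propbraid}, and the composition law of Theorem \ref{thmcompose} under the conditions ($\ast$), ($\ast\ast$)), together with a plan for the missing pieces; but the two ingredients you yourself flag as ``genuinely new'' --- an explicit involutive birational lift of $e_3\leftrightarrow e_4$ to the accessory parameters, and the verification of the braid relations $(s_0s_3)^3$, $(s_2s_3)^3$, $(s_3s_4)^3$ on the moduli space --- are exactly the content of the conjecture, and your text proposes where one might look for them (the factorized form \eqref{XXX}--\eqref{X2trig}, telescoping of middle convolutions, or transport from the Moriyama--Yamada curve) without deriving a formula for $s_3$ or checking a single one of the new relations. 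A strategy with the hard steps deferred is a research programme, not a proof; as it stands the proposal proves nothing beyond what Propositions \ref{propdynkin}, \ref{propbraid} already give.

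Two further points to keep in mind if you pursue this. First, the telescoping idea for $s_0s_3s_0$ versus $s_3s_0s_3$ is already anticipated in the remark at the end of Appendix \ref{appB}, where both composites are rewritten in terms of $mc_\lambda$, $mc_\mu$, $mc_\nu$ with $\nu+\lambda=\mu$ and $mc_\nu\circ mc_\lambda\simeq mc_\mu$; even there the argument hinges on an $s_3$-action ``put suitably'', so the real work is the explicit construction of $s_3$ (say on $a_3,a_4,a_5,a_6$ of \eqref{XXtrig}--\eqref{X2trig} modulo diagonal gauge) and the control of the $GL(3)$-gauge and of the conditions ($\ast$), ($\ast\ast$) along every intermediate system --- none of which is carried out. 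Second, your observation that $s_i$ for $i\neq 0,3$ ``acts trivially on $A$'' cuts against the way you set up the statement: if the moduli space is literally the set of $A$ modulo conjugation, with the $e_i$ recovered as unordered roots of $\det A$, then these generators are the identity map and the claimed relations involving them are vacuous rather than verified; to get an honest $W(E_8^{(1)})$-action one must work with $A$ together with an ordering of the roots (equivalently with the parameter point $[e_1,\ldots,e_9,\kappa]$ as part of the data), and the proposal should say so before invoking the Coxeter presentation.
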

	Proposition \ref{propdynkin} motivates us to find the braid relations of $s_3$ and $s_i$ as operations on the equation \eqref{defeq}.
	The construction of $s_3$ and the corresponding braid relation remain open in general.
	This conjecture is supported by numerical experiments.
	The statement of this conjecture gives a birational representation of $W(E_8^{(1)})$, thus to show this conjecture is important.

	\section{Scalar equation}\label{secsc}
	In this section we transform the equation \eqref{trigeq} to a rank $3$ scalar equation.
	We also compare this scalar equation with Moriyama--Yamada's quantum curve \cite{MY}.
	First we review the point configuration of $q$-difference equation which is introduced in \cite{FN}.
	This is a tool to characterize linear $q$-difference equations.
	\begin{defn}[\cite{FN}]\label{defconfiguration}
		Let $\displaystyle L=\sum_{i=0}^{M}\sum_{j=0}^{N}a_{i,j}x^{i}T_x^j $ be a linear $q$-difference operator where $a_{i,j}\in\mathbb{C}$. 
		\begin{itemize}
			\item[1.] We call the equation $Ly=0$ {Fuchsian $q$-difference equation} if $a_{0,0}$, $a_{0,N}$, $a_{M,0}$, $a_{M,N}\neq0$.\footnote{This condition means $x=0$ and $x=\infty$ are regular singularities.}
			\item[2.] \begin{itemize}
				\item[i.] We rewrite $\displaystyle L=\sum_{i=0}^M x^i L_i(T_x)$.
				The roots of $L_{0}(y)=0$ are called the characteristic roots at $x=0$.
				Similarly the roots of $L_{M}(y)=0$ are called the characteristic roots at $x=\infty$.
				\item[ii.] We rewrite $\displaystyle L=\sum_{j=0}^N P_j(x)T_x^j$.
				The roots of $P_{0}(x)=0$ are called the characteristic roots at $T_x=0$, and the roots of $P_{N}(y)=0$ are called the characteristic roots at $x=\infty$.
			\end{itemize}
			\item[3.] 
			\begin{itemize}
				\item[i.] Suppose that $e, e q,\ldots, e q^{k-1}$ are characteristic roots at $x=0$  (resp. at $T_x=0$).
				We call $\{e, e q,\ldots, e q^{k-1}\}$ ``$k$-th characteristic roots'' if
				\begin{align*}
					&\begin{cases}
						L_0(T_x)\propto(T_x-e)(T_x-e q)\cdots (T_x-e q^{k-2})(T_x-e q^{k-1}),\\
						L_1(T_x)\propto(T_x-e)(T_x-e q)\cdots (T_x-e q^{k-2}),\\
						\quad\vdots\hspace{70pt}\rotatebox{90}{$\ddots$}\\
						L_{k-1}(T_x)\propto(T_x-e)
					\end{cases}\\
					&\left(\mbox{resp. }\begin{cases}
						P_0(x)\propto(x-e)(x-e q)\cdots (x-e q^{k-2})(x-e q^{k-1}),\\
						P_1(x)\propto(x-e)(x-e q)\cdots (x-e q^{k-2}),\\
						\quad\vdots\hspace{70pt}\rotatebox{90}{$\ddots$}\\
						P_{k-1}(x)\propto(x-e)
					\end{cases}\right).
				\end{align*}
				\item[ii.] Suppose that $e, e q^{-1},\ldots, e q^{-(k-1)}$ are characteristic roots at $x=\infty$  (resp. $T_x=\infty$).
				We call $\{e, e q,\ldots, e q^{k-1}\}$ $k$-th roots if
				\begin{align*}
					&\begin{cases}
						L_M(T_x)\propto(T_x-e)(T_x-e q^{-1})\cdots (T_x-e q^{-(k-2)})(T_x-e q^{-(k-1)}),\\
						L_{M-1}(T_x)\propto(T_x-e)(T_x-e q^{-1})\cdots (T_x-e q^{-(k-2)}),\\
						\quad\vdots\hspace{70pt}\rotatebox{90}{$\ddots$}\\
						L_{M-k+1}(T_x)\propto(T_x-e)
					\end{cases}\\
					&\left(\mbox{resp. }\begin{cases}
						P_N(x)\propto(x-e)(x-e q^{-1})\cdots (x-e q^{-(k-2)})(x-e q^{-(k-1)}),\\
						P_{N-1}(x)\propto(x-e)(x-e q^{-1})\cdots (x-e q^{-(k-2)}),\\
						\quad\vdots\hspace{70pt}\rotatebox{90}{$\ddots$}\\
						P_{N-k+1}(x)\propto(x-e)
					\end{cases}\right).
				\end{align*}
			\end{itemize}
			\item[4.] For a Fuchsian $q$-difference equation $Ly=0$, the point configuration of $Ly=0$ is a figure summarizing its characteristic roots and non-logarithmic roots, given by the following rule:
			\begin{itemize}
				\item[i.] Draw 4 lines $x=0$, $x=\infty$, $T_x=0$, $T_x=\infty$.
				\item[ii.] Plot all characteristic roots at $x=0$, $x=\infty$, $T_x=0$, $T_x=\infty$ on the corresponding line. 
				\item [iii.] Use $k$-th point to express $k$-th characteristic roots.
			\end{itemize}
		\end{itemize}
	\end{defn}
	\begin{lemm}[\cite{FN,MY}]
		All characteristic roots of a Fuchsian $q$-difference equation $Ly=0$ is given by
		\begin{itemize}
			\item $a_1,\ldots,a_N$ at $x=0$,
			\item $b_1,\ldots,b_N$ at $x=\infty$,
			\item $c_1,\ldots,c_M$ at $T_x=0$,
			\item $d_1,\ldots,d_M$ at $T_x=\infty$.
		\end{itemize}
		Then we have
		\begin{align}
			a_1\cdots a_Nd_1\cdots d_M=b_1\cdots b_Nc_1\cdots c_M.
		\end{align}
		This is called $q$-Fuchs relation.
	\end{lemm}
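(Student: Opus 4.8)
The plan is to reduce both sides of the asserted identity to one and the same expression in the corner coefficients of $L$, using nothing more than Vieta's formula applied to the four auxiliary one-variable polynomials whose zero sets are, by definition, the characteristic roots.

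First I would record the two groupings of $L=\sum_{i,j}a_{i,j}x^{i}T_x^{j}$. Writing $L=\sum_{i=0}^{M}x^{i}L_i(T_x)$ gives $L_0(T_x)=\sum_{j=0}^{N}a_{0,j}T_x^{j}$ and $L_M(T_x)=\sum_{j=0}^{N}a_{M,j}T_x^{j}$, whose roots are $a_1,\ldots,a_N$ and $b_1,\ldots,b_N$; writing $L=\sum_{j=0}^{N}P_j(x)T_x^{j}$ gives $P_0(x)=\sum_{i=0}^{M}a_{i,0}x^{i}$ and $P_N(x)=\sum_{i=0}^{M}a_{i,N}x^{i}$, whose roots are $c_1,\ldots,c_M$ and $d_1,\ldots,d_M$, exactly as in Definition \ref{defconfiguration}. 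The Fuchsian hypothesis $a_{0,0},a_{0,N},a_{M,0},a_{M,N}\neq0$ guarantees that $L_0,L_M$ have degree exactly $N$ in $T_x$, that $P_0,P_N$ have degree exactly $M$ in $x$, and that $0$ is a root of none of them, so the product of all roots of each polynomial is given by Vieta's formula.

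Then I would substitute
\begin{gather*}
a_1\cdots a_N=(-1)^{N}\frac{a_{0,0}}{a_{0,N}},\qquad
b_1\cdots b_N=(-1)^{N}\frac{a_{M,0}}{a_{M,N}},\\
c_1\cdots c_M=(-1)^{M}\frac{a_{0,0}}{a_{M,0}},\qquad
d_1\cdots d_M=(-1)^{M}\frac{a_{0,N}}{a_{M,N}},
\end{gather*}
so that $a_1\cdots a_N\cdot d_1\cdots d_M=(-1)^{M+N}a_{0,0}/a_{M,N}$ and, in the same way, $b_1\cdots b_N\cdot c_1\cdots c_M=(-1)^{M+N}a_{0,0}/a_{M,N}$; equating the two yields the $q$-Fuchs relation. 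I do not expect any real obstacle here: the only substantive point is that the ``mixed'' corner coefficients $a_{0,N}$ and $a_{M,0}$ cancel within each side, leaving the manifestly symmetric value $(-1)^{M+N}a_{0,0}/a_{M,N}$; one could equally track constant terms and leading coefficients directly and never write down the sign.
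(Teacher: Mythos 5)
Your proposal is correct and is essentially the paper's own argument: the paper proves the $q$-Fuchs relation simply by comparing the two groupings $L=\sum_i x^i L_i(T_x)=\sum_j P_j(x)T_x^j$, and your Vieta computation with the four corner coefficients $a_{0,0},a_{0,N},a_{M,0},a_{M,N}$ (each side reducing to $(-1)^{M+N}a_{0,0}/a_{M,N}$) is exactly the detailed version of that comparison. No gaps; the Fuchsian condition is used correctly to ensure the four polynomials have full degree and nonzero constant term.
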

	\begin{proof}
		The $q$-Fuchs relation is found by comparing two expression of $L$:
		\begin{align}
			L=\sum_{i=0}^M x^i L_i(T_x)=\sum_{j=0}^N P_j(x)T_x^j.
		\end{align}
	\end{proof}
	\begin{exa}
		Consider a Fuchsian $q$-difference equation $\displaystyle Ly=\sum_{i=0}^6\sum_{j=0}^4 a_{i,j}x^iT_x^j y=0$ having the following roots:
		\begin{itemize}
			\item[1.] $e_1$, $e_2$, $\{e_3,e_3q\}$ (double roots) at $x=0$.
			\item[2.] $e_4$, $e_5$, $e_6$, $e_7$ at $x=\infty$.
			\item [3.] $e_8$, $e_9$, $\{e_{10},e_{10}q\}$ (double roots), $\{e_{11},e_{11}q\}$ (double roots) at $T_x=0$.
			\item[4.] $e_{12}$, $e_{13}$, $e_{14}$, $\{e_{15},e_{15}q^{-1},e_{15}q^{-2}\}$ (triple roots) at $T_x=\infty$.
		\end{itemize}
		Then the corresponding point configuration is given as follows.
		\begin{center}
			\begin{tikzpicture}
				\draw (0,1)--(12,1);
				\draw (0,7)--(12,7);
				\draw (1,0)--(1,8);
				\draw (11,0)--(11,8);
				\draw[right] (12,1) node {$T_x=0$};
				\draw[right] (12,7) node {$T_x=\infty$};
				\draw[below] (1,0) node {$x=0$};
				\draw[below] (11,0) node {$x=\infty$};
				\filldraw (1,6) circle [radius=0.1]; 
				\draw[left=5pt] (1,6) node {$e_1$};
				\filldraw (1,4) circle [radius=0.1];
				\draw[left=5pt] (1,4) node {$e_2$};
				\filldraw (1,2) circle [radius=0.1];
				\draw (1,2) circle [radius=0.2];
				\draw[left=5pt] (1,2) node {$e_3,e_3q$};
				\filldraw (11,6) circle [radius=0.1];
				\draw[right=5pt] (11,6) node {$e_4$};
				\filldraw (11,4+2/3) circle [radius=0.1];
				\draw[right=5pt] (11,4+2/3) node {$e_5$};
				\filldraw (11,4-2/3) circle [radius=0.1];
				\draw[right=5pt] (11,4-2/3) node {$e_6$};
				\filldraw (11,2) circle [radius=0.1];
				\draw[right=5pt] (11,2) node {$e_7$};
				\filldraw (3,1) circle [radius=0.1];
				\draw[below=5pt] (3,1) node {$e_8$};
				\filldraw (5,1) circle [radius=0.1];
				\draw[below=5pt] (5,1) node {$e_9$};
				\filldraw (7,1) circle [radius=0.1];
				\draw (7,1) circle [radius=0.2];
				\draw[below=5pt] (7,1) node {$e_{10},e_{10}q$};
				\filldraw (9,1) circle [radius=0.1];
				\draw (9,1) circle [radius=0.2];
				\draw[below=5pt] (9,1) node {$e_{11},e_{11}q$};
				\filldraw (3,7) circle [radius=0.1];
				\draw[above=5pt] (3,7) node {$e_{12}$};
				\filldraw (5,7) circle [radius=0.1];
				\draw[above=5pt] (5,7) node {$e_{13}$};
				\filldraw (7,7) circle [radius=0.1];
				\draw[above=5pt] (7,7) node {$e_{14}$};
				\filldraw (9,7) circle [radius=0.1];
				\draw (9,7) circle [radius=0.2];
				\draw (9,7) circle [radius=0.3];
				\draw[above=5pt] (9,7) node {$e_{15},e_{15}q^{-1},e_{15}q^{-2}$};
			\end{tikzpicture}
		\end{center}
		The $q$-Fuchs relation of this example is $e_1e_2e_3^2q\cdot e_{12}e_{13}e_{14}e_{15}^3q^{-3}=e_4e_5e_6e_7\cdot e_8e_9e_{10}^2qe_{11}^2q$.
	\end{exa}
	We consider transforming the $q$-difference equation \eqref{trigeq} to a scalar $q$-difference equation of rank $3$.	
	
	\begin{prop}\label{propsc}
		We suppose that $y={}^\mathrm{T}(y_1,y_2,y_3)$ satisfies the equation \eqref{trigeq}.
		Then the function $z=((-x/e_1)_\infty(-x/e_2)_\infty(-x/e_3)_\infty)^{-1} y_1$ satisfies a Fuchsian $q$-difference equation 
		\begin{align}\label{scalareq}
			Lz=\sum_{i=0}^7 \sum_{j=0}^3 a_{i,j}x^i T_x^jz=\sum_{j=0}^3 P_j(x)T_x^jz=0,
		\end{align}
		which has the following point configuration.
		In addition there is $c$ such that
		\begin{align}\label{apparent}
			P_3(f/q)=c P_2(f),\ P_2(f/q)=c P_1(f),\ P_1(f/q)=c P_0(f).
		\end{align}
		Here $f$ is a suitable value determined by components of $A$.
		\begin{center}
			\begin{tikzpicture}
				\draw (0,1)--(12,1);
				\draw (0,7)--(12,7);
				\draw (1,0)--(1,8);
				\draw (11,0)--(11,8);
				\draw[right] (12,1) node {$T_x=0$};
				\draw[right] (12,7) node {$T_x=\infty$};
				\draw[below] (1,0) node {$x=0$};
				\draw[below] (11,0) node {$x=\infty$};
				\filldraw (1,4) circle [radius=0.1];
				\draw (1,4) circle [radius=0.3];
				\draw (1,4) circle [radius=0.2];
				\draw[left=5pt] (1,4) node {$1,q,q^2$};
				\filldraw (11,4) circle [radius=0.1];
				\draw (11,4) circle [radius=0.2];
				\draw (11,4) circle [radius=0.3];
				\draw[right=10pt] (11,4) node {$\kappa e_1e_2e_3,\kappa e_1e_2e_3/q,\kappa e_1e_2e_3/q^{2}$};
				\filldraw (2,1) circle [radius=0.1];
				\draw[below=5pt] (2,1) node {$-e_4$};
				\filldraw (3.2,1) circle [radius=0.1];
				\draw[below=5pt] (3.2,1) node {$-e_5$};
				\filldraw (4.4,1) circle [radius=0.1];
				\draw[below=5pt] (4.4,1) node {$-e_6$};
				\filldraw (5.6,1) circle [radius=0.1];
				\draw[below=5pt] (5.6,1) node {$-e_7$};
				\filldraw (6.8,1) circle [radius=0.1];
				\draw[below=5pt] (6.8,1) node {$-e_8/q$};
				\filldraw (8,1) circle [radius=0.1];
				\draw[below=5pt] (8,1) node {$-e_9/q$};
				\filldraw (10,1) circle [radius=0.1];
				\draw[below=5pt] (10,1) node {$f/q$};
				\filldraw (2.5,7) circle [radius=0.1];
				\draw (2.5,7) circle [radius=0.2];
				\draw[above=5pt] (2.5,7) node {$-e_1/q,-e_1/q^2$};
				\filldraw (5.5,7) circle [radius=0.1];
				\draw (5.5,7) circle [radius=0.2];
				\draw[above=5pt] (5.5,7) node {$-e_2/q,-e_2/q^2$};
				\filldraw (8.5,7) circle [radius=0.1];
				\draw (8.5,7) circle [radius=0.2];
				\draw[above=5pt] (8.5,7) node {$-e_3/q,-e_3/q^2$};
				\filldraw (10,7) circle [radius=0.1];
				\draw[above=5pt] (10,7) node {$f$};
			\end{tikzpicture}
		\end{center}
		
	\end{prop}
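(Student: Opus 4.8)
The plan is to eliminate $y_{2},y_{3}$ from the rank-$3$ system \eqref{trigeq}, to conjugate the resulting scalar equation by the gauge factor $g=(-x/e_{1})_\infty(-x/e_{2})_\infty(-x/e_{3})_\infty$, and then to identify the point configuration by local analysis at $x=0,\infty$ together with the $q$-Fuchs relation. For the elimination, write $A=(I+xX_{1})(I+xX_{2})(I+xX_{3})$ and $M_{k}(x)$ for the matrix with $T_{x}^{k}y=M_{k}y$, so that $M_{0}=I$, $M_{1}=A$ and $M_{k+1}=(T_{x}^{k}A)M_{k}$. The four scalars $T_{x}^{k}y_{1}$ $(k=0,1,2,3)$ are obtained by pairing $y$ with the first rows of $M_{0},\dots,M_{3}$; these four row vectors lie in a $3$-dimensional space, so there is a relation $\sum_{k=0}^{3}\widetilde P_{k}(x)T_{x}^{k}y_{1}=0$ whose coefficients are the $3\times 3$ minors of the stacked rows. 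Dividing by their greatest common factor gives the primitive scalar equation for $y_{1}$, of order $3$ (this is where one uses that $y_{1}$ is a cyclic vector). Because the denominators produced by the elimination carry the factors $(1+q^{l}x/e_{i})$ with $i=1,2,3$, and because $T_{x}^{k}g=g\big/\prod_{i=1}^{3}\prod_{l=0}^{k-1}(1+q^{l}x/e_{i})$, substituting $y_{1}=gz$ and clearing denominators cancels precisely the three singularities over $x=-e_{1},-e_{2},-e_{3}$ and yields $Lz=\sum_{j=0}^{3}P_{j}(x)T_{x}^{j}z=0$ with polynomial $P_{j}$; tracking degrees in this cancellation is what produces $\deg_{x}L=7$.

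Next I would read off the local data. At $x=0$: since $A(0)=I$, the system \eqref{trigeq} has a holomorphic fundamental solution at the origin, whose first components span a $3$-dimensional space of holomorphic solutions of the scalar equation in which the solutions vanishing at $x=0$ form a subspace of codimension one; generically the vanishing orders are then $0,1,2$, so the characteristic roots form the $3$rd cluster $\{1,q,q^{2}\}$, and $g(0)=1$ leaves this unchanged for $z$. At $x=\infty$: from $A(x)=\kappa x^{3}\bigl(I+O(x^{-1})\bigr)$ the system admits a formal fundamental solution whose scalar leading factor $u$ satisfies $T_{x}u=\kappa x^{3}u$, so $z$ behaves like $u/g$, and $T_{x}(u/g)=\kappa x^{3}\prod_{i=1}^{3}(1+x/e_{i})\,(u/g)$; reading off the constant attached to the $q$-exponents $0,-1,-2$ gives the $3$rd cluster $\{\kappa e_{1}e_{2}e_{3},\,\kappa e_{1}e_{2}e_{3}/q,\,\kappa e_{1}e_{2}e_{3}/q^{2}\}$ at $x=\infty$.

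The remaining data are the zeros of $P_{0}$ and $P_{3}$, i.e.\ the characteristic roots at $T_{x}=0$ and $T_{x}=\infty$. These come out of the explicit coefficients: apart from one extra zero $x=f$ (of $P_{3}$) and $x=f/q$ (of $P_{0}$), they are built from the zeros $-e_{4},\dots,-e_{9}$ of $\det A$ that are not removed by the gauge and from the $q$-shifted zeros $-e_{i}/q,-e_{i}/q^{2}$ $(i=1,2,3)$ of $g$ and its shifts, with the multiplicities shown in the figure. The value $f$ and the cluster structure at $T_{x}=0,\infty$ are then pinned down by imposing the $q$-Fuchs relation on all the data already determined, which leaves exactly one apparent singularity, at $x=f$. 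Finally, \eqref{apparent} is the non-logarithmicity condition at that apparent point: since $z$ is, up to the invertible gauge $g$, the first component of a solution of the regular system \eqref{trigeq} and $\det A$ does not vanish at $x=f$ or the relevant shifts, the solutions of $Lz=0$ are holomorphic there; writing out the local recurrence at $x=f$ and $x=f/q$ — which involves only $P_{j}(f)$ and $P_{j}(f/q)$, with $P_{3}(f)=0=P_{0}(f/q)$ — and demanding a full-dimensional holomorphic solution space collapses to the three proportionalities $P_{3}(f/q)=cP_{2}(f)$, $P_{2}(f/q)=cP_{1}(f)$, $P_{1}(f/q)=cP_{0}(f)$ with a common constant $c$.

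The step I expect to be the main obstacle is this last bookkeeping at $T_{x}=0$ and $T_{x}=\infty$: verifying that $P_{0}$ and $P_{3}$ have exactly the listed zeros with the stated cluster structure, and establishing the apparentness relations \eqref{apparent}. This is the part that genuinely needs the explicit shape of the coefficients coming out of the elimination and the gauge cancellation — equivalently, a careful count showing that the exponents already fixed at $x=0$ and $x=\infty$, together with the order $3$, can be completed to a Fuchsian $q$-difference equation only by adjoining a single apparent point of the indicated form.
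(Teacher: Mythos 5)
Your outline follows the same general skeleton as the paper's proof (elimination of $y_2,y_3$ via $3\times3$ minors of the stacked first rows of $I$, $A$, $(T_xA)A$, $(T_x^2A)(T_xA)A$; gauge by $g$; local analysis at $x=0,\infty$; the $q$-Fuchs relation to fix the last root), but the decisive content of the proposition is exactly the part you defer as ``the main obstacle,'' and it is not something the $q$-Fuchs relation or genericity can supply. The point configuration at $T_x=0,\infty$ rests on a chain of concrete divisibility facts about the raw minors $p_j$: all $p_j$ are divisible by $x^3$; all $p_j$ vanish at $x=-e_8$ and $x=-e_9$ (the paper proves this by choosing a gauge $C$ preserving the first component so that $(I+(-e_8)X_3)C$ has vanishing third column, which uses the lower-triangular form of $X_3$ and the choice of $y_1$ as cyclic vector); the lowest coefficient $p_0$ (proportional to $\det$ of the full $3\times3$ matrix, hence divisible by $\det A$) additionally vanishes at $-e_8/q$ and $-e_9/q$; and the resulting degree count ($\deg p_3''=1$) forces a single extra root $f$. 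This is what produces the asymmetric pattern in the figure ($-e_8/q,-e_9/q$ shifted, $-e_4,\dots,-e_7$ not, a unique apparent point $f$), and nothing in your proposal generates it: taking a ``greatest common factor'' loses precisely this information, and the $q$-Fuchs relation fixes only one remaining root once everything else is already known. (Your gauge bookkeeping is also off: with $T_x^k g=g\big/\prod_i\prod_{l=0}^{k-1}(1+q^lx/e_i)$, substituting $y_1=gz$ does not by itself ``cancel the singularities over $-e_1,-e_2,-e_3$''; one needs the proven fact that $p_0''$ actually contains the factors $(x+e_1)(x+e_2)(x+e_3)$, while the top coefficient acquires the double roots $-e_i/q,-e_i/q^2$.)

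The apparentness relations \eqref{apparent} are likewise only gestured at. Your ``local recurrence / full-dimensional holomorphic solution space'' argument is a heuristic: as stated it neither constructs the relevant solution space nor explains why a single constant $c$ links all three ratios. The paper's proof is an algebraic identity: since $L$ annihilates the first component of every solution, the matrix $P_3(T_x^2B)(T_xB)B+P_2(T_xB)B+P_1B+P_0I$ has zero first row; evaluating at $x=f$ (where $P_3(f)=0$) and at $x=f/q$ (where $P_0(f/q)=0$, then cancelling an invertible $B(f/q)$ on the right) shows that the two triples $(P_2(f),P_1(f),P_0(f))$ and $(P_3(f/q),P_2(f/q),P_1(f/q))$ satisfy the same linear system built from the first rows of $B(qf)B(f)$, $B(f)$, $I$, which generically has rank $2$; proportionality, i.e.\ \eqref{apparent}, follows. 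A solution-theoretic version of your idea could be made to work along similar lines, but you would still have to establish the rank statement, so the gap is not closed by your sketch. Similarly, the exponents at $x=0$ and $x=\infty$ are proved in the paper by rewriting $L$ in the operators $D_0=\tfrac1x(1-T_x)$ and $D_\infty=x(1-\kappa'T_x)$ with coefficients bounded at the respective points, rather than by the ``vanishing orders $0,1,2$'' genericity argument you invoke; that part of your sketch is repairable, but as written it is not a proof either.
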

	\begin{proof}
		See Appendix \ref{appC}.
	\end{proof}
	\begin{rem}
		Imitating differential cases, we call roots of the highest/lowest coefficient on $T_x$ for a scalar linear $q$-difference operator ``singularities''.
		When we construct a scalar $q$-difference equation from a system $T_xy=Ay$ of first order $q$-difference equations, roots of $\det A$ (or $\det T_x^{\pm 1}A$, $\det T_x^{\pm 2}A,\ldots$) appears as singularities. 
		Since $f q^{\mathbb{Z}}$ does not have a root of $\det A$, $f$ is an ``apparent singularity'' of the $q$-difference equation \eqref{scalareq}.
		The condition \eqref{apparent} would give a condition of apparent singularity.
		We remark that the notion of apparent singularities is introduced and several properties are discussed in \cite{FJM}.
		See also \cite{Park}.
	\end{rem}
	In \cite{MY}, a quantum curve with $W(E_8^{(1)})$-symmetry is discussed and quantum representation of $W(E_8^{(1)})$ is given by transformations of this quantum curve.
	Let $x$,$y$ be $q$-commutative variables, i.e. $yx=qxy$.
	The quantum curve $C$ is given as follows:
	\begin{itemize}
		\item $C$ : $F(x,y)=0$.
		\item $F(x,y)$ : \mbox{polynomial of $x$ and $y$, $\deg_x F(x,y)=6$, $\deg_y F(x,y)=3$}.
		\item The curve $C$ passes through the following $11$ points with corresponding multiplicities:
		\begin{align}
			&(-e_i,0)\mbox{ : multiplicity $1$}\ (i=1,2,3,4,5,6),\\
			&\left(-\frac{h_1}{e_i},\infty\right)\mbox{ : multiplicity $2$}\ (i=7,8,9),\\
			&\left(\infty,-\frac{e_{10}}{h_2}\right)\mbox{ : multiplicity $3$},\\
			&\left(0,-\frac{1}{e_{11}}\right)\mbox{ : multiplicity $3$}
		\end{align}
	\end{itemize}
	Since the variables $x$ and $y$ is $q$-commutative, the polynomial $F(x,y)$ can be regarded as a linear $q$-difference operator with respect to $x$.
	In this sense, the curve $C$ can be regarded as a $q$-difference equation.
	The $q$-difference equation in Proposition \ref{propsc} is a non-autonomous cases of the curve $C$ with suitable changes of parameters and some gauge transformation.
	\begin{rem}\label{remsc}
		In the context of Painlev\'e equations, the quantum curve often appears as a specialization of the Lax linear equation for non-autonomous Painlev\'e equation.
		Although we do not construct a deformation equation of \eqref{defeq} here, it would be expected that the $q$-Painlev\'e equation of type $E_8^{(1)}$ is derived by a compatibility condition of \eqref{defeq} and its deformation.
		We note that if we specialize one of two accessory parameters as $f=-e_7$, operator $L$ \eqref{scalareq} reduces to a polynomial bi-degree $(6,3)$.
		This specialization recovers the original Moriyama--Yamada curve.
		In other words, the specialization $f=-e_7$ is an autonomization of our equation.
	\end{rem}

	\section{Summary and discussions}\label{secsumm}
	In this paper, we constructed a $q$-difference equation \eqref{defeq} which has the affine Weyl group symmetry of type $E_8^{(1)}$.
	We also found that one of the generators is given by using the $q$-middle convolution, and the others are adjacent transpositions of parameters.
	In Section \ref{secqmc}, we introduced the $q$-middle convolution via $q$-Okubo type equation.
	This formulation clarifies a condition of path.
	The main result of this paper is Theorem \ref{E8mc} which gives a transformation of the equation \eqref{defeq} by means of the $q$-middle convolution.
	It follows from this result that the equation has $W(E_8^{(1)})$-symmetry (see Proposition \ref{propdynkin}).
	In Section \ref{secsc}, we reduced our equation \eqref{defeq} to the scalar equation \eqref{scalareq} and characterized it by the point configuration.
	This scalar equation is a non-autonomous extension of Moriyama--Yamada's curve \cite{MY} which appears in the theory of topological strings and is applied to the quantum Painlev\'e equation.
	 Therefore this opens the possibility of applying our results to quantum integrable systems.
	
	It is important to give $s_3$-action discussed in Conjecture \ref{conjrepE8}.
	It is also important to describe these conditions by means of some assumptions of the matrix $A$ \eqref{defeq}.
	There are also many related problems.
	\begin{itemize}
		\item[1.] The equation \eqref{defeq} has $W(E_8^{(1)})$-symmetry.
		It is fundamental and very important to construct an explicit expression of \eqref{defeq} and its deformation equation.
		If these are constructed, we might get a Lax formalization of the $q$-Painlev\'e equation of type $E_8^{(1)}$ (see also Remark \ref{remsc}).
		It is also interesting to compare established Lax pairs of $q$-$P(E_8^{(1)})$ (cf. \cite{Yamada}).
		We remark that the Lax linear equation in \cite{Yamada} is second order, although our equation is third order.
		See also \cite{NRY,Takemuradeg} for related topics with the Lax equation in \cite{Yamada}.
		\item[2.] To give special solutions of \eqref{defeq} is interesting.
		Due to the characterization \eqref{spectral}, the equation \eqref{defeq} can be regarded as a $q$-analog of rank $3$ Fuchsian differential equation with $3$ singularities on $\mathbb{P}^{1}=\mathbb{C}\cup\{\infty\}$.
		This differential equation admits special solutions in terms of the generalized hypergeometric series ${}_3F_2$ and the Dotsenko--Fateev integral \cite{DF}.
		In this sense it is expected that the equation \eqref{defeq} has special solutions by some $q$-hypergeometric series and integrals.
		For the $q$-Selberg type integral including a $q$-analog of Dotsenko--Fateev integrals, and its properties, see \cite{Ito2020,Ito2023}.
		A special solution of \eqref{defeq} in terms of the very-well poised $q$-hypergeometric series ${}_{10}W_9$ is obtained.
		This result is discussed in the next paper.
		\item[3.] In Sakai's classification \cite{Sakai}, the $q$-Painlev\'e equation $q$-$P(E_8^{(1)})$ degenerates to it of type $E_7^{(1)}$, $E_6^{(1)}$, $D_5^{(1)}$ and several equations.
		It is very important to discuss degenerations of \eqref{defeq}.
		In \cite{Park}, a Lax pair of $q$-$P(E_6^{(1)})$ is given by a  rank $3$ $q$-difference equation of the form
		\begin{align}
			T_xy=(A^{(0)}+xA^{(1)})y,
		\end{align}
		and its deformation equation.
		This rank $3$ equation is characterized by the spectral type $(111;111;111)$, thus this is related to a degeneration of our equation \eqref{defeq}.
	\end{itemize}
	
	\appendix
	\section{Proof of Theorem \ref{E8mc}}\label{appA}
	In this appendix, we prove Theorem \ref{E8mc}.
	\begin{proof}[Proof of Theorem \ref{E8mc}]
		The proof proceeds by analyzing the invariant subspaces  $\mathcal{K}$ and $\mathcal{L}$, followed by an explicit computation of the transformed coefficient matrix.
		First, by the gauge transformation on $g$, the equation \eqref{defeq} is transformed to
		\begin{align}\label{gauge}
			T_xy=A'y,\quad A'=\frac{A}{(1+x/e_1)(1+x/e_2)(1+x/e_3)}.
		\end{align}
		Rewriting this equation by using $D_x$, since $A=I+O(x)$ at $x=0$, we get
		\begin{align}\label{DxAi}
			D_xy=\sum_{i=1}^3 \frac{A_i}{x+e_i}y,\quad \sum_{i=1}^3 \frac{A_i}{x+e_i}=\frac{I-A'}{(1-q)x},
		\end{align}
		where $A_i$ is a constant matrix.
		We put
		\begin{align}
			\label{defG1}G_1=\begin{bmatrix}
				q^\lambda A_1+[\lambda]I&q^\lambda A_2&q^\lambda A_3\\O&O&O\\O&O&O
			\end{bmatrix},
			\\
			\label{defG2}G_2=\begin{bmatrix}
				O&O&O\\
				q^\lambda A_1&q^\lambda A_2+[\lambda]I&q^\lambda A_3\\O&O&O
			\end{bmatrix},
			\\
			\label{defG3}G_3=\begin{bmatrix}
				O&O&O\\
				O&O&O\\
				q^\lambda A_1&q^\lambda A_2&q^\lambda A_3+[\lambda]I
			\end{bmatrix}.
		\end{align}
		We consider the invariant spaces
		\begin{align}
			\mathcal{K}=\begin{bmatrix}
				\mathrm{Ker}(A_1)\\\mathrm{Ker}(A_2)\\\mathrm{Ker}(A_3)
			\end{bmatrix},\quad \mathcal{L}=\mathrm{Ker}(G_1+G_2+G_3),
		\end{align}
		in particular we discuss the dimensions of $\mathcal{K}$ and $\mathcal{L}$.
		Due to the construction of $A_i$ \eqref{DxAi}, we have
		\begin{align}
			\notag A_i&=\lim_{x\to-e_i}\frac{I-A'}{(1-q)x}(x+e_i)
			\\
			\notag&=\lim_{x\to-e_i}\frac{I-A(x)/((1+x/e_1)(1+x/e_2)(1+x/e_3))}{(1-q)x}(x+e_i)
			\\
			&\propto A(-e_i).
		\end{align}
		Since $\displaystyle \det A(x)=\kappa^3 \prod_{i=1}^9 (x+e_i)$ and $e_i\neq e_j$ $(i\neq j)$, we find $\dim\mathrm{Ker}(A_i)=1$.
		We put $a_i\in \mathrm{Ker}(A_i)$ as $a_i\neq 0$, and
		\begin{align}
			k_1=\begin{bmatrix}
				a_1\\0\\0
			\end{bmatrix},
			{k}_2=\begin{bmatrix}
				0\\a_2\\0
			\end{bmatrix},
			k_3=\begin{bmatrix}
				0\\0\\a_3
			\end{bmatrix},
		\end{align}
		then $\{k_1, k_2,k_3\}$ form a basis of $\mathcal{K}$.
		In particular we have $\dim\mathcal{K}=3$.
		We calculate the eigenpolynomial of $G_1+G_2+G_3$ as follows:
		\begin{align}
			\notag&\det (G_1+G_2+G_3-z I)
			\\
			\notag&=\det \left(q^\lambda \begin{bmatrix}
				A_1&A_2&A_3\\A_1&A_2&A_3\\A_1&A_2&A_3
			\end{bmatrix}+([\lambda]-z)\begin{bmatrix}
				I&O&O\\O&I&O\\O&O&I
			\end{bmatrix}\right)
			\\
			\notag&=\det \left(q^\lambda \begin{bmatrix}
				A_1&A_2&A_3\\O&O&O\\O&O&O
			\end{bmatrix}+([\lambda]-z)\begin{bmatrix}
				I&O&O\\-I&I&O\\-I&O&I
			\end{bmatrix}\right)
			\\
			\notag&=\det \left(q^\lambda \begin{bmatrix}
				A_1+A_2+A_3&A_2&A_3\\O&O&O\\O&O&O
			\end{bmatrix}+([\lambda]-z)\begin{bmatrix}
				I&O&O\\O&I&O\\O&O&I
			\end{bmatrix}\right)
			\\
			&=([\lambda]-z)^6 \det (q^\lambda (A_1+A_2+A_3)+([\lambda]-z)I).
		\end{align}
		Due to \eqref{DxAi}, we have
		\begin{align}
			\notag A_1+A_2+A_3&=\lim_{x\to\infty}\frac{I-A'}{1-q}
			\\
			&=\lim_{x\to\infty}\frac{I-A(x)/((1+x/e_1)(1+x/e_2)(1+x/e_3))}{1-q}=\frac{1-\kappa e_1e_2e_3}{1-q}I.
		\end{align}
		Therefore we get
		\begin{align}
			\det (q^\lambda (A_1+A_2+A_3)+([\lambda]-z)I)=\left(\frac{1-q^\lambda \kappa e_1e_2 e_3}{1-q}-z\right)^3.
		\end{align}
		We put $q^\lambda =(\kappa e_1e_2e_3)^{-1}$, then we have $\dim \mathcal{L}\leq 3$.
		On the other hand we have
		\begin{align}
			\notag&(G_1+G_2+G_3)\begin{bmatrix}
				v\\v\\v
			\end{bmatrix}\quad (v\in\mathbb{C}^3)
			\\
			\notag&=\begin{bmatrix}
				(q^\lambda (A_1+A_2+A_3)+[\lambda]I)v\\
				(q^\lambda (A_1+A_2+A_3)+[\lambda]I)v\\
				(q^\lambda (A_1+A_2+A_3)+[\lambda]I)v
			\end{bmatrix}
			\\
			&=\begin{bmatrix}
				Ov\\Ov\\Ov
			\end{bmatrix}=0.
		\end{align}
		Therefore we have $\mathcal{L}=\{{}^T[v,v,v]\mid v\in\mathbb{C}^3\}$, $\dim\mathcal{L}=3$ and $\dim(\mathcal{K}+\mathcal{L})=6$.
		Let $l_1,l_2,l_3$ be a basis of $\mathcal{L}$ and $u_1,u_2,u_3$ be a basis of the complementary space of $\mathcal{K}+\mathcal{L}$ in $\mathbb{C}^9$.
		We have
		\begin{align}
			R^{-1}G_i R=\begin{bmatrix}
				\tilde{G}_i&O\\
				\ast&H_i
			\end{bmatrix},\quad R=[u_1,u_2,u_3,k_1,k_2,k_3,l_1,l_2,l_3],
		\end{align}
		where $\tilde{G}_i$ is a $3\times 3$ matrix.
		Using $G_i$, we get a equation
		\begin{align}
			D_xy=\sum_{i=1}^3\frac{\tilde{G}_i}{x+e_i}y,
		\end{align}
		by the $q$-middle convolution $mc_\lambda$.
		Rewriting this equation we have
		\begin{align}
			T_xy=\left[I-(1-q)x\sum_{i=1}^3\frac{\tilde{G}_i}{x+e_i}\right]y.
		\end{align}
		Applying the gauge transformation by $g^{-1}$, we finally get
		\begin{align}
			T_xy=\tilde{A}y,\quad \tilde{A}=\tilde{A}(x)=(1+x/e_1)(1+x/e_2)(1+x/e_3)\left[I-(1-q)x\sum_{i=1}^3\frac{\tilde{G}_i}{x+e_i}\right].
		\end{align}
		It is obvious that $\tilde{A}$ is a polynomial of degree three in $x$.
		
		In the following we show properties of $\tilde{A}$, i.e. we prove that $\tilde{A}$ is in the form of \eqref{Apro1} and satisfy \eqref{Apro2}.
		Clearly we have $\tilde{A}(0)=I$.
		The highest term of $\tilde{A}$ is
		\begin{align}
			\lim_{x\to\infty} x^{-3}\tilde{A}(x)=\frac{1}{e_1e_2e_3}\left[I-(1-q)\sum_{i=1}^3\tilde{G}_i\right].
		\end{align}
		We calculate $\tilde{G}_1+\tilde{G}_2+\tilde{G}_3$ in the following.
		We remark that
		\begin{align}
			R^{-1}(G_1+G_2+G_3)R=\begin{bmatrix}
				\tilde{G}_1+\tilde{G}_2+\tilde{G}_3&O\\
				\ast&H_1+H_2+H_3
			\end{bmatrix}.
		\end{align}
		Since 
		\begin{align}
			R=[{u}_1,{u}_2,{u}_3,{k}_1,{k}_2,{k}_3,{l}_1,{l}_2,{l}_3],\quad {l}_i\in\mathcal{L}=\mathrm{Ker}(G_1+G_2+G_3),
		\end{align}
		we have
		\begin{align}
			R^{-1}(G_1+G_2+G_3)R=[\ast,\ast,\ast,\ast,\ast,\ast,{0},{0},{0}].
		\end{align}
		Summarizing these, we obtain
		\begin{align}
			R^{-1}(G_1+G_2+G_3)R=\begin{bmatrix}
				\tilde{G}_1+\tilde{G}_2+\tilde{G}_3&O_{3\times 3}&O_{3\times3}\\
				J_1&J_2&O_{3\times 3}\\
				J_3&J_4&O_{3\times3}
			\end{bmatrix}.
		\end{align}
		We recall that $G_1+G_2+G_3$ has the eigenvalue $[\lambda]$ with multiplicity $6$.
		We also have
		\begin{align}
			G_1+G_2+G_3-[\lambda]I=q^\lambda\begin{bmatrix}
				A_1&A_2&A_3\\A_1&A_2&A_3\\A_1&A_2&A_3
			\end{bmatrix}\to q^\lambda \begin{bmatrix}
				A_1&A_2&A_3\\O&O&O\\O&O&O
			\end{bmatrix},
		\end{align}
		by the elementary row operations.
		Therefore the dimension of eigenspace of $[\lambda]$ is $6$
		This leads us to
		\begin{align}\label{tildeGsum}
			\begin{bmatrix}
				\tilde{G}_1+\tilde{G}_2+\tilde{G}_3&O_{3\times 3}\\
				J_1&J_2
			\end{bmatrix}=[\lambda]I,
		\end{align}
		in particular we have$\tilde{G}_1+\tilde{G}_2+\tilde{G}_3=[\lambda]I$.
		In conclusion, the highest term of $\tilde{A}$ is
		\begin{align}
			\frac{1}{e_1e_2e_3}\left[I-(1-q)\sum_{i=1}^3\tilde{G}_i\right]=\frac{1}{e_1e_2e_3}(1-(1-q)[\lambda])I=\frac{q^\lambda}{e_1e_2e_3}I=\frac{1}{\kappa(e_1e_2e_3)^2}I=\tilde{\kappa}I.
		\end{align}
		This completes the proof that $\tilde{A}$ is in the form of \eqref{Apro1}.

		We prove \eqref{Apro2} in the following.
		We first show 
		\begin{align}\label{H}
			H_1=\begin{bmatrix}
				[\lambda]&0&0&0&0&0\\
				0&0&0&0&0&0\\
				0&0&0&0&0&0\\
				0&0&0&0&0&0\\
				0&0&0&0&0&0\\
				0&0&0&0&0&0
			\end{bmatrix},\quad 
			H_2=\begin{bmatrix}
				0&0&0&0&0&0\\
				0&[\lambda]&0&0&0&0\\
				0&0&0&0&0&0\\
				0&0&0&0&0&0\\
				0&0&0&0&0&0\\
				0&0&0&0&0&0
			\end{bmatrix},\quad
			H_3=\begin{bmatrix}
				0&0&0&0&0&0\\
				0&0&0&0&0&0\\
				0&0&[\lambda]&0&0&0\\
				0&0&0&0&0&0\\
				0&0&0&0&0&0\\
				0&0&0&0&0&0
			\end{bmatrix}.
		\end{align}
		We only show the case of $H_1$ here.
		Other cases can be proved in a similar manner.
		We divide a $9\times 6$ matrix $\begin{bmatrix}
			O_{3\times 6}\\H_1
		\end{bmatrix}$ into column vectors as
		\begin{align}
			\begin{bmatrix}
				O\\H_1
			\end{bmatrix}=[{h}_1,{h}_2,{h}_3,{h}_4,{h}_5,{h}_6].
		\end{align}
		We recall that
		\begin{align}
			R^{-1}G_1R=\begin{bmatrix}
				\tilde{G}_1&O\\\ast&H_1
			\end{bmatrix}=[\ast,\ast,\ast,{h}_1,{h}_2,{h}_3,{h}_4,{h}_5,{h}_6].
		\end{align}
		Since
		\begin{align}
			&G_1=\begin{bmatrix}
				q^\lambda A_1+[\lambda]I&q^\lambda A_2&q^\lambda A_3\\
				O&O&O\\O&O&O
			\end{bmatrix},
			\\
			&R=[{u}_1,{u}_2,{u}_3,{k}_1,{k}_2,{k}_3,{l}_1,{l}_2,{l}_3],
			\\
			&{k}_1=\begin{bmatrix}
				{a}_1\\{0}\\{0}
			\end{bmatrix},
			{k}_2=\begin{bmatrix}
				{0}\\{a}_2\\{0}
			\end{bmatrix},
			{k}_3=\begin{bmatrix}
				{0}\\{0}\\{a}_3
			\end{bmatrix},\quad {a}_i\in\mathrm{Ker}(A_i),
		\end{align}
		we have
		\begin{align}
			G_1R=[\ast,\ast,\ast,[\lambda]{k}_1,{0},{0},\ast,\ast,\ast].
		\end{align}
		Due to the definition of $R$, we find
		\begin{align}
			R^{-1}G_1R=[\ast,\ast,\ast,[\lambda]{p}_4,{0},{0},\ast,\ast,\ast],
		\end{align}
		where ${p}_i$ is the standard unit vector in $\mathbb{C}^9$.
		Hence, we get 
		\begin{align}\label{h1}
			[{h}_1,{h}_2,{h}_3]=[[\lambda]{p}_4,{0},{0}].
		\end{align}
		Since
		\begin{align}
			G_1=\begin{bmatrix}
				\ast&\ast&\ast\\O&O&O\\O&O&O
			\end{bmatrix},\quad G_2=\begin{bmatrix}
				O&O&O\\\ast&\ast&\ast\\O&O&O
			\end{bmatrix},\quad G_3=\begin{bmatrix}
				O&O&O\\O&O&O\\\ast&\ast&\ast
			\end{bmatrix},
		\end{align}
		we have
		\begin{align}
			G_1R=\begin{bmatrix}
				\ast&\ast&F_1\\O&O&O\\O&O&O
			\end{bmatrix},\quad G_2R=\begin{bmatrix}
				O&O&O\\\ast&\ast&F_2\\O&O&O
			\end{bmatrix},\quad G_3R=\begin{bmatrix}
				O&O&O\\O&O&O\\\ast&\ast&F_3
			\end{bmatrix},
		\end{align}
		where $F_i$ is a suitable $3\times3$ matrix.
		Thus we have
		\begin{align}
			(G_1+G_2+G_3)R=\begin{bmatrix}
				\ast&\ast&F_1\\\ast&\ast&F_2\\\ast&\ast&F_3
			\end{bmatrix}
		\end{align}
		By the definition of $R$:
		\begin{align}
			R=[{u}_1,{u}_2,{u}_3,{k}_1,{k}_2,{k}_3,{l}_1,{l}_2,{l}_3],\quad {l}_i\in\mathcal{L}=\mathrm{Ker}(G_1+G_2+G_3),
		\end{align}
		we have $F_1=F_2=F_3=O$.
		Hence, we obtain 
		\begin{align}
			\begin{bmatrix}
				\tilde{G}_1&O\\\ast&H_1
			\end{bmatrix}=R^{-1}G_1R=R^{-1}\begin{bmatrix}
				\ast&\ast&F_1\\O&O&O\\O&O&O
			\end{bmatrix}=\begin{bmatrix}
				\ast&\ast&O\\\ast&\ast&O\\\ast&\ast&O
			\end{bmatrix},
		\end{align}
		in particular we have
		\begin{align}\label{h2}
			[{h}_4,{h}_5,{h}_6]=[{0},{0},{0}].
		\end{align}
		Due to \eqref{h1}, \eqref{h2}, we finally
		\begin{align}
			H_1=\begin{bmatrix}
				[\lambda]&0&0&0&0&0\\
				0&0&0&0&0&0\\
				0&0&0&0&0&0\\
				0&0&0&0&0&0\\
				0&0&0&0&0&0\\
				0&0&0&0&0&0
			\end{bmatrix}.
		\end{align}
		This completes the proof of \eqref{H}.
		
		We recall that
		\begin{align}
			R^{-1}G_i R=\begin{bmatrix}
				\tilde{G}_i&O\\
				\ast&H_i
			\end{bmatrix},\quad \tilde{A}=(1+x/e_1)(1+x/e_2)(1+x/e_3)\left[I-(1-q)x\sum_{i=1}^3\frac{\tilde{G}_i}{x+e_i}\right].
		\end{align}
		We calculate $\displaystyle \det \left[I-(1-q)x\sum_{i=1}^3\frac{{G}_i}{x+e_i}\right]$ as follows:
		\begin{align}
			\notag&\det \left[I-(1-q)x\sum_{i=1}^3\frac{{G}_i}{x+e_i}\right]
			\\
			\notag&=\det \left[I-(1-q)x\begin{bmatrix}
				\dfrac{q^\lambda A_1+[\lambda]}{x+e_1}&\dfrac{q^\lambda A_2}{x+e_1}&\dfrac{q^\lambda A_3}{x+e_1}
				\\
				\dfrac{q^\lambda A_1}{x+e_2}&\dfrac{q^\lambda A_2+[\lambda]}{x+e_2}&\dfrac{q^\lambda A_3}{x+e_2}
				\\
				\dfrac{q^\lambda A_1}{x+e_3}&\dfrac{q^\lambda A_2}{x+e_3}&\dfrac{q^\lambda A_3+[\lambda]}{x+e_3}
			\end{bmatrix}\right]
			\\
			\notag&=\frac{1}{((x+e_1)(x+e_2)(x+e_3))^3}
			\\
			\notag&\quad\times\det \left[\begin{bmatrix}
				x+e_1&&\\&x+e_2&\\&&x+e_3
			\end{bmatrix}-(1-q)[\lambda]I-(1-q)xq^\lambda\begin{bmatrix}
				A_1&A_2&A_3\\A_1&A_2&A_3\\A_1&A_2&A_3
			\end{bmatrix}\right]
			\\
			\notag&=\frac{1}{((x+e_1)(x+e_2)(x+e_3))^3}
			\\
			\notag&\quad\times\det\left[\begin{bmatrix}
				q^\lambda x+e_1&&\\&q^\lambda x+e_2&\\&&q^\lambda x+e_3
			\end{bmatrix}-(1-q)xq^\lambda\begin{bmatrix}
				A_1&A_2&A_3\\A_1&A_2&A_3\\A_1&A_2&A_3
			\end{bmatrix}\right]
			\\
			\notag&=\frac{1}{((x+e_1)(x+e_2)(x+e_3))^3}
			\\
			\notag&\quad\times\det \left[\begin{bmatrix}
				q^\lambda x+e_1&&\\-(q^\lambda x+e_1)&q^\lambda x+e_2&\\-(q^\lambda x+e_1)&&q^\lambda x+e_3
			\end{bmatrix}-(1-q)xq^\lambda\begin{bmatrix}
				A_1&A_2&A_3\\&&\\&&
			\end{bmatrix}\right]
			\\
			\notag&=\frac{((q^\lambda x+e_1)(q^\lambda x+e_2)(q^\lambda x+e_3))^3}{((x+e_1)(x+e_2)(x+e_3))^3}
			\\
			\notag&\quad\times\det \left[\begin{bmatrix}
				I&&\\-I&I&\\-I&&I
			\end{bmatrix}-(1-q)xq^\lambda\begin{bmatrix}
				\dfrac{A_1}{q^\lambda x+e_1}&\dfrac{A_2}{q^\lambda x+e_2}&\dfrac{A_3}{q^\lambda x+e_3}\\&&\\&&
			\end{bmatrix}\right]
			\\
			\notag&=\frac{((q^\lambda x+e_1)(q^\lambda x+e_2)(q^\lambda x+e_3))^3}{((x+e_1)(x+e_2)(x+e_3))^3}
			\\
			\notag&\quad\times\det\left[\begin{bmatrix}
				I&&\\&I&\\&&I
			\end{bmatrix}-(1-q)xq^\lambda \begin{bmatrix}
				\displaystyle \sum_{i=1}^3\dfrac{A_i}{q^\lambda x+e_i}&\dfrac{A_2}{q^\lambda x+e_2}&\dfrac{A_3}{q^\lambda x+e_3}\\&&\\&&
			\end{bmatrix}\right]
			\\
			&=\frac{((q^\lambda x+e_1)(q^\lambda x+e_2)(q^\lambda x+e_3))^3}{((x+e_1)(x+e_2)(x+e_3))^3}\det\left[I-(1-q)xq^\lambda \displaystyle \sum_{i=1}^3\dfrac{A_i}{q^\lambda x+e_i}\right].
		\end{align}
		Since
		\begin{align}
			&\sum_{i=1}^3 \frac{A_i}{x+e_i}=\frac{I-A'(x)}{(1-q)x},\quad A'(x)=\dfrac{A(x)}{(1+x/e_1)(1+x/e_2)(1+x/e_3)},\\
			&\det A(x)=\kappa^3\prod_{i=1}^9(x+e_i),
		\end{align}
		we obtain
		\begin{align}
			&\notag\det\left[I-(1-q)xq^\lambda \displaystyle \sum_{i=1}^3\dfrac{A_i}{q^\lambda x+e_i}\right]=\det A'(q^\lambda x)\\
			&=\frac{1}{((1+q^\lambda x/e_1)(1+q^\lambda x/e_2)(1+q^\lambda x/e_3))^3}\kappa^3\prod_{i=1}^9(q^\lambda x+e_i).
		\end{align}
		It follows that
		\begin{align}\label{detG}
			\det \left[I-(1-q)x\sum_{i=1}^3\frac{{G}_i}{x+e_i}\right]=\frac{(\kappa e_1e_2e_3)^3}{((x+e_1)(x+e_2)(x+e_3))^3}\prod_{i=1}^9(q^\lambda x+e_i)
		\end{align}
		On the other hand,  we have
		\begin{align}
			&\det \left[I-(1-q)x\sum_{i=1}^3\frac{{G}_i}{x+e_i}\right]=\det R^{-1}\left[I-(1-q)x\sum_{i=1}^3\frac{{G}_i}{x+e_i}\right]R
			\notag
			\\
			&=\det\left[I-(1-q)x\sum_{i=1}^3\dfrac{\begin{bmatrix}
					\tilde{G}_i&O\\
					\ast&H_i
			\end{bmatrix}}{x+e_i}\right]
			\notag
			\\
			&=\det\left[I-(1-q)x\sum_{i=1}^3\dfrac{\tilde{G}_i}{x+e_i}\right]\cdot \det\left[I-(1-q)x\sum_{i=1}^3\dfrac{H_i}{x+e_i}\right].
		\end{align}
		Using the expression \eqref{H} of $H_i$, we get 
		\begin{align}\label{detH}
			\det\left[I-(1-q)x\sum_{i=1}^3\dfrac{H_i}{x+e_i}\right]=\prod_{i=1}^3 \left(1-(1-q)x\frac{[\lambda]}{x+e_i}\right)=\prod_{i=1}^3\frac{q^\lambda x+e_i}{x+e_i}.
		\end{align}
		Summarizing \eqref{detG} and \eqref{detH}, we have
		\begin{align}\label{dettildeAapp}
			\det\left[I-(1-q)x\sum_{i=1}^3\dfrac{\tilde{G}_i}{x+e_i}\right]=\frac{(\kappa e_1e_2e_3)^3}{((x+e_1)(x+e_2)(x+e_3))^{2}}\prod_{i=4}^9(q^\lambda x+e_i).
		\end{align}
		Since
		\begin{align}
			\tilde{A}=(1+x/e_1)(1+x/e_2)(1+x/e_3)\left[I-(1-q)x\sum_{i=1}^3\dfrac{\tilde{G}_i}{x+e_i}\right],
		\end{align}
		we finally find
		\begin{align}
			\notag	\det\tilde{A}&=((1+x/e_1)(1+x/e_2)(1+x/e_3))^3\cdot \frac{(\kappa e_1e_2e_3)^3}{((x+e_1)(x+e_2)(x+e_3))^{2}}\prod_{i={4}}^9(q^\lambda x+e_i)\\&=\tilde\kappa^3\prod_{i=1}^9(x+\tilde{e}_i).
		\end{align}
		This completes the proof of Theorem \ref{E8mc}.
	\end{proof}
	\section{Proof of Proposition \ref{propbraid}}\label{appB}
	In this appendix, we prove Proposition \ref{propbraid}.
	\begin{proof}[proof of Proposition \ref{propbraid}]
		We first prove \eqref{braid0i}.
		Since $s_i.g=g$ ($i\neq 3$) and $s_i.\lambda=\lambda$ ($q^\lambda=(\kappa e_1e_2e_3)^{-1}$), we have $s_0.s_i.A=g\circ mc_\lambda\circ g.(A|_{e_{i}\leftrightarrow e_{i+1}})$ and $s_i.s_0.A=g. (mc_\lambda \circ g.A)|_{e_i\leftrightarrow e_{i+1}}$.
		We put $A_j$ as \eqref{DxAi}.
		We define $G_j$ as \eqref{defG1}, \eqref{defG2} and \eqref{defG3}.
		It follows from calculations in Appendix \ref{appA}, there is a matrix $R$ such that
		\begin{align}\label{conjuGj}
			R^{-1}G_jR=\begin{bmatrix}
				\tilde{G}_j&O\\
				\ast&H_j
			\end{bmatrix},
		\end{align}
		where $\tilde{G}_j$ is a $3\times 3$ matrix and $H_j$ is given by \eqref{H}.
		Using this $\tilde{G}_j$, $s_i.s_0.A$ is given by
		\begin{align}
			(1+x/e_1)(1+x/e_2)(1+x/e_3)\left(I-(1-q)x\sum_{j=1}^3 \frac{\tilde{G}_j}{x+e_j}\right)\bigg|_{e_{i}\leftrightarrow e_{i+1}}.
		\end{align}
		Next we calculate $s_0.s_i.A$.
		We put $\overline{A} =s_i.A$ and define $\overline{A}_j$, $\overline{G}_j$ as
		\begin{align}\label{DxAivar}
		&\sum_{j=1}^3 \frac{\overline{A}_j}{x+s_i.e_j}=\frac{I-\overline{A}}{(1-q)x},
		\\
		&\label{defG1var}\overline{G}_1=\begin{bmatrix}
			q^\lambda \overline{A}_1+[\lambda]I&q^\lambda \overline{A}_2&q^\lambda \overline{A}_3\\O&O&O\\O&O&O
		\end{bmatrix},
		\\
		&\label{defG2var}\overline{G}_2=\begin{bmatrix}
			O&O&O\\
			q^\lambda \overline{A}_1&q^\lambda \overline{A}_2+[\lambda]I&q^\lambda \overline{A}_3\\O&O&O
		\end{bmatrix},
		\\
		&\label{defG3var}\overline{G}_3=\begin{bmatrix}
			O&O&O\\
			O&O&O\\
			q^\lambda \overline{A}_1&q^\lambda \overline{A}_2&q^\lambda \overline{A}_3+[\lambda]I
		\end{bmatrix}.
		\end{align}
		Due to calculations in Appendix \ref{appA}, there is a matrix $S$ such that
		\begin{align}
			S^{-1}\overline{G}_jS=\begin{bmatrix}
				G'_j&O\\\ast&H_j
			\end{bmatrix}.
		\end{align}
		By definition of the matrices $G_j$ and $\overline{G}_j$, we have
		\begin{align}
			\overline{G}_j=\begin{cases}
				G_j|_{e_i\leftrightarrow e_{i+1}}&(i\neq 1,2)\\
				R_{1,2}^{-1}(G_j|_{e_i\leftrightarrow e_{i+1}})R_{1,2}&(i=1)\\
				R_{2,3}^{-1}(G_j|_{e_i\leftrightarrow e_{i+1}})R_{2,3}&(i=2)
			\end{cases},
		\end{align}
		where
		\begin{align}
			R_{1,2}=\begin{bmatrix}
				O&I&O\\I&O&O\\O&O&I
			\end{bmatrix},\ 
			R_{2,3}=\begin{bmatrix}
				I&O&O\\O&O&I\\O&I&O
			\end{bmatrix}.
		\end{align}
		In particular we have
		\begin{align}
			I-(1-q)x\sum_{j=1}^3\frac{\overline{G}_j}{x+s_i.e_j}\simeq \left[I-(1-q)x\sum_{j=1}^3 \frac{G_j}{x+e_j}\right]\bigg|_{e_{i}\leftrightarrow e_{i+1}}.
		\end{align}
		Using the relation \eqref{conjuGj}, we find
		\begin{align}
			I-(1-q)x\sum_{j=1}^3\frac{\begin{bmatrix}
					{G_j'}&O\\
					\ast&H_j
			\end{bmatrix}}{x+s_i.e_j}\simeq \left[I-(1-q)x\sum_{j=1}^3 \frac{\begin{bmatrix}
				\tilde{G_j}&O\\
				\ast&H_j
			\end{bmatrix}}{x+e_j}\right]\bigg|_{e_{i}\leftrightarrow e_{i+1}}.
		\end{align}
		We define
		\begin{align}
			C=\begin{bmatrix}
				C_{11}&C_{12}\\
				C_{21}&C_{22}
			\end{bmatrix}=\begin{bmatrix}
				C_{11}^{3\times 3}&C_{12}^{3\times 6}\\
				C_{21}^{6\times 3}&C_{22}^{6\times 6}
			\end{bmatrix}
		\end{align}
		such that
		\begin{align}
			C\left[I-(1-q)x\sum_{j=1}^3\frac{\begin{bmatrix}
					{G_j'}&O\\
					\ast&H_j
			\end{bmatrix}}{x+s_i.e_j}\right]C^{-1}=\left[I-(1-q)x\sum_{j=1}^3 \frac{\begin{bmatrix}
				\tilde{G_j}&O\\
				\ast&H_j
			\end{bmatrix}}{x+e_j}\right]\bigg|_{e_{i}\leftrightarrow e_{i+1}}.
		\end{align}
		We note that $C$ is independent from $x$.
		From simple calculations, we find that $k$-th column vector $c_k$ of $C_{12}$ satisfies
		\begin{align}\label{ckhk}
			\left[I-(1-q)x\sum_{j=1}^3\frac{\tilde{G}_j}{x+e_j}\right]\bigg|_{e_{i}\leftrightarrow e_{i+1}}c_k=h_k c_k,
		\end{align}
		where
		\begin{align}
			I-(1-q)x\sum_{j=1}^3 \frac{H_j}{x+e_j}=\mathrm{diag}(h_1,h_2,h_3,h_4,h_5,h_6).
		\end{align}
		Due to the expression \eqref{H}, we have
		\begin{align}
			h_j=\begin{cases}
				\dfrac{q^\lambda x+e_j}{x+e_j}& (j=1,2,3)\\
				1&(j=4,5,6)
			\end{cases}.
		\end{align}
		We put $x=-q^{-\lambda }e_j$ ($j=1,2,3$).
		Then we have $h_j=0$.
		On the other hand we have $\displaystyle \det \left[I-(1-q)x\sum_{j=1}^3\frac{\tilde{G}_j}{x+e_j}\right]\bigg|_{e_{i}\leftrightarrow e_{i+1}}\neq 0$ since \eqref{dettildeAapp} and $e_i\neq e_j\ (i\neq j)$.
		Thus we have $c_j=0$.
		We also take $x\to\infty$ in \eqref{ckhk} with $k=4,5,6$.
		It follows from \eqref{tildeGsum} that we have $[\lambda]c_k=c_k$, in particular $c_k=0$ due to the assumption $[\lambda]=(1-(\kappa e_1e_2e_3)^{-1})/(1-q)\neq 1$.
		Therefore we have $C_{12}=O$.
		This leads us to 
		\begin{align}
			I-(1-q)x\sum_{j=1}^3\frac{{G}_j'}{x+s_i.e_j}=C_{11}\left[I-(1-q)\sum_{j=1}^3\frac{\tilde{G}_j}{x+e_j}\right]\bigg|_{e_{i}\leftrightarrow e_{i+1}}C_{11}^{-1}.
		\end{align}
		This completes the proof of \eqref{braid0i}.
		
		Next we prove \eqref{braid00}.
		As an action of parameters, we have
		\begin{align}
			s_0.e_i=\begin{cases}
				e_i&(i=1,2,3)\\
				\kappa e_1e_2e_3 e_i&(i=4,5,6,7,8,9)
			\end{cases},\quad s_0.\kappa=\frac{1}{\kappa (e_1e_2e_3)^2}.
		\end{align}
		Thus we find
		\begin{align}
			s_0.g=g,\quad s_0. \lambda =-\lambda,
		\end{align}
		where $g=(-x/e_1)_\infty(-x/e_2)_\infty(-x/e_3)_\infty$ and $q^\lambda =(\kappa e_1e_2e_3)^{-1}$.
		Therefore we have
		\begin{align}
			s_0.s_0.A=g^{-1}\circ mc_{-\lambda}\circ g\circ g^{-1}\circ mc_\lambda\circ g.A=g^{-1}\circ mc_{-\lambda}\circ mc_\lambda\circ g.A.
		\end{align}
		Using Theorem \ref{thmcompose} (ii), we find
		\begin{align}
			g^{-1}\circ mc_{-\lambda}\circ mc_\lambda\circ g.A\simeq g^{-1}\circ g=\mathrm{id}.
		\end{align}
		This finishes the proof of \eqref{braid00}.

	\end{proof}
	\begin{rem}
		We hope that $s_0.s_3.s_0.A\simeq s_3.s_0.s_3.A$, $s_3.s_3.A\simeq A$ and $s_i.s_3.A\simeq s_3.s_i.A$ ($i\neq 0$) hold by putting the $s_3$-action suitably.
		It follows from direct calculations that
		\begin{align}
			&s_3.s_0.s_3.A=(g'')^{-1}.s_3.(mc_\mu \circ g'.s_3.A),\\
			&s_0.s_3.s_0.A=(g'')^{-1}\circ mc_\nu.s_3(mc_\lambda\circ g.A),
		\end{align}
		where $g'=(-x/e_1)_\infty(-x/e_2)_\infty(-x/e_3)_\infty$, $g''=(-x/e_1)_\infty(-x/e_2)_\infty(-x/\kappa e_1e_2e_3e_4)_\infty$, $q^\mu=(\kappa e_1e_2e_4)^{-1}$ and $q^\nu=e_3/e_4$.
		We remark that $mc_\nu\circ mc_\lambda\simeq mc_\mu$ since $\nu+\lambda=\mu$.
	\end{rem}
	
	\section{Proof of Proposition \ref{propsc}}\label{appC}
	In this appendix we prove Proposition \ref{propsc}
	\begin{proof}[Proof of Proposition \ref{propsc}]
		We define
		\begin{align}
			&A=(I+x X_1)(I+xX_2)(I+xX_3)=I+A^{(1)}x+A^{(2)}x^2+\kappa Ix^3,
		\end{align}
		where $X_1$, $X_2$, $X_3$ are given by \eqref{XXtrig}, \eqref{X2trig} and \eqref{trigdetX2}, i.e.
		\begin{align}
			&X_1=\begin{bmatrix}
				1/e_1&a_1&a_2\\
				0&1/e_2&a_3\\
				0&0&1/e_3
			\end{bmatrix},\ X_3=\begin{bmatrix}
				1/e_7&0&0\\
				a_4&1/e_8&0\\
				a_5&a_6&1/e_9
			\end{bmatrix},
			\\&X_2=\kappa X_1^{-1}X_3^{-1},
			\\&\det(I+X_2)=(1+x/e_4)(1+x/e_5)(1+x/e_3).
		\end{align} 
		We put
		\begin{align}
			&A=\begin{bmatrix}
				b_1&b_2&b_3\\\ast&\ast&\ast\\\ast&\ast&\ast
			\end{bmatrix},\ 
			(T_xA)A=\begin{bmatrix}
				c_1&c_2&c_3\\\ast&\ast&\ast\\\ast&\ast&\ast
			\end{bmatrix},\ 
			(T_x^2A)(T_xA)A=\begin{bmatrix}
				d_1&d_2&d_3\\\ast&\ast&\ast\\\ast&\ast&\ast
			\end{bmatrix}.
		\end{align}
		If $y={}^\mathrm{T}(y_1,y_2,y_3)$ satisfies the equation \eqref{trigeq}, we have
		\begin{align}
			T_x y_1=b_1y_1+b_2y_2+b_3y_3,\ 
			T_x^2 y_1=c_1y_1+c_2y_2+c_3y_3,\ 
			T_x^3 y_1=d_1y_1+d_2y_2+d_3y_3.
		\end{align}
		This leads us to
		\begin{align}
			(p_3 T_x^3+p_2 T_x^2+p_1 T_x+p_0)y_1=0,
		\end{align}
		where
		\begin{align}
			&p_3=\begin{vmatrix}
				b_2&b_3\\c_2&c_3
			\end{vmatrix},\ 
			p_2=-\begin{vmatrix}
				b_2&b_3\\d_2&d_3
			\end{vmatrix},\ 
			p_1=\begin{vmatrix}
				c_2&c_3\\d_2&d_3
			\end{vmatrix},\ 
			p_0=-\begin{vmatrix}
				b_1&b_2&b_3\\c_1&c_2&c_3\\d_1&d_2&d_3
			\end{vmatrix}.
		\end{align}
		It follows from simple calculations that $p_0$ is divisible by $\det A$.
		Since $A=I+O(x)$ at $x=0$ and $A=\kappa I x^3+O(x^2)$ at $x=\infty$, there are polynomials $p_i'$ ($i=0,1,2,3$) such that
		\begin{align}
			&p_i=x^3 p_i',\\
			&\deg p_0'=12,\ \deg p_1'=9,\ \deg p_2'=6,\ \deg p_3'=3.
		\end{align}
		When we apply a gauge transformation $Y=Cy$ where
		\begin{align}
			C=\begin{bmatrix}
				1&0&0\\0&\ast&\ast\\0&\ast&\ast
			\end{bmatrix},
		\end{align}
		the equation satisfied by $Y_1$ is the same as one satisfied by $y_1$.
		By the condition $e_8\neq e_9$, there is a matrix $C\in GL(3)$ in the above form such that
		\begin{align}
			(I+(-e_8)X_3)C=\begin{bmatrix}
				\ast&\ast&0\\\ast&\ast&0\\\ast&\ast&0
			\end{bmatrix}.
		\end{align}
		Hence, we have
		\begin{align}
			&C^{-1}AC|_{x\to-e_8}=\begin{bmatrix}
				\ast&\ast&0\\\ast&\ast&0\\\ast&\ast&0
			\end{bmatrix},\\
			&C^{-1}(T_xA)AC|_{x\to-e_8}=\begin{bmatrix}
				\ast&\ast&0\\\ast&\ast&0\\\ast&\ast&0
			\end{bmatrix},\\
			&C^{-1}(T_x^2A)(T_xA)AC|_{x\to-e_8}=\begin{bmatrix}
				\ast&\ast&0\\\ast&\ast&0\\\ast&\ast&0
			\end{bmatrix}.
		\end{align}
		Therefore we find $p_i'(-e_8)=0$.
		We get $p_i'(-e_9)=0$ in a similar manner.
		We put $p_i''=p_i/(x+e_8)(x+e_9)$, then $p_i'$ ($i=0,1,2,3$) are polynomial in $x$ and
		\begin{align}
			&(p_3''T_x^3+p_2''T_x^2+p_1''T_x+p_0'')y_1=0,\\
			&\deg p_0''=10,\ \deg p_1''=7,\ \deg p_2''=4,\ \deg p_3''=1.
		\end{align}
		We define $f$ as $p_3''(f)=0$.
		We scale $p_3''$ as $x-f$.
		
		We put $x=-e_8/q$.
		Due to the determinant of $A$, $A(-e_8/q)$ is invertible.
		Since
		\begin{align}
			p_3(x)A(q^2x)A(qx)A(x)+p_2(x)A(qx)A(x)+p_1(x)A(x)+p_0(x)I=\begin{bmatrix}
				0&0&0\\\ast&\ast&\ast\\\ast&\ast&\ast
			\end{bmatrix},
		\end{align}
		we have
		\begin{align}
			p_3(-e_8/q)A(-e_8q)A(-e_8)+p_2(-e_8/q)A(-e_8)+p_1(-e_8/q)I+p_0(-e_8/q)A(-e_8/q)^{-1}=\begin{bmatrix}
				0&0&0\\\ast&\ast&\ast\\\ast&\ast&\ast
			\end{bmatrix}.
		\end{align}
		Similar to the above method, we have $p_0(-e_8/q)=0$ and $p_0(-e_9/q)=0$.
		
		We summarize the above calculations.
		We have
		\begin{align}
			&(p_3''T_x^3+p_2''T_x^2+p_1'T_x+p_0)y_1=0,\\
			&\deg p_0''=10,\ \deg p_1''=7,\ \deg p_2''=4,\ \deg p_3''=1,\\
			&p_0''\propto (x+e_1)(x+e_2)(x+e_3)(x+e_4)(x+e_5)(x+e_6)(x+e_7)(qx+e_8)(qx+e_9).
		\end{align}
		We apply the gauge transformation $z=((-x/e_1)_\infty(-x/e_2)_\infty(-x/e_3)_\infty)^{-1}y_1$
		Then we have
		\begin{align}
			\label{scPP}&(P_3T_x^3+P_2T_x^2+P_1T_x+P_0)z=0,\\
			&P_3=(x-f)\prod_{i=1}^3(1+qx/e_i)(1+q^2x/e_i),\\
			&P_2= p_2''\prod_{i=1}^3(1+qx/e_i),\\
			&P_1=p_1'',\\
			&P_0=p_0''\prod_{i=1}^3(1+x/e_i)^{-1}\propto(x+e_4)(x+e_5)(x+e_6)(x+e_7)(qx+e_8)(qx+e_9),\\
			&\deg P_i=7.
		\end{align}
		This clarify that the equation \eqref{scPP} has the following characteristic roots:
		\begin{itemize}
			\item $-e_4$, $-e_5$, $-e_6$, $-e_7$, $-e_8/q$, $-e_9/q$ and another root at $x=0$,
			\item $\{e_i/q,e_i/q^2\}$ (double roots) for $i=1,2,3$ and $f$.
		\end{itemize}
		The other root at $x=0$ is found in later.
		
		Next we derive characteristic roots of \eqref{scPP} at $x=0$ and $x=\infty$.
		We put
		\begin{align}
			&B=A/(1+x/e_1)(1+x/e_2)(1+x/e_3).
		\end{align}
		We have
		\begin{align}
			T_xy'=By',\quad y'=((-x/e_1)_\infty(-x/e_2)_\infty(-x/e_3)_\infty)^{-1}y.
		\end{align}
		We define
		\begin{align}
			B_0=\frac{I-B}{x},\quad B_\infty=x({I-\kappa' B})
		\end{align}
		where $\kappa'=(\kappa e_1e_2e_3)^{-1}$.
		Then we have
		\begin{align}
			D_0y'=B_0y',\quad D_\infty y'=B_\infty.
		\end{align}
		where $D_0=\dfrac{1}{x}(1-T_x)$, $D_\infty=x(1-\kappa' T_x)$.
		From simple calculations we obtain
		\begin{align}
			&D_0^2y'=(D_0 B_0+(T_xB_0)B_0)y',\\
			&D_0^3y'=[D_0(D_0B_0+(T_xB_0)B_0)+T_x(D_0B_0+(T_xB_0)B_0)B_0]y',\\
			&D_\infty^2 y'=(D_\infty B_\infty+(T_xB_\infty)B_0 \kappa' x^2)y',\\
			&D_\infty^3y'=[D_\infty(D_\infty B_\infty+(T_xB_\infty)B_0 \kappa' x^2)+T_x(D_\infty B_\infty+(T_xB_\infty)B_0)B_0 \kappa' x^2]y'
		\end{align}
		Since 
		\begin{align}
			&B_0=B_0^{(0)}+O(x)\ (\mbox{at $x=0$}),\\
			&B_0=B_{0}^{(\infty)}x^{-2}+O(x^{-3})\ (\mbox{at $x=\infty$}),\quad B_\infty=B_\infty^{(\infty)}+O(x^{-1})\ (\mbox{at $x=\infty$}),
		\end{align}
		we have
		\begin{align}
			&D_0y'=O(1)y',\ D_0^2y'=O(1)y',\ D_0^3y'=O(1)y'\quad(\mbox{at $x=0$}),\\
			&D_\infty y'=O(1)y',\ D_\infty^2 y'=O(1)y',\ D_\infty^3 y'=O(1)y'\quad (\mbox{at $x=\infty$}).
		\end{align}
		Therefore the equation \eqref{scPP} can be rewritten as
		\begin{align}
			&(r_3^{(0)}D_0^3+r_2^{(0)}D_0^2+r_1^{(0)}D_0+r_0^{(0)})z=0,\\
			&(r_3^{(\infty)}D_\infty^3+r_2^{(\infty)}D_\infty^2+r_1^{(\infty)}D_\infty+r_0^{(\infty)})z=0,
		\end{align}
		where 
		\begin{align}
			r_i^{(0)}=O(1)\ \mbox{at $x=0$},\ r_i^{(\infty)}=O(1)\ \mbox{at $x=\infty$}.
		\end{align}
		As operators, we find
		\begin{align}
			&D_0=\frac{1}{x}(1-T_x),\ D_0^2=\frac{1}{x^2}(1-T_x)(1-T_x/q),\ D_0^3=\frac{1}{x^3}(1-T_x)(1-T_x/q)(1-T_x/q^2),\\
			&D_\infty=x(1-\kappa'T_x),\ D_\infty^2=x^2(1-\kappa'T_x)(1-\kappa' q T_x),\ D_\infty^3=x^3(1-\kappa'T_x)(1-\kappa' qT_x)(1-\kappa' q^2T_x).
		\end{align}
		This means that the equation \eqref{scPP} has triple roots $\{1,q,q^2\}$ at $x=0$ and triple roots $\{\kappa e_1e_2e_3,\kappa e_1e_2e_3 q^{-1},\kappa e_1e_2e_3 q^{-2}\}$ at $x=\infty$.
		Due to the $q$-Fuchs relation, we obtain that the last root of \eqref{scPP} at $x=0$ is $f/q$.
		This completes the proof that \eqref{scPP} has the point configuration in Proposition \ref{propsc}.
		
		We finally prove the condition \eqref{apparent}.
		Due to the above calculations, we have
		\begin{align}
			P_3(T_x^2B)(T_xB)B+P_2(T_xB)B+P_1B+P_0I=\begin{bmatrix}
				0&0&0\\
				\ast&\ast&\ast\\
				\ast&\ast&\ast
			\end{bmatrix}.
		\end{align}
		Since $P_3(f)=P_0(f/q)=0$, we have
		\begin{align}
			\label{l1}&P_2(f)B(fq)B(f)+P_1(f)B(f)+P_0(f)=\begin{bmatrix}
				0&0&0\\
				\ast&\ast&\ast\\
				\ast&\ast&\ast
			\end{bmatrix},\\
			&P_3(f/q)B(fq)B(f)B(f/q)+P_2(f/q)B(f)B(f/q)+P_1(f/q)B(f/q)=\begin{bmatrix}
				0&0&0\\
				\ast&\ast&\ast\\
				\ast&\ast&\ast
			\end{bmatrix}.
		\end{align}
		From the latter equation we have
		\begin{align}
			\label{l2}&P_3(f/q)B(fq)B(f)+P_2(f/q)B(f)+P_1(f/q)I=\begin{bmatrix}
				0&0&0\\
				\ast&\ast&\ast\\
				\ast&\ast&\ast
			\end{bmatrix}.
		\end{align}
		We focus on the first components of \eqref{l1} and \eqref{l2}.
		The tuples $(P_2(f),P_1(f),P_0(f))$ and $(P_3(f/q),P_2(f/q),P_1(f/q))$ satisfy the same system of linear equations.
		By the form of $B$, the rank of this system is generically $2$ or $3$.
		Hence, $(P_2(f),P_1(f),P_0(f))$ is proportional  to $(P_3(f/q),P_2(f/q),P_1(f/q))$.
	\end{proof}
	\begin{rem}
		The explicit expression of $f$ is given by
		\begin{align}
			f=&\frac{f_1}{f_2},\\
			f_1=&-\kappa ^2 a_1^2 a_3^3 a_6^2 e_1^2 e_3^3 e_8^3 e_9^3 e_2^3-\kappa ^3 a_1 a_2 a_3 a_6 e_1^2 e_3^3 e_8^3 e_9^3 e_2^3+\kappa ^2 a_1 a_2 a_3^2 a_6^2 e_1 e_3^3 e_8^3
			e_9^3 e_2^3
			-\kappa ^3 a_1^2 a_3 e_1^2 e_3^3 e_8^2 e_9^3 e_2^3
			\notag\\
			&
			+\kappa ^2 a_1^2 a_3^2 a_6 e_1 e_3^3 e_8^2 e_9^3 e_2^3-\kappa ^2 a_1^2 a_3^2 a_6 e_1^2 e_3^2 e_8^2 e_9^3
			e_2^3-\kappa ^3 a_1 a_2 e_1^2 e_3^2 e_8^3 e_9^2 e_2^3+2 \kappa ^2 a_1^2 a_3^2 a_6 e_1^2 e_3^2 e_8^3 e_9^2 e_2^3
			\notag\\
			&
			+2 \kappa ^2 a_1 a_2 a_3 a_6 e_1 e_3^2 e_8^3 e_9^2
			e_2^3
			-q \kappa ^2 a_1^2 a_3^2 a_6 e_1^2 e_3^3 e_8^2 e_9^2 e_2^3+\kappa ^2 a_1^2 a_3 e_1 e_3^2 e_8^2 e_9^2 e_2^3-\kappa ^2 a_1^2 a_3 e_1^2 e_3 e_8^2 e_9^2 e_2^3
			\notag\\
			&+q
			\kappa ^2 a_1^2 a_3 e_1^2 e_3^2 e_8 e_9^2 e_2^3
			+\kappa ^2 a_1^2 a_3 e_1^2 e_3 e_8^3 e_9 e_2^3+\kappa ^2 a_1 a_2 e_1 e_3 e_8^3 e_9 e_2^3-q \kappa ^2 a_1^2 a_3 e_1^2
			e_3^2 e_8^2 e_9 e_2^3
			\notag\\
			&
			-2 \kappa ^2 a_1 a_2 a_3^2 a_6^2 e_1^2 e_3^3 e_8^3 e_9^3 e_2^2
			+\kappa ^3 a_2^2 a_6 e_1^2 e_3^3 e_8^3 e_9^3 e_2^2-\kappa ^2 a_2^2 a_3 a_6^2 e_1
			e_3^3 e_8^3 e_9^3 e_2^2+\kappa ^3 a_1 a_2 e_1^2 e_3^3 e_8^2 e_9^3 e_2^2
			\notag\\
			&
			+\kappa ^2 a_1^2 a_3^2 a_6 e_1^2 e_3^3 e_8^2 e_9^3 e_2^2+2 \kappa ^2 a_1 a_2 a_3 a_6 e_1^2
			e_3^2 e_8^2 e_9^3 e_2^2+\kappa ^2 a_1^2 a_3 e_1 e_3^3 e_8 e_9^3 e_2^2-q \kappa  a_1 a_2 e_1 e_3 e_8^2 e_2^2
			\notag\\
			&
			-2 \kappa ^2 a_1 a_2 a_3 a_6 e_1^2 e_3^2 e_8^3 e_9^2
			e_2^2-\kappa ^2 a_2^2 a_6 e_1 e_3^2 e_8^3 e_9^2 e_2^2-q \kappa  a_1^2 a_3 e_1 e_3^2 e_9^2 e_2^2+2 q \kappa ^2 a_1 a_2 a_3 a_6 e_1^2 e_3^3 e_8^2 e_9^2 e_2^2
			\notag\\
			&
			+\kappa ^2
			a_1^2 a_3 e_1^2 e_3^2 e_8^2 e_9^2 e_2^2-\kappa  a_1 a_2 a_3 a_6 e_3^2 e_8^2 e_9^2 e_2^2-2 \kappa  a_1^2 a_3^2 a_6 e_1 e_3^2 e_8^2 e_9^2 e_2^2+\kappa ^2 a_1 a_2 e_1^2
			e_3 e_8^2 e_9^2 e_2^2
			\notag\\
			&
			-\kappa  a_1 a_2 a_3 a_6 e_1 e_3 e_8^2 e_9^2 e_2^2-q \kappa ^2 a_1 a_2 e_1^2 e_3^2 e_8 e_9^2 e_2^2-\kappa  a_1^2 a_3 e_3^2 e_8 e_9^2 e_2^2-q
			\kappa  a_1 a_2 a_3 a_6 e_1 e_3^2 e_8 e_9^2 e_2^2
			\notag\\
			&+\kappa  a_1^2 a_3 e_1 e_3 e_8 e_9^2 e_2^2
			+q \kappa ^2 a_1 a_2 e_1^2 e_3^2 e_8^2 e_9 e_2^2-q \kappa  a_1 a_2 a_3 a_6
			e_1 e_3^2 e_8^2 e_9 e_2^2-\kappa  a_1 a_2 e_1 e_8^2 e_9 e_2^2
			\notag\\
			&-\kappa  a_1 a_2 e_3 e_8^2 e_9 e_2^2
			-2 \kappa  a_1^2 a_3 e_1 e_3 e_8^2 e_9 e_2^2+q \kappa  a_1^2 a_3 e_1
			e_3^2 e_8 e_9 e_2^2+q \kappa  a_1 a_2 e_1 e_3 e_8 e_9 e_2^2
			\notag\\
			&
			+\kappa ^2 a_2^2 a_3 a_6^2 e_1^2 e_3^3 e_8^3 e_9^3 e_2-2 \kappa ^2 a_1 a_2 a_3 a_6 e_1^2 e_3^3 e_8^2 e_9^3
			e_2-\kappa ^2 a_2^2 a_6 e_1 e_3^3 e_8^2 e_9^3 e_2-\kappa ^2 a_2^2 a_6 e_1^2 e_3^2 e_8^2 e_9^3 e_2
			\notag\\
			&
			-\kappa ^2 a_1 a_2 e_1 e_3^3 e_8 e_9^3 e_2+q \kappa  a_1 a_2 e_1
			e_3^2 e_9^2 e_2-q \kappa ^2 a_2^2 a_6 e_1^2 e_3^3 e_8^2 e_9^2 e_2-\kappa ^2 a_1 a_2 e_1^2 e_3^2 e_8^2 e_9^2 e_2
			\notag\\
			&
			+\kappa  a_2^2 a_6 e_3^2 e_8^2 e_9^2 e_2+3 \kappa  a_1
			a_2 a_3 a_6 e_1 e_3^2 e_8^2 e_9^2 e_2+\kappa  a_2^2 a_6 e_1 e_3 e_8^2 e_9^2 e_2+\kappa  a_1 a_2 e_3^2 e_8 e_9^2 e_2
			\notag\\
			&
			-\kappa  a_1^2 a_3 e_1 e_3^2 e_8 e_9^2 e_2+q \kappa
			a_2^2 a_6 e_1 e_3^2 e_8 e_9^2 e_2-\kappa  a_1 a_2 e_1 e_3 e_8 e_9^2 e_2+q a_1 a_2 e_3 e_8 e_2
			+q \kappa  a_2^2 a_6 e_1 e_3^2 e_8^2 e_9 e_2
			\notag\\
			&
			+\kappa  a_1 a_2 e_1 e_3
			e_8^2 e_9 e_2-q a_1 a_2 e_3 e_9 e_2-q \kappa  a_1 a_2 e_1 e_3^2 e_8 e_9 e_2+a_1 a_2 e_8 e_9 e_2
			+a_1^2 a_3 e_3 e_8 e_9 e_2
			\notag\\
			&
			-q a_2^2 a_6 e_3 e_8 e_9 e_2+\kappa ^2 a_2^2
			a_6 e_1^2 e_3^3 e_8^2 e_9^3-\kappa  a_2^2 a_6 e_1 e_3^2 e_8^2 e_9^2+\kappa  a_1 a_2 e_1 e_3^2 e_8 e_9^2-a_1 a_2 e_3 e_8 e_9,\\
			f_2&=\kappa ^3 a_1^2 a_3^2 a_6 e_1^2
			e_3^3 e_8^3 e_9^3 e_2^3-\kappa ^3 a_1^2 a_3 e_1^2 e_3^2 e_8^2 e_9^3 e_2^3+\kappa ^3 a_1^2 a_3 e_1^2 e_3^2 e_8^3 e_9^2 e_2^3-q \kappa ^3 a_1^2 a_3 e_1^2 e_3^3 e_8^2
			e_9^2 e_2^3
			\notag\\
			&
			-q \kappa ^3 a_1 a_2 e_1^2 e_3^2 e_8^2 e_9^2 e_2^3+q \kappa ^2 a_1 a_2 a_3 a_6 e_1 e_3^2 e_8^2 e_9^2 e_2^3+q \kappa ^2 a_1^2 a_3 e_1 e_3^2 e_8 e_9^2
			e_2^3+q \kappa ^2 a_1 a_2 e_1 e_3 e_8^2 e_9 e_2^3
			\notag\\
			&
			-2 \kappa ^3 a_1 a_2 a_3 a_6 e_1^2 e_3^3 e_8^3 e_9^3 e_2^2+\kappa ^3 a_1 a_2 e_1^2 e_3^2 e_8^2 e_9^3 e_2^2+\kappa ^2
			a_1 a_2 a_3 a_6 e_1 e_3^2 e_8^2 e_9^3 e_2^2+\kappa ^2 a_1^2 a_3 e_1 e_3^2 e_8 e_9^3 e_2^2
			\notag\\
			&
			-q \kappa  a_1 a_2 e_1 e_8^2 e_2^2-\kappa ^3 a_1 a_2 e_1^2 e_3^2 e_8^3 e_9^2
			e_2^2+\kappa ^2 a_1 a_2 a_3 a_6 e_1 e_3^2 e_8^3 e_9^2 e_2^2-q \kappa  a_1^2 a_3 e_3^2 e_9^2 e_2^2
			\notag\\
			&
			+q \kappa ^3 a_1 a_2 e_1^2 e_3^3 e_8^2 e_9^2 e_2^2+q \kappa ^2 a_1
			a_2 a_3 a_6 e_1 e_3^3 e_8^2 e_9^2 e_2^2-q \kappa  a_2^2 a_3 a_6^2 e_3^2 e_8^2 e_9^2 e_2^2+q \kappa ^2 a_1 a_2 a_3 a_6 e_1^2 e_3^2 e_8^2 e_9^2 e_2^2
			\notag\\
			&
			-q \kappa  a_1 a_2
			a_3^2 a_6^2 e_1 e_3^2 e_8^2 e_9^2 e_2^2-\kappa ^2 a_1^2 a_3 e_1 e_3^2 e_8^2 e_9^2 e_2^2+q \kappa ^2 a_2^2 a_6 e_1 e_3^2 e_8^2 e_9^2 e_2^2-\kappa ^2 a_1 a_2 e_1 e_3
			e_8^2 e_9^2 e_2^2
			\notag\\
			&
			+q \kappa ^2 a_1^2 a_3 e_1 e_3^3 e_8 e_9^2 e_2^2+q \kappa ^2 a_1^2 a_3 e_1^2 e_3^2 e_8 e_9^2 e_2^2-2 q \kappa  a_1 a_2 a_3 a_6 e_3^2 e_8 e_9^2
			e_2^2+q \kappa ^2 a_1 a_2 e_1 e_3^2 e_8 e_9^2 e_2^2
			\notag\\
			&
			-q \kappa  a_1^2 a_3^2 a_6 e_1 e_3^2 e_8 e_9^2 e_2^2+\kappa ^2 a_1 a_2 e_1 e_3 e_8^3 e_9 e_2^2-q \kappa ^2 a_1 a_2
			e_1 e_3^2 e_8^2 e_9 e_2^2+q \kappa ^2 a_1 a_2 e_1^2 e_3 e_8^2 e_9 e_2^2
			\notag\\
			&
			-q \kappa  a_2^2 a_6 e_3 e_8^2 e_9 e_2^2-2 q \kappa  a_1 a_2 a_3 a_6 e_1 e_3 e_8^2 e_9 e_2^2-q
			\kappa  a_1 a_2 e_3 e_8 e_9 e_2^2-q \kappa  a_1^2 a_3 e_1 e_3 e_8 e_9 e_2^2
			\notag\\
			&
			+\kappa ^3 a_2^2 a_6 e_1^2 e_3^3 e_8^3 e_9^3 e_2-\kappa ^2 a_2^2 a_6 e_1 e_3^2 e_8^2 e_9^3
			e_2-\kappa ^2 a_1 a_2 e_1 e_3^2 e_8 e_9^3 e_2-\kappa ^2 a_2^2 a_6 e_1 e_3^2 e_8^3 e_9^2 e_2
			\notag\\
			&
			-q \kappa  a_1^2 a_3 e_1 e_3^2 e_9^2 e_2-q \kappa ^2 a_2^2 a_6 e_1 e_3^3
			e_8^2 e_9^2 e_2-q \kappa ^2 a_2^2 a_6 e_1^2 e_3^2 e_8^2 e_9^2 e_2+q \kappa  a_2^2 a_3 a_6^2 e_1 e_3^2 e_8^2 e_9^2 e_2
			\notag\\
			&
			+\kappa ^2 a_1 a_2 e_1 e_3^2 e_8^2 e_9^2
			e_2+\kappa  a_2^2 a_6 e_3 e_8^2 e_9^2 e_2-q \kappa ^2 a_1 a_2 e_1 e_3^3 e_8 e_9^2 e_2-q \kappa ^2 a_1 a_2 e_1^2 e_3^2 e_8 e_9^2 e_2
			\notag\\
			&
			+\kappa  a_1 a_2 e_3 e_8 e_9^2
			e_2+q a_1 a_2 e_8 e_2+q \kappa  a_2^2 a_6 e_3^2 e_8^2 e_9 e_2-\kappa  a_1 a_2 e_3 e_8^2 e_9 e_2
			+q \kappa  a_2^2 a_6 e_1 e_3 e_8^2 e_9 e_2
			\notag\\
			&+q a_1^2 a_3 e_3 e_9 e_2+q
			\kappa  a_1 a_2 e_3^2 e_8 e_9 e_2+q a_1 a_2 a_3 a_6 e_3 e_8 e_9 e_2+q \kappa  a_1 a_2 e_1 e_3^2 e_9^2+
			q \kappa  a_2^2 a_6 e_1 e_3^2 e_8 e_9^2
			\notag\\
			&
			-q a_1 a_2 e_3 e_9-q
			a_2^2 a_6 e_3 e_8 e_9
		\end{align}
		This can be checked by the direct calculation (on some software).
	\end{rem}
	\section*{Acknowledgement}
	The author would like to thank Yasuhiko Yamada for fruitful discussions and constant encouragements.
	The author is also grateful to Shunya Adachi for continuous discussions on the ($q$-)middle convolution, especially the method to introduce $mc_\lambda$ via the $q$-Okubo type equation has been constructed in private communications with him.

\end{document}